\newcommand{\footremember}[2]{%
    \footnote{#2}
    \newcounter{#1}
    \setcounter{#1}{\value{footnote}}%
}
\newcommand{\footrecall}[1]{%
    \footnotemark[\value{#1}]%
}
\newcommand{\innp}[1]{\left\langle #1 \right\rangle}
\newcommand{\norm}[1]{\left\| #1 \right\|}
\newcommand{\mA}{\mathbf{A}}
\newcommand{\zeros}{\textbf{0}}
\newcommand{\vx}{\mathbf{x}}
\newcommand{\tF}{\widetilde{F}}
\newcommand{\tG}{\widetilde{G}}
\newcommand{\cu}{\mathcal{U}}
\newcommand{\cf}{\mathcal{F}}
\newcommand{\cc}{\mathcal{C}}
\newcommand{\ce}{\mathcal{E}}
\newcommand{\co}{\mathcal{O}}
\newcommand{\cv}{\mathcal{V}}
\newcommand{\vy}{\mathbf{y}}
\newcommand{\vv}{\mathbf{v}}
\newcommand{\vb}{\mathbf{b}}
\newcommand{\vu}{\mathbf{u}}
\newcommand{\rr}{\mathbb{R}}
\newcommand{\ee}{\mathbb{E}}
\newcommand{\nn}{\mathbb{N}}
\newcommand{\um}{\vu^{*}}
\newcommand{\ui}{\vu_{0}}
\newcommand{\hF}{\widehat{F}}
\newcommand{\grad}{G_{\eta}}
\newcommand{\proj}{\Pi_{\cu}}
\def\mathcolor#1#{\@mathcolor{#1}}
\def\@mathcolor#1#2#3{%
  \protect\leavevmode
  \begingroup
    \color#1{#2}#3%
  \endgroup
}
\newcommand*{\vsepfbox}[1]{%
  \begingroup
    \sbox0{\fbox{#1}}%
    \setlength{\fboxrule}{0pt}%
    \mbox{\kern-\fboxsep\fbox{\unhbox0}\kern-\fboxsep}%
  \endgroup
}
\theoremstyle{plain} \numberwithin{equation}{section}
\newtheorem{theorem}{Theorem}[section]
\numberwithin{theorem}{section}
\newtheorem{corollary}[theorem]{Corollary}
\newtheorem{lemma}[theorem]{Lemma}
\newtheorem{proposition}[theorem]{Proposition}
\theoremstyle{definition}
\newtheorem{remark}[theorem]{Remark}
\newtheorem{assumption}{Assumption}
\newcommand{\cb}[1]{{\color{blue}{\textbf{CB:} #1}}}
\newcommand{\jd}[1]{{\color{purple}{\textbf{JD:} #1}}}
\newcommand{\xc}[1]{{\color{orange}{[\textbf{XC:} #1}]}}
\title{Stochastic Halpern Iteration with Variance Reduction \\
                    for Stochastic Monotone Inclusions}
\author{Xufeng Cai\footremember{wisc}{Department of Computer Sciences, University of Wisconsin-Madison. XC (\href{mailto:xcai74@wisc.edu}{xcai74@wisc.edu}), CS (\href{mailto:chaobing.song@wisc.edu}{chaobing.song@wisc.edu}), JD (\href{mailto:jelena@cs.wisc.edu}{jelena@cs.wisc.edu}).}
\and Chaobing Song\footrecall{wisc}
\and Cristóbal Guzmán\footremember{twente}{Institute for Mathematical and Computational Eng., Facultad de Matem\'aticas and Escuela de Ingenier\'ia, Pontificia Universidad Católica de Chile. \href{mailto:crguzmanp@mat.uc.cl}{crguzmanp@mat.uc.cl}.}
\and Jelena Diakonikolas\footrecall{wisc}
}
\date{}
\begin{document}

\maketitle
\begin{abstract}%
We study stochastic monotone inclusion problems, which widely appear in machine learning applications, including robust regression and adversarial learning. We propose novel variants of stochastic Halpern iteration with recursive variance reduction. In the cocoercive---and more generally Lipschitz-monotone---setup, our algorithm attains $\epsilon$ norm of the operator with $\mathcal{O}(\frac{1}{\epsilon^3})$ stochastic operator evaluations, which significantly improves over state of the art $\mathcal{O}(\frac{1}{\epsilon^4})$ stochastic operator evaluations required for existing monotone inclusion solvers applied to the same problem classes. 
We further show how to couple one of the proposed variants of stochastic Halpern iteration with a scheduled restart scheme to solve stochastic monotone inclusion problems with ${\mathcal{O}}(\frac{\log(1/\epsilon)}{\epsilon^2})$ stochastic operator evaluations under additional sharpness or strong monotonicity assumptions.
\end{abstract}


\section{Introduction}\label{sec:intro}
Recent trends in machine learning (ML) involve the study of models whose solutions do not reduce to optimization but rather to {\em equilibrium conditions}. Standard examples include generative adversarial networks, adversarially robust training of ML models, and training of ML models under notions of fairness.  It turns out that several of these equilibrium conditions (including, but not limited to, first-order stationary points, saddle-points, and Nash equilibria of minimax games) can be cast as solutions to a {\em monotone inclusion} problem, which is defined as the problem of computing 
a zero of a (maximal) monotone operator $F: \rr^d\to \rr^d$ (see \eqref{def:MI} for a formal definition). In the context of min-max optimization problems, monotone inclusion reduces to a stationarity condition, which for unconstrained problems boils down to finding a point with small gradient norm.  
%
    
Of particular interest to machine learning are stochastic versions of these problems, in which the operator $F$ is not readily available, but can only be accessed through a stochastic oracle $\hF$. Such are the settings mentioned above, where the definitions of equilibria involve expectations over  continuous high-dimensional spaces. The corresponding  problem, known as the {\em stochastic monotone inclusion}, has not been thoroughly studied, particularly in the context of its stochastic oracle complexity. Understanding stochastic oracle complexity of monotone inclusion in all standard settings with Lipschitz operators, from the algorithmic aspect, is the main motivation of this work. 
    
    
    
    \subsection{Contributions}
    We study three main classes of stochastic monotone inclusion problems with Lipschitz operators, defined by the assumptions made about the operator itself: (i) cocoercive class, which is the most restricted class, but nevertheless fundamental for understanding monotone inclusion, as it relates to the problem of finding a fixed point of a nonexpansive (1-Lipschitz) operator; (ii) Lipschitz monotone class, which is perhaps the most basic class arising in the study of smooth convex-concave min-max optimization problems; and (iii) Lipschitz monotone class with an additional sharpness property of the operator. Sharpness is a widely studied property of optimization problems, often referred to as the ``local error bound'' condition, which is weaker than strong convexity and roughly corresponds to  the problem landscape being curved outside of the solution set (see~\citet{pang1997error} for a survey of classical results).  
    
    From an algorithmic standpoint, we consider variants of classical Halpern iteration~\citep{halpern1967fixed}, which was originally introduced for solving fixed point equations with nonexpansive operators. Variants of this iteration have recently been shown to lead to (near-)optimal  first-order oracle complexity for all aforementioned standard problem classes in \emph{deterministic} settings~\citep{diakonikolas2020halpern,diakonikolas2021potential,Yoon2021OptimalGradientNorm}. 
    However, to the best of our knowledge, stochastic variants of these methods have received very limited attention prior to our work. The only results we are aware of are for a two-step extragradient-like variant of Halpern iteration in negative comonotone Lipschitz settings~\citep{lee2021fast} and which show that when variance of operator estimates is bounded by order-$\frac{\epsilon^2}{k}$ in iteration $k,$ the method attains operator norm $\epsilon$ after $\mathcal{O}(\frac{1}{\epsilon})$ iterations. However, \citet{lee2021fast} does not discuss how such variance control would be obtained. Simple mini-batching, as we show, only leads to $\mathcal{O}(\frac{1}{\epsilon^4})$ stochastic oracle complexity.
    
    We show that existing variants of the Halpern iteration~\citep{diakonikolas2020halpern,tran2021halpern} can be effectively combined with recursive variance reduction~\citep{li2020page} to obtain $\co(\frac{1}{\epsilon^3})$ stochastic oracle complexity in the cocoercive and Lipschitz monotone setups. We then show that the complexity can be further reduced to $\co\big(\frac{1}{\epsilon^2}\log(\frac{1}{\epsilon})\big)$ under an additional sharpness assumption about the operator. 
    The last bound is unimprovable in terms of the dependence on $\epsilon,$ due to existing lower bounds, as we argue for completeness in Section~\ref{appx:lower-bound-reductions}.
    
    To the best of our knowledge, our work is the first to use variance reduction to reduce stochastic oracle complexity of monotone inclusion (small gradient norm in min-max optimization settings), and the attained bounds are the best achieved to date for direct methods.

    \subsection{Techniques}
    
    Inspired by the potential function originally used by~\citet{diakonikolas2020halpern} and later used either in the same or slightly modified form  by~\citet{diakonikolas2021potential,Yoon2021OptimalGradientNorm,tran2021halpern,lee2021fast}, we adapt this potential function-based argument to account for stochastic error terms arising due to the stochastic oracle access to the operator. We first show that in the cocoercive minibatch setting, this argument only leads to $\co(\frac{1}{\epsilon^4})$ stochastic oracle complexity, and it is unclear how to improve it directly, as the analysis appears tight. We then combine the cocoercive variant of Halpern iteration~\citep{diakonikolas2020halpern} with the PAGE estimator~\citep{li2020page} to reduce the stochastic oracle complexity to $\co(\frac{1}{\epsilon^3})$. The same variance reduced estimator is also used in conjunction with the two-step extrapolated variant of Halpern iteration introduced by~\cite{tran2021halpern}, as a direct application of Halpern iteration is not known to converge on the class of Lipschitz monotone operators. 
    
    While the basic ideas in our arguments are simple, their realization requires addressing major technical obstacles. First, the variance reduced estimator that we use \citep{li2020page} was originally devised for smooth nonconvex optimization problems, where it was coupled with a stochastic variant of gradient descent. This is significant, because the proof relies on a descent lemma, which allows cancelling the error arising from the variance of the estimator by the ``descent'' part. Such an argument is not possible in our setting, as there is no objective function to descend on. Instead, our analysis relies on an intricate inductive argument that ensures that the expected norm of the operator is bounded in each iteration, assuming a suitable bound on the variance of the estimator. To obtain our desired result for the variance, we propose a data-dependent batch allocation in PAGE estimator~\citep{li2020page} (see Corollary \ref{cor:variance-bound}), which scales proportionally to the squared distance between successive iterates, similar to \citet{arjevani2020second}. We inductively argue that the squared distance between successive iterates arising in the batch size of the estimator reduces at rate $\frac{1}{k^2}$ \emph{in expectation}. 
    This allows us to further certify that the estimators do not only remain accurate, but their variance decreases as ${\cal O}(\epsilon^2/k)$, where $k$ is the iteration count.
    
    In the context of the potential function argument, unlike in the deterministic settings, we \emph{do not} establish that the potential function is non-increasing, even in expectation. The stochastic error terms that arise due to the stochastic nature of the operator evaluations are controlled by taking slightly smaller step sizes than in the vanilla  methods from~\citet{diakonikolas2020halpern,tran2021halpern}, which allows us to ``leak'' negative quadratic terms that are further used in controlling the stochastic error. The argument for controlling the value of the potential function is itself coupled with the inductive argument for ensuring that the expected operator norm remains bounded. 
    
    Finally, while applying a restarting strategy is standard under sharpness conditions~\citep{roulet2020sharpness}, obtaining the claimed stochastic oracle complexity result of $\co\big(\frac{1}{\epsilon^2}\log(\frac{1}{\epsilon})\big)$ requires a rather technical argument to bound the total number of stochastic queries to the operator. 

    \subsection{Related work}
    






\paragraph{Monotone inclusion and variational inequalities.}

Variational inequality problems were originally devised to deal with approximating equilibria. 
Their systematic study was initiated by~\citet{stampacchia1964formes}. The relationship between variational inequalities and min-max optimization was observed soon after~\citet{rockafellar1970monotone}, while one of the earliest papers to study solving monotone inclusion as a generalization of variational inequalities, convex and min-max optimization, and complementarity problems is~\cite{rockafellar1976monotone}. For a historical overview of this area and an extensive review of classical results, see~\citet{facchinei2007finite}. 

In the case of monotone operators, standard variants of variational inequality problems (see Section~\ref{sec:prelim}) and monotone inclusion are equivalent---their solution sets coincide. This is a consequence of the celebrated Minty Theorem~\citep{minty1962monotone}. However, there is a major difference between these problems when it comes to solving them to a finite accuracy. In particular, on unbounded domains, approximating variational inequalities is meaningless, whereas monotone inclusion remains well-defined. This is most readily seen from the observation that mapping from min-max optimization, variational inequalities correspond to primal-dual gap guarantees, while monotone inclusion corresponds to a guarantee in gradient norm. For a simple bilinear function $f(x, y) = xy$ which has the unique min-max solution at $(x, y) = (0, 0)$, the primal-dual gap is infinite for any point other than $(0, 0),$ while the gradient remains finite and is a good proxy for measuring quality of a  solution. Further, even on bounded domains or using restricted gap functions on unbounded domains as in e.g.,~\cite{nesterov2007dual}, optimal oracle complexity guarantees for approximate monotone inclusion imply optimal complexity guarantees for approximately satisfied variational inequalities (see, e.g.,~\cite{diakonikolas2020halpern}). The opposite does not hold in general. In particular, in deterministic settings, standard algorithms such as the celebrated extragradient~\citep{korpelevich1977extragradient,nemirovski2004prox}, dual extrapolation~\citep{nesterov2007dual}, or Popov's method~\citep{Popov1980} that have the optimal oracle complexity $O(\frac{1}{\epsilon})$ for approximating variational inequalities are suboptimal for monotone inclusion and attain oracle complexity of the order $O(\frac{1}{\epsilon^2})$~\citep{golowich2020last,diakonikolas2021potential}. 

\paragraph{Halpern iteration.} Halpern iteration is a classical fixed point iteration originally introduced by \citet{halpern1967fixed}, and studied extensively in terms of both its asymptotic and non-asymptotic convergence guarantees~~\citep{wittmann1992approximation,leustean2007rates,lieder2019convergence, kohlenbach2011quantitative,kohlenbach2012effective,cheval2022modified}.. The first tight nonasymptotic convergence rate guarantee of $1/t$ was obtained in~\citet{lieder2019convergence,Sabach:2017}. This rate was also matched by an alternative method proposed by~\citet{kim2019accelerated}. 

The usefulness of Halpern iteration for solving monotone inclusion problems was first observed by~\citet{diakonikolas2020halpern},\footnote{Interestingly, the algorithm proposed by~\citet{kim2019accelerated} for cocoercive inclusion coincides with the Halpern iteration for a related nonexpansive operator (see~\citet[Proposition 4.3]{Contreras:2021}).} who showed that its variants can be used to obtain near-optimal oracle complexity results for all standard classes of monotone inclusion problems with Lipschitz operators also studied in this work. The near-tightness (up to poly-logarithmic factors) of the results from~\citet{diakonikolas2020halpern} was certified using lower bound reductions from min-max optimization lower bounds introduced by~\citet{Ouyang2019}. These lower bounds were made tight for the cocoercive setup in~\citet{diakonikolas2021potential}.

The generalization of Halpern iteration from the cocoercive to Lipschitz monotone setup in~\cite{diakonikolas2020halpern} utilized approximating what is known as the resolvent operator, which led to a double-loop algorithm and an additional $\log(1/\epsilon)$ in the resulting complexity. This log factor was shaved off in~\citet{Yoon2021OptimalGradientNorm}, who introduced a two-step variant of Halpern iteration, inspired by the extragradient method of~\citet{korpelevich1977extragradient}. 
The results of \citet{diakonikolas2020halpern,Yoon2021OptimalGradientNorm} were further extended to other classes of Lipschitz operators by~\citet{tran2021halpern,lee2021fast}. 
Except for \cite{lee2021fast} which considered controlled variance as discussed above, all of the existing results only targeted deterministic settings. 

\paragraph{Stochastic settings and variance reduction.} Vanilla stochastic gradient methods have constant variance of stochastic gradients, which creates a bottleneck in the convergence rate. To improve the convergence rate, in the past decade, powerful variance reduction techniques have been proposed. 

For strongly convex finite-sum problems, SAG~\citep{schmidt2017minimizing}, which used a biased stochastic estimator of the full gradient, was the first stochastic gradient method with a linear convergence rate. \citet{johnson2013accelerating} and \citet{defazio2014saga} improved \citet{schmidt2017minimizing} by proposing unbiased estimators of SVRG-type and SAGA-type, respectively. Such unbiased estimators were further combined with Nesterov acceleration~\citep{allen2017katyusha, song2020variance}, or applied to nonconvex finite-sum/infinite-sum problems \citep{reddi2016stochastic,lei2017non}. 
For nonconvex stochastic (infinite-sum) problems, SARAH \citep{nguyen2017sarah} and SPIDER \citep{fang2018spider,zhou2018finding,zhou2018stochastic} estimators were proposed to attain the optimal oracle complexity of $\co(1/\epsilon^3)$ for finding an {$\epsilon$-approximate} stationary point. Both estimators are referred to as ``recursive'' variance reduction estimators, as they are biased when taking expectation w.r.t.~current randomness but unbiased w.r.t.~all the randomness in history. PAGE \citep{li2020page} and STORM \citep{cutkosky2019momentum} significantly simplified SARAH and SPIDER in terms of reducing the number of loops 
and avoiding large minibatches, respectively. \citet{arjevani2020second} further extended this line of work by incorporating second-order information and dynamic batch sizes.

In the setting of min-max optimization and variational inequalities/monotone inclusion, variance reduction has primarily been used for approximating variational inequalities, corresponding to the primal-dual gap in min-max optimization; see, for example~\citet{palaniappan2016stochastic,alacaoglu2021stochastic,iusem2017extragradient,chavdarova2019reducing,carmon2019variance,loizou2021stochastic}. Under strong monotonicity (or sharpness in the case of~\citet{loizou2021stochastic}), such results generalize to monotone inclusion; however, to the best of our knowledge, there have been no results that address monotone inclusion under the weaker assumptions considered in this work. In the context of monotone inclusion with Lipschitz operators, the tightest complexity result that we are aware of is $\co(\frac{1}{\epsilon^4}),$ due to~\citet{diakonikolas2021efficient}, and it applies to a more general class of structured non-monotone Lipschitz operators, for the best iterate. 
The same oracle complexity can be deduced for the last iterate of a two-step variant of Halpern from \citet[Theorem 6.1]{lee2021fast}, using mini-batching. All the results in our work are also for the last iterate. 




\section{Preliminaries}\label{sec:prelim}

We consider a real $d$-dimensional normed space $(\rr^d, \norm{\cdot})$, where $\norm{\cdot}$ is induced by an inner product associated with the space, i.e., $\norm{\cdot} = \sqrt{\innp{\cdot, \cdot}}$. Let $\cu \subseteq \rr^d$ be closed and convex; in the unconstrained case, $\cu \equiv \rr^d$. When $\cu$ is bounded,  $D = \max_{\vu, \vv \in \cu}\|\vu - \vv\|$  denotes its diameter. 
%

\paragraph{Classes of monotone operators.} 
We say that an operator $F: \rr^d \rightarrow \rr^d$ is
\begin{enumerate}[wide=0pt, leftmargin=\parindent]
    \item monotone, if $\forall \vu, \vv \in \rr^d,$
%
$    \innp{F(\vu)- F(\vv), \vu - \vv} \geq 0. $
%
\item $L$-Lipschitz continuous for some $L > 0$, if $\forall \vu, \vv \in \rr^d,$
%
  $  \|F(\vu) - F(\vv)\| \leq L \norm{\vu - \vv}.$
%
\item $\gamma$-cocoercive for some $\gamma>0$, if $\forall \vu, \vv \in \rr^d$, 
%
   $ \innp{F(\vu) - F(\vv), \vu - \vv} \geq \gamma\norm{F(\vu) - F(\vv)}^2.$
%
\item $\mu$-strongly monotone 
for some $\mu>0$, if $\forall \vu, \vv \in \rr^d$, 
 $   \innp{F(\vu) - F(\vv), \vu - \vv} \geq \mu\norm{\vu - \vv}^2.$
\end{enumerate}
Note that we can easily specialize these definitions to the set $\cu$ by restricting  $\vu, \vv$ to be from $\cu$.

Throughout the paper, the minimum assumption that we make about an operator $F$ is that it is monotone and Lipschitz. 
%
%
%
Observe that any $\gamma$-cocoercive operator is monotone and $\frac{1}{\gamma}$-Lipschitz. The converse to this statement does not hold in general. 

%
%

\paragraph{Monotone inclusion and variational inequalities.} 
Monotone inclusion asks for $\vu^*$ such that
\begin{equation}\tag{MI} \label{def:MI}
\mathbf{0} \in F(\mathbf{u}^*)+\partial I_{\mathcal{U}}(\mathbf{u}^*),
\end{equation}
where $I_{\mathcal{U}}$ is the indicator function of the set $\cu$ and $\partial I_{\cu}(\cdot)$ denotes the subdifferential of $I_{\cu}$. 

If $F$ is continuous and monotone, the solution set to \eqref{def:MI} is the same as the solution set of the Stampacchia Variational Inequality (SVI) problem, which asks for $\vu^* \in \cu$ such that 
\begin{equation}\tag{SVI}\label{def:SVI}
(\forall \mathbf{u} \in \mathcal{U}): \quad\left\langle F\left(\mathbf{u}^{*}\right), \mathbf{u}-\mathbf{u}^{*}\right\rangle \geq 0.
\end{equation}
Further, when $F$ is monotone, the solution set of (\ref{def:SVI}) is equivalent to the solution set of the Minty Variational Inequality (MVI) problem consisting in finding $\vu^*$ such that 
\begin{equation}\tag{MVI}\label{def:MVI}
(\forall \mathbf{u} \in \mathcal{U}): \quad\left\langle F(\mathbf{u}), \mathbf{u}^{*}-\mathbf{u}\right\rangle \leq 0.
\end{equation}

We assume throughout the paper that a solution to monotone inclusion (\ref{def:MI}) exists, which implies that solutions to both (\ref{def:SVI}) and (\ref{def:MVI}) exist as well. Existence of solutions follows from standard results and is guaranteed whenever e.g., $\cu$ is compact, or, if there exists a compact set $\cu^{\prime}$ such that $\mathrm{Id} - \frac{1}{L}F$ maps $\cu^{\prime}$ to itself, where $\mathrm{Id}$ is the identity  map~\citep{facchinei2007finite}. As remarked in the introduction, in unbounded setups it is generally not possible to approximate \eqref{def:MVI} and \eqref{def:SVI}, 
whereas approximating \eqref{def:MI} is quite natural: we only need to find $\mathbf{u}$ such that $\zeros \in F(\mathbf{u}) + \partial I_{\cu}(\mathbf{u}) + \mathcal{B}(\epsilon)$, where $\zeros$ denotes the zero vector and $\mathcal{B}(\epsilon)$ denotes the centered ball of radius $\epsilon$.  

\paragraph{Stochastic access to the operator.} 

We consider the stochastic setting for monotone inclusion problems. More specifically, we make the following assumptions for stochastic queries to $F.$ These assumptions are made throughout the paper, without being explicitly invoked. 
\begin{assumption}[Unbiased samples with bounded variance]\label{assmpt:stoch-queries}
For each query point $\vx \in \cu$, we observe $\hF(\vx, z)$ where $z \sim P_z$ is a random variable that satisfies the following assumptions: 
\begin{equation}\notag
\ee_z\big[\hF(\vx, z)\big] = F(\vx) \quad \text{ and }\quad 
    \ee_z\big[\big\|{\hF(\vx, z) - F(\vx)}\big\|^2\big] \leq \sigma^2.
\end{equation}
\end{assumption}

\begin{assumption}[Multi-point oracle]\label{assmpt:multiQuery}
We can query a set of points $(\vx_1, \dots, \vx_n)$ and receive 
\begin{equation*}
    \hF(\vx_1, z), \dots, \hF(\vx_n, z) \quad \text{where} \quad z \sim P_z. 
\end{equation*}
\end{assumption}

\begin{assumption}[Lipschitz in expectation]\label{assmpt:Lipschitz}
$\ee_{z}\big[\big\|{\hF(\vu, z) - \hF(\vv, z)}\big\|^2\big] \leq L^2 \norm{\vu - \vv}^2$, $\forall \vu, \vv \in \cu$. 
\end{assumption}

{We note that complexity results of the paper will bound the total number of queries made to this oracle. In particular, if multiple query points and/or multiple samples $z$ are used in a single iteration, our complexity is given by the sum of all those queries throughout all iterations of the method. Also, Assumption~\ref{assmpt:Lipschitz} is primary with parameter $L$, by which $F$ is also $L$-Lipschitz using Jensen’s inequality.}






\paragraph{PAGE variance-reduced estimator.}
We now summarize a variant of the PAGE estimator, originally developed for smooth nonconvex optimization by~\citet{li2020page}, adapted to our setting. In particular, given queries to $\hF$, 
we define the variance reduced estimator $\tF(\vu_k)$ for $ k \geq 1$ by 
\begin{equation}\label{eq:PAGE}
  \tF(\vu_{k}) = \begin{cases}
     \frac{1}{S_1^{(k)}} \sum_{i = 1}^{S_1^{(k)}} \hF(\vu_{k}, z^{(k)}_i) & \text{w.~p. }  p_{k}, \\
     \tF(\vu_{k - 1}) + \frac{1}{S_2^{(k)}} \sum_{i = 1}^{S_2^{(k)}} \left(\hF(\vu_{k}, z^{(k)}_i) - \hF(\vu_{k - 1}, z^{(k)}_i)\right) & \text{w.~p. } 1 - p_{k},
\end{cases}
\end{equation}
where $p_0 = 1$, $z^{(k)}_i \overset{\text{i.i.d.}}{\sim} P_z$, and $S_1^{(k)}$ and $S_2^{(k)}$ are the sample sizes at iteration $k$. Observe that Assumption~\ref{assmpt:multiQuery} guarantees that we can query $\hF$ at $\vu_k$ and $\vu_{k - 1}$ using the same random seed. Our analysis will make use of conditional expectations, and to that end, we define natural filtration $\cf_k$ by $\cf_{k} := \sigma(\{\tF(\vu_j)\}_{j \leq k})$; namely $\cf_{k}$ contains all the randomness that arises in the definitions of $\tF(\vu_j)$ for $j \leq k.$ 
%
%
Following a similar argument as in~\citet{li2020page}, we  recursively bound the variance of the estimator $\tF$, as summarized in the following lemma. The proof is provided 
in Appendix~\ref{appx:prelim-omitted}. 

\begin{restatable}{lemma}{PAGElemma}
\label{lemma:recursive-variance-bound}
Let $F$ be a monotone operator accessed via stochastic queries $\hF$, under Assumptions~\ref{assmpt:stoch-queries}--\ref{assmpt:Lipschitz}. Then, the variance of 
$\tF$ defined by Eq.~\eqref{eq:PAGE} satisfies the following recursive bound: for all $k \geq 1,$ 
\begin{equation}\notag
    \begin{aligned}
    \ee[\|{\tF(\vu_{k}) - F(\vu_{k})}\|^2] \leq \;&   \frac{p_k\sigma^2}{S_1^{(k)}} + (1 - p_k)\Big(\ee[\|{\tF(\vu_{k- 1}) - F(\vu_{k - 1})}\|^2]
    + \ee\Big[\frac{L^2\|\vu_{k} - \vu_{k - 1}\|^2}{S_2^{(k)}}\Big]\Big).
\end{aligned}
\end{equation}
\end{restatable}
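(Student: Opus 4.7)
The plan is to proceed by conditioning on the filtration $\cf_{k-1}$ and unpacking the two branches of the PAGE estimator. Conditionally on $\cf_{k-1}$, the quantities $\vu_{k-1}, \vu_k, \tF(\vu_{k-1}), F(\vu_{k-1}), F(\vu_k)$ and $\|\vu_k - \vu_{k-1}\|^2$ are all deterministic (recall that $\vu_k$ is built from iterates whose randomness is contained in $\cf_{k-1}$), so the only new randomness at step $k$ is the Bernoulli choice between the two cases of \eqref{eq:PAGE} and the fresh i.i.d.\ samples $z_i^{(k)} \sim P_z$, which are independent of $\cf_{k-1}$. By the tower property I can then write
\begin{equation*}
\ee\bigl[\|\tF(\vu_k) - F(\vu_k)\|^2\bigr] = \ee\Bigl[\,p_k \,A_k + (1-p_k)\, B_k\Bigr],
\end{equation*}
where $A_k$ and $B_k$ denote the conditional second moments of the error under the first and second branch respectively.

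For $A_k$, the first branch uses $S_1^{(k)}$ fresh i.i.d.\ unbiased samples of $F(\vu_k)$, so the standard mini-batch variance identity together with Assumption~\ref{assmpt:stoch-queries} gives $A_k \le \sigma^2/S_1^{(k)}$. For $B_k$, I would write
\begin{equation*}
\tF(\vu_k) - F(\vu_k) \;=\; \underbrace{\bigl(\tF(\vu_{k-1}) - F(\vu_{k-1})\bigr)}_{\text{$\cf_{k-1}$-measurable}} \;+\; \underbrace{\frac{1}{S_2^{(k)}}\littlesum_{i=1}^{S_2^{(k)}} Y_i^{(k)}}_{\text{mean zero given }\cf_{k-1}},
\end{equation*}
with $Y_i^{(k)} := \hF(\vu_k, z_i^{(k)}) - \hF(\vu_{k-1}, z_i^{(k)}) - \bigl(F(\vu_k) - F(\vu_{k-1})\bigr)$. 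Because the $Y_i^{(k)}$ are i.i.d.\ with conditional mean zero and are independent of the first summand given $\cf_{k-1}$, the cross term vanishes and I get
\begin{equation*}
B_k \;=\; \|\tF(\vu_{k-1}) - F(\vu_{k-1})\|^2 + \frac{1}{S_2^{(k)}}\,\ee\bigl[\|Y_1^{(k)}\|^2 \,\big|\, \cf_{k-1}\bigr].
\end{equation*}
Upper bounding the variance by the second moment and applying Assumption~\ref{assmpt:Lipschitz} yields $\ee[\|Y_1^{(k)}\|^2 \mid \cf_{k-1}] \le L^2 \|\vu_k - \vu_{k-1}\|^2$.

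Finally, combining these two bounds inside the outer expectation gives exactly the claimed recursion. The only subtle point, which is more bookkeeping than difficulty, is making sure that the Bernoulli switch is independent of both $\cf_{k-1}$ and the fresh samples, so that the conditional decomposition into $p_k A_k + (1-p_k) B_k$ is justified, and that $\vu_k, \vu_{k-1}$ are indeed $\cf_{k-1}$-measurable so that $\|\vu_k - \vu_{k-1}\|^2$ can be pulled outside the sample expectation while still remaining a random quantity in the final total expectation (hence appearing inside $\ee[\cdot]$ in the statement rather than as a deterministic factor).
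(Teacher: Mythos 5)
Your proposal is correct and follows essentially the same route as the paper's proof: condition on $\cf_{k-1}$, split into the two branches with weights $p_k$ and $1-p_k$, use the mini-batch variance identity for the first branch, and for the second branch decompose the error into the $\cf_{k-1}$-measurable part $\tF(\vu_{k-1})-F(\vu_{k-1})$ plus a conditionally mean-zero average of i.i.d.\ increments whose second moment is controlled by Assumption~\ref{assmpt:Lipschitz}. The two "subtle points" you flag (measurability of $\vu_k,\vu_{k-1}$ with respect to $\cf_{k-1}$, and independence of the Bernoulli switch) are exactly what the paper relies on as well, so no gap.
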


With the choices of $p_k, S_1^{(k)}, S_2^{(k)}$ specified in the following corollary and using induction with the inequality from Lemma~\ref{lemma:recursive-variance-bound}, we obtain the following bound on the variance. 
\begin{restatable}{corollary}{PAGEcoro}
\label{cor:variance-bound}
Given a target error $\epsilon > 0$, if for all $k \geq 1$, $p_k = \frac{2}{k + 1}, S_1^{(k)} \geq \big\lceil\frac{8\sigma^2}{p_k\epsilon^2}\big\rceil, S_2^{(k)} \geq \big\lceil\frac{8L^2\norm{\vu_k - \vu_{k - 1}}^2}{p_k^2\epsilon^2}\big\rceil$, then 
 $   \ee\big[\big\|{\tF(\vu_{k}) - F(\vu_{k})}\big\|^2\big] \leq \frac{\epsilon^2}{k}.$
\end{restatable}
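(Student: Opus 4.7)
The plan is to proceed by induction on $k$, plugging the prescribed choices of $p_k$, $S_1^{(k)}$, and $S_2^{(k)}$ into the recursion furnished by Lemma \ref{lemma:recursive-variance-bound}.

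For the base case, at $k=1$ one has $p_1 = 2/(1+1) = 1$, so the second branch in the definition of $\tF$ is taken with probability zero and the recursion collapses to $\ee[\|\tF(\vu_1) - F(\vu_1)\|^2] \leq \sigma^2 / S_1^{(1)} \leq \epsilon^2/8 \leq \epsilon^2/1$ by the choice of $S_1^{(1)}$.

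For the inductive step, fix $k \geq 2$ and assume $\ee[\|\tF(\vu_{k-1}) - F(\vu_{k-1})\|^2] \leq \epsilon^2/(k-1)$. I would then control each of the three terms on the right-hand side of Lemma \ref{lemma:recursive-variance-bound} as follows. First, the choice of $S_1^{(k)}$ immediately gives $p_k\sigma^2/S_1^{(k)} \leq p_k^2\epsilon^2/8$. Second, the inductive hypothesis directly bounds $\ee[\|\tF(\vu_{k-1}) - F(\vu_{k-1})\|^2]$ by $\epsilon^2/(k-1)$. Third, the choice of $S_2^{(k)}$ yields the \emph{pointwise} inequality $L^2\|\vu_k - \vu_{k-1}\|^2/S_2^{(k)} \leq p_k^2\epsilon^2/8$ on every sample path (with the convention $0/0 = 0$ in the degenerate case $\vu_k = \vu_{k-1}$, where the two estimator branches coincide and contribute nothing), and this bound is preserved after taking expectations. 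Substituting the three bounds into Lemma \ref{lemma:recursive-variance-bound} and collecting terms yields
\[
\ee\bigl[\|\tF(\vu_k) - F(\vu_k)\|^2\bigr] \;\leq\; (2-p_k)\,\frac{p_k^2\epsilon^2}{8} \;+\; (1-p_k)\,\frac{\epsilon^2}{k-1} \;\leq\; \frac{p_k^2\epsilon^2}{4} + \frac{\epsilon^2}{k+1},
\]
where I used the identity $(1-p_k)/(k-1) = 1/(k+1)$ for $p_k = 2/(k+1)$. Substituting $p_k^2 = 4/(k+1)^2$ reduces the right-hand side to $\epsilon^2(k+2)/(k+1)^2$, and the trivial inequality $k(k+2) = k^2+2k < k^2+2k+1 = (k+1)^2$ closes the induction at $\epsilon^2/k$.

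There is no serious obstacle here beyond careful bookkeeping: the only nontrivial point is that $S_2^{(k)}$ depends on the random quantities $\vu_k$ and $\vu_{k-1}$, so applying its defining bound inside the expectation in Lemma \ref{lemma:recursive-variance-bound} must be done via the pointwise argument above (or, equivalently, via the tower property conditioning on $\cf_{k-1}$ together with the randomness of the iterates up to step $k$) and the degenerate case $\vu_k = \vu_{k-1}$ must be explicitly handled. Everything else reduces to the elementary algebraic identity driven by $p_k = 2/(k+1)$.
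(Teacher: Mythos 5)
Your proof is correct and follows essentially the same strategy as the paper's: induction on $k$, with the base case driven by $p_1 = 1$, the inductive step obtained by substituting the batch-size bounds into Lemma~\ref{lemma:recursive-variance-bound}, and the closing algebra resting on the identity $(1-p_k)/(k-1) = 1/(k+1)$ and the inequality $k(k+2) \leq (k+1)^2$. Your explicit remark about $S_2^{(k)}$ being a $\cf_{k-1}$-measurable random quantity — so that its defining bound must be applied pointwise (or via the tower property) before taking expectation, with the degenerate case $\vu_k = \vu_{k-1}$ handled separately — is a welcome elaboration of a step the paper treats implicitly, but it does not change the argument.
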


\section{Stochastic Halpern Iteration for Cocoercive Operators}\label{sec:cocoercive}

In this section, we consider the setting of $\frac{1}{L}$-cocoercive operators $F.$ While cocoercivity is a strong assumption that implies that an operator is both Lipschitz and monotone (as discussed in Section~\ref{sec:prelim}), it is nevertheless the most basic setup for studying the Halpern iteration. In particular, while Halpern iteration can be applied directly to the nonexpansive counterpart of a cocoercive operator $F$ (i.e., to the linear transformation $\mathrm{Id} - \frac{2}{L}F$, where $\frac{1}{L}$ is an upper bound on the cocoercivity parameter of $F$), convergence does not seem possible to establish for the more general class of Lipschitz monotone operators. We begin this section by providing a generic proof of stochastic oracle complexity, which we then use to briefly illustrate how to obtain ${\cal O}(\frac{1}{\epsilon^4})$ oracle complexity with a simple minibatch stochastic estimator of $F$. 
We then show how to improve this bound to  ${\cal O}(\frac{1}{\epsilon^3})$   by applying the proposed variant of the PAGE estimator from Eq.~\eqref{eq:PAGE} to Halpern iteration. 


The stochastic variant of Halpern iteration that we consider is defined by
\begin{equation}\label{eq:Halpern}
    \vu_{k + 1} = \lambda_{k + 1}\vu_{0} + (1 - \lambda_{k + 1})\Big(\vu_{k} - \frac{2}{L_{k + 1}}\tF(\vu_{k})\Big),
\end{equation}
where $\tF$ is a stochastic (possibly biased) 
estimator of $F$, $\lambda_{k+1} = \Theta(\frac{1}{k})$ is the step size, and $L_{k+1} \geq L$ is a parameter of the algorithm. 
Compared to the classical iteration $\vu_{k + 1} = \lambda_{k + 1}\vu_{0} + (1 - \lambda_{k + 1})T(\vu_k)$, 
%
%
where $T:\rr^d \to\rr^d$ is a nonexpansive (1-Lipschitz) map~\citep{halpern1967fixed}, $T$ is replaced by the mapping $\mathrm{Id} - \frac{2}{L_{k + 1}}\tF$, which is stochastic and may not be nonexpansive (as the stochastic estimate $\tF$ of $F$ is not guaranteed to be cocoercive even when $F$ is). Compared to the iteration variant considered by~\citet{diakonikolas2020halpern}, the access to the monotone operator is stochastic and we also take slightly larger (by a factor of 2) values of $L_{k+1}$ to bound the stochastic error terms. 


Our argument for bounding the total number of stochastic queries to $F$ is based on the use of the following potential function $\cc_k = \frac{A_k}{L_k}\lVert F(\vu_k) \rVert^2 + B_k\innp{F(\vu_k), \vu_k - \ui}$, 
where $\{ A_{k} \}_{k \geq 1}$ and $\{ B_{k} \}_{k \geq 1}$ are positive and non-decreasing sequences of real numbers, while the step size $\lambda_k$ is defined by $\lambda_k := \frac{B_k}{A_k + B_k}$. 
Such potential function was previously used for the deterministic case of Halpern iteration in \citet{diakonikolas2020halpern,diakonikolas2021potential}.
Observe that even though we make oracle queries to $\hF$, the potential function $\cc_k$ and the final bound we obtain are in terms of the true operator value $F.$ 

Compared to the analysis of Halpern iteration in the deterministic case~\citep{diakonikolas2020halpern,diakonikolas2021potential}, our analysis for the stochastic case needs to account for the  error terms caused by accessing $F$ via stochastic queries and is based on an intricate inductive argument. A generic bound on iteration complexity, under mild assumptions about the estimator $\tF,$ is summarized in Theorem~\ref{thm:cocoercive}. The proof is in 
Appendix~\ref{appx:omitted-cocoercive}.

\begin{restatable}{theorem}{rateCoco}
\label{thm:cocoercive}
Given an arbitrary $\vu_0 \in \rr^d,$ suppose that iterates $\vu_k$ evolve according to Halpern iteration from Eq.~\eqref{eq:Halpern} for $k \geq 1,$ where $L_k = 2L$ and $\lambda_k = \frac{1}{k+1}.$ 
 Assume further that the stochastic estimate $\tF(\vu)$ is unbiased for $\vu = \vu_0$ and $\ee[\|F(\vu_0) - \tF(\vu_0)\|^2] \leq \frac{\epsilon^2}{8}$. 
 Given $\epsilon > 0,$ if for all $k \geq 1,$ we have that 
  $\ee\big[\big\|{F(\vu_k) - \tF(\vu_k)}\big\|^2\big] \leq \frac{\epsilon^2}{k}$, then for all $k \geq 1,$ %
\begin{equation}\label{eq:descent}
    \ee[\norm{F(\vu_k))}] \leq \frac{\Lambda_0}{k} + \Lambda_1\epsilon,
\end{equation}
where $\Lambda_{0} = 76 L\norm{\ui - \um}$ and $\Lambda_1 = 4 \sqrt{\frac{2}{{3}}}$. 
As a result, stochastic Halpern iteration from Eq.~\eqref{eq:Halpern} returns a point $\vu_k$ such that $\ee[\norm{F(\vu_k)}] \leq 4\epsilon$ after at most $N = \lceil \frac{2\Lambda_0}{\epsilon}\rceil = \co\big(\frac{L\|\vu_0 -\vu^*\|}{\epsilon}\big)$ iterations.
\end{restatable}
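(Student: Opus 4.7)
The plan is to control the potential
$\cc_k := \frac{A_k}{L_k}\|F(\vu_k)\|^2 + B_k \innp{F(\vu_k), \vu_k - \ui}$
instantiated with $A_k = k$, $B_k = 1$ (so that $\lambda_k = B_k/(A_k+B_k) = 1/(k+1)$) and $L_k = 2L$. In the deterministic Halpern analysis one already obtains $\cc_{k+1} \leq \cc_k$ with $L_k = L$; doubling $L_k$ to $2L$ leaves a quantitative negative quadratic slack in the one-step expansion of $\cc_{k+1} - \cc_k$, which is the source of robustness against stochastic noise.

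First I would derive a per-step inequality by substituting the update
$\vu_{k+1} - \vu_k = \lambda_{k+1}(\ui - \vu_k) - \tfrac{2(1-\lambda_{k+1})}{L_{k+1}}\tF(\vu_k)$
into $\cc_{k+1}$ and invoking $\frac{1}{L}$-cocoercivity of $F$ to produce a $-\frac{1}{L}\|F(\vu_{k+1})-F(\vu_k)\|^2$ term. Writing $e_k := \tF(\vu_k) - F(\vu_k)$, the resulting bound takes the schematic form
\[
\cc_{k+1} \leq \cc_k - \rho_k\|F(\vu_k)\|^2 + \alpha_k\|e_k\|^2 + \beta_k\innp{e_k, \vu_k - \ui} + \gamma_k\innp{e_k, F(\vu_k)},
\]
where $\rho_k > 0$ arises precisely from the $L_k = 2L$ slack, and $\alpha_k, \beta_k, \gamma_k$ are explicit nonnegative scalars. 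I would then apply Young's inequality to absorb the $\innp{e_k, F(\vu_k)}$ cross term into $\rho_k\|F(\vu_k)\|^2$ at the cost of an additional $O(1/\rho_k)\|e_k\|^2$ contribution.

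The main obstacle is that $\tF$ is \emph{biased} (as the PAGE estimator satisfies $\ee[e_k\mid\cf_{k-1}]\neq 0$), so the cross term $\innp{e_k, \vu_k - \ui}$ does not vanish even in conditional expectation. I would address this by a joint induction that carries two hypotheses for all $j < k$: the target bound $\ee\|F(\vu_j)\| \leq \Lambda_0/j + \Lambda_1\epsilon$ and a companion bound of the form $\ee\|\vu_j - \um\| \leq c\|\ui - \um\|$ for some absolute constant $c$. The companion bound, together with Cauchy-Schwarz and the hypothesis $\ee\|e_k\|^2 \leq \epsilon^2/k$, controls $\ee|\innp{e_k, \vu_k - \ui}|$ by $O(\|\ui-\um\|)\cdot\epsilon/\sqrt{k}$. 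Taking expectations, summing the per-step inequality over $j \leq k$, and using the initial noise hypothesis $\ee\|e_0\|^2 \leq \epsilon^2/8$ to seed the base case, I obtain an expected upper bound on $\cc_k$ of the form $O(L\|\ui-\um\|^2/k) + O(L\|\ui-\um\|\epsilon)$. From $\cc_k \geq \tfrac{k}{2L}\|F(\vu_k)\|^2 - \|F(\vu_k)\|\cdot\|\vu_k - \ui\|$, solving a resulting quadratic inequality for $\ee\|F(\vu_k)\|$ yields \eqref{eq:descent} with the stated constants $\Lambda_0 = 76L\|\ui - \um\|$ and $\Lambda_1 = 4\sqrt{2/3}$, whose exact numerical values emerge from tracking constants through the Young's inequality steps and the quadratic extraction.

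Once \eqref{eq:descent} is established, setting $N = \lceil 2\Lambda_0/\epsilon\rceil$ gives $\ee\|F(\vu_N)\| \leq \epsilon/2 + \Lambda_1\epsilon < 4\epsilon$, since $\Lambda_1 = 4\sqrt{2/3} < 4$. This yields the claimed iteration complexity $N = \co(L\|\ui - \um\|/\epsilon)$.
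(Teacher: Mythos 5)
Your outline correctly identifies the potential function $\cc_k$, the single-loop inductive structure, the role of the hypothesis $\ee\|e_k\|^2 \leq \epsilon^2/k$, and the final quadratic extraction, but there are two structural errors that would derail the proof as you propose it.

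First, the cross term $\innp{e_k,\vu_k-\ui}$ that you flag as ``the main obstacle'' does not in fact arise. When you substitute the Halpern update $\vu_k = \lambda_k\ui + (1-\lambda_k)\bigl(\vu_{k-1} - \tfrac{2}{L_k}\tF(\vu_{k-1})\bigr)$ with $\lambda_k = B_k/(A_k+B_k)$ into the expansion of $\cc_k - \cc_{k-1}$, the combination $A_k(\vu_k-\vu_{k-1}) + \tfrac{2A_k}{L_k}F(\vu_{k-1}) + B_k(\vu_k-\ui)$ collapses exactly to $\tfrac{2A_k}{L_k}\bigl(F(\vu_{k-1}) - \tF(\vu_{k-1})\bigr)$: the coefficient of $\ui - \vu_{k-1}$ is $A_k\lambda_k - B_k(1-\lambda_k) = 0$ (and this cancellation holds for both $(A_k,B_k)=(k,1)$ and the paper's $(k(k+1),\,k+1)$). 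This is precisely what the paper's Lemma~\ref{lemma:diffCk} yields: the per-step increment involves only $\|e_{k-1}\|^2$ and $\innp{F(\vu_{k-1}),e_{k-1}}$, never $\innp{e_k,\vu_k-\ui}$. Consequently the biasedness of $\tF$ causes no difficulty here, and the proposed companion induction $\ee\|\vu_j-\um\| \leq c\|\ui-\um\|$ is both unnecessary and a substantial extra burden (bounding iterate distances uniformly in a stochastic, biased-estimator setting is not a triviality you can defer). The same point applies to your last step: you lower-bound $\cc_k$ using $\|\vu_k-\ui\|$, which is random and again demands a companion bound. The paper's Lemma~\ref{lemma:error} avoids this by writing $\vu_0-\vu_k = (\vu_0-\um) + (\um-\vu_k)$ and discarding the second piece with the MVI inequality $\innp{F(\vu_k),\um-\vu_k}\leq 0$, so only the fixed quantity $\|\ui-\um\|$ appears.

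Second, the ``negative quadratic slack'' produced by $L_k = 2L$ is not of the form $-\rho_k\|F(\vu_k)\|^2$. It is $-\tfrac{A_k}{2L}\|F(\vu_k)-F(\vu_{k-1})\|^2$, and in the paper's argument it is \emph{exactly} consumed --- via $2\innp{p,q}-\|p\|^2 \leq \|q\|^2$ --- to cancel the cross term $\tfrac{A_k}{L}\innp{F(\vu_k)-F(\vu_{k-1}),\,e_{k-1}}$, leaving only $\tfrac{A_k}{2L}\|e_{k-1}\|^2$. There is no leftover $-\rho_k\|F(\vu_k)\|^2$ to absorb anything. The remaining cross term $\tfrac{A_k-A_{k-1}}{2L}\innp{F(\vu_{k-1}),e_{k-1}}$ is then handled by Young's inequality at the cost of a \emph{positive} $\tfrac{i}{8L(i+1)}\|F(\vu_{i-1})\|^2$ contribution, so the potential is actually allowed to grow; the induction on $\ee\|F(\vu_j)\|^2$ is what keeps this accumulation in check. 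Your outline, by predicting a negative $\|F(\vu_k)\|^2$ slack, inverts the sign of the dominant mechanism and would not reproduce the paper's constants.
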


{We remark that the previous result states an iteration complexity bound under a rather high accuracy assumption for the operator estimators at each iteration. In order to attain these accuracy requirements, we could either use a minibatch at every iteration, or use variance reduction. In what follows we explore both approaches.} We further remark that we made no effort to optimize the constants in the bound above, and thus the constants are likely improvable. 

Finally, observe that due to the required low error for the estimates $\ee[\|F(\vu_k) - \tF(\vu_k)\|^2] \leq\frac{\epsilon^2}{k},$ we can certify by Chebyshev bound that $\mathbb{P}[\|F(\vu_k) - \tF(\vu_k)\| \geq \epsilon] \leq \frac{1}{k}.$ In particular,
after $O(\frac{1}{\epsilon})$ iterations, if we have $\|\tilde{F}(\vu_k)\| \leq \epsilon$ (which holds in expectation), then $\|F(\vu_k)\|$ is also $O(\epsilon)$ with probability at least $1 - \epsilon$. This is particularly important for practical implementations, where a stopping criterion can be based on the value of $\|\tF(\vu_k)\|$, which, unlike $\|F(\vu_k)\|$, can be efficiently evaluated.

\subsection{Stochastic Oracle Complexity With a Simple Mini-batch Estimate}

A direct consequence of Theorem~\ref{thm:cocoercive} is that a simple estimator $\tF(\vu_k) = \frac{1}{S_k}\sum_{i=1}^{S_k} \hF(\vu_k, z_{i}^{(k)})$ leads to the overall $\co(\frac{1}{\epsilon^4})$ oracle complexity, as stated in the following corollary of Theorem~\ref{thm:cocoercive}. 

\begin{restatable}{corollary}{complexityCoco}
Under the assumptions of Theorem~\ref{thm:cocoercive}, if $\tF(\vu_k) = \frac{1}{S_k}\sum_{i=1}^{S_k} \hF(\vu_k, z_{i}^{(k)})$, where $\hF(\vu_k, z_{i}^{(k)})$ satisfies Assumption~\ref{assmpt:stoch-queries} and $z^{(k)}_i \overset{\text{i.i.d.}}{\sim} P_z$, then setting $S_k = \frac{\sigma^2(k+1)}{ \epsilon^2}$ for all $k \geq 0$ guarantees that $\ee[\norm{F(\vu_k)}] \leq 4\epsilon$ after at most $\co\big(\frac{\sigma^2 L^2\|\vu_0 - \vu^*\|^2}{\epsilon^4}\big)$ queries to $\hF$. 
\end{restatable}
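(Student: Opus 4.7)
The plan is to verify the hypotheses of Theorem~\ref{thm:cocoercive} for the simple mini-batch estimator and then sum up the per-iteration sample sizes to obtain the total oracle complexity.

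First, I would verify the variance bound required by Theorem~\ref{thm:cocoercive}. Since $z_1^{(k)},\dots,z_{S_k}^{(k)}$ are i.i.d.\ copies of $P_z$ and each $\hF(\vu_k, z_i^{(k)})$ is an unbiased estimator of $F(\vu_k)$ with variance at most $\sigma^2$ (Assumption~\ref{assmpt:stoch-queries}), an elementary calculation (conditioned on the randomness producing $\vu_k$) gives
\begin{equation*}
    \ee\big[\big\|\tF(\vu_k) - F(\vu_k)\big\|^2\big] \le \frac{\sigma^2}{S_k}.
\end{equation*}
Choosing $S_k = \lceil \sigma^2(k+1)/\epsilon^2\rceil$ (and, for the base case $k=0$, inflating by an absolute constant to meet the $\epsilon^2/8$ requirement on $\ee[\|F(\vu_0) - \tF(\vu_0)\|^2]$) yields $\ee[\|\tF(\vu_k) - F(\vu_k)\|^2] \le \epsilon^2/(k+1) \le \epsilon^2/k$, as required.

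With the hypotheses of Theorem~\ref{thm:cocoercive} in hand, we conclude that running stochastic Halpern iteration~\eqref{eq:Halpern} for at most $N = \lceil 2\Lambda_0/\epsilon\rceil = \co(L\|\vu_0-\um\|/\epsilon)$ iterations suffices to guarantee $\ee[\|F(\vu_N)\|] \le 4\epsilon$. It then remains to sum the per-iteration sample sizes:
\begin{equation*}
    \sum_{k=0}^{N} S_k \;\le\; \sum_{k=0}^{N} \left(\frac{\sigma^2(k+1)}{\epsilon^2} + 1\right) \;=\; \co\!\left(\frac{\sigma^2 N^2}{\epsilon^2}\right) \;=\; \co\!\left(\frac{\sigma^2 L^2 \|\vu_0-\um\|^2}{\epsilon^4}\right),
\end{equation*}
using $N = \co(L\|\vu_0-\um\|/\epsilon)$ in the last step. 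This gives the claimed $\co(1/\epsilon^4)$ stochastic oracle complexity.

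The argument is essentially routine once Theorem~\ref{thm:cocoercive} is available; there is no real obstacle. The only mildly subtle point is that the per-iteration batch size grows linearly with $k$, so the total query count is dominated by the last few iterations and scales as $N^2/\epsilon^2$, from which the quartic dependence on $1/\epsilon$ emerges. This calculation also makes transparent \emph{why} variance reduction is expected to help: with an estimator whose effective variance decays at rate $\epsilon^2/k$ while using only $\co(1)$ additional samples on most iterations, one can hope to save a factor of $1/\epsilon$, which is precisely the improvement targeted in the next section.
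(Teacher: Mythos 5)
Your proof is correct and follows essentially the same route as the paper: verify that the i.i.d.\ mini-batch estimator has variance $\sigma^2/S_k$, pick $S_k$ so that $\ee[\|\tF(\vu_k)-F(\vu_k)\|^2]\le \epsilon^2/k$, invoke Theorem~\ref{thm:cocoercive} to get $N=\co(L\|\vu_0-\vu^*\|/\epsilon)$, and sum $\sum_k S_k = \co(\sigma^2 N^2/\epsilon^2)$. Your parenthetical note about inflating $S_0$ by a constant to meet the $\epsilon^2/8$ requirement on the initial estimate is a small point of care that the paper itself glosses over; it does not change the $\co(\cdot)$ bound.
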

\begin{proof}
The averaged operator from the theorem statement is unbiased, by Assumption~\ref{assmpt:stoch-queries}. Further, as by Assumption~\ref{assmpt:stoch-queries}, $\|F(\vu_k) - \hF(\vu_k, z_{i}^{(k)})\|^2 \leq \sigma^2,$ it immediately follows that $\|F(\vu_k) - \tF(\vu_k)\|^2 \leq \frac{\sigma^2}{S_k} = \frac{\epsilon^2}{k + 1}$. Applying Theorem~\ref{thm:cocoercive}, the total number of iterations $N$ of Halpern iteration until $\ee[\|F(\vu_N)\|] \leq 4\epsilon$ is $N = \co(\frac{L\|\vu_0 - \vu^*\|}{\epsilon}).$ To complete the proof, it remains to bound the total number of oracle queries $\hF$ to $F,$ which is simply $\sum_{k=0}^N S_k = \co\big(\frac{N^2\sigma^2}{\epsilon^2}\big) = \co\big(\frac{\sigma^2 L^2 \|\vu_0 - \vu^*\|^2}{\epsilon^4}\big).$  
\end{proof}



\subsection{Improved Oracle Complexity via Variance Reduction}

We now consider using the recursive variance reduction method from Eq.~\eqref{eq:PAGE} to obtain the variance bound required in Theorem~\ref{thm:cocoercive}. 
The algorithm with all its corresponding parameter settings is summarized in Algorithm~\ref{alg:cocoercive}. Of course, in practice, $\|\vu_0 - \vu^*\|$ is not known, and instead of running the algorithm for a fixed number of iterations $N,$ one could run it, for example, until reaching a point with $\|\tF(\vu_k)\| \leq \epsilon.$ {Notice that convergence is guaranteed by Theorem~\ref{thm:cocoercive}; however it does not directly address the problem of the oracle complexity (as batch sizes depend on successive iterate distances). To resolve this issue, we first provide a bound on $\norm{\vu_k - \vu_{k - 1}}$.}

\begin{algorithm}
\caption{Stochastic Halpern-Cocoercive}\label{alg:cocoercive}

\textbf{Input} $\vu_0 \in \rr^d,$ $\|\vu_0 - \vu^*\|,$ $L,$ $\epsilon > 0$, $\sigma$\;

\textbf{Initialization: $\Lambda_0 = \frac{76 L \|\vu_0 - \vu^*\|}{\epsilon}$, $N= \lceil\frac{2\Lambda_0}{\epsilon}\rceil$, $S_1^{(0)} = \lceil\frac{8\sigma^2}{\epsilon^2}\rceil$}\;

$\tF(\vu_0) = \frac{1}{S_1^{(0)}}\sum_{i=1}^{S_1^{(0)}}\hF(\vu_0, z_i^{(0)})$\;

\For{$k = 1:N$}
{
$\vu_k = \frac{1}{k+1}\vu_0 + \frac{k}{k+1}\big(\vu_{k-1} - \frac{1}{L}\tF(\vu_{k-1})\big)$\;

$p_k = \frac{2}{k+1},$ $S_1^{(k)} = \lceil\frac{8\sigma^2}{p_k\epsilon^2}\rceil,$ $S_2^{(k)} = \lceil \frac{8L^2\|\vu_k - \vu_{k-1}\|^2}{{p_k}^2 \epsilon^2}\rceil$\;

Compute $\tF(\vu_k)$ based on Eq.~\eqref{eq:PAGE}
}
\textbf{Return:} $\vu_N$
\end{algorithm}
\begin{restatable}{lemma}{ukCoco}
\label{lemma:uk}
Given an arbitrary initial point $\vu_0 \in \rr^d,$ let $\{ \vu_k \}_{k \geq 1}$ be the sequence of points produced by Algorithm~\ref{alg:cocoercive}. 
Assume further that $\lambda_k = \frac{1}{k + 1}$, $L_k = 2L$ for all $k \geq 0$. 
Then, 
\begin{equation}\label{eq:ukOne}
  \norm{\vu_k - \vu_{k - 1}}^2 \leq \begin{cases}
     \frac{1}{4L^2}\|{\tF(\vu_0)}\|^2 & \text{if }  k = 1, \\
     \frac{2k^2}{L^2(k + 1)^2}\|{\tF(\vu_{k - 1})}\|^2 + \sum_{i = 0}^{k - 2}\frac{2(i + 1)^2}{k(k + 1)^2L^2}\|{\tF(\vu_i)}\|^2 & \text{if } k \geq 2.
\end{cases}
\end{equation}
Moreover, 
if for $1 \leq i \leq k-1$, all of the following conditions hold (same as in Theorem~\ref{thm:cocoercive}): (i) $\ee[\|{F(\vu_i)}\|] \leq \frac{\Lambda_0}{i} + \Lambda_1\epsilon,$ 
where $\Lambda_{0} = 76 L\norm{\ui - \um}$ and $\Lambda_1 = 4 \sqrt{\frac{2}{{3}}}$, (ii)  $\ee\big[\big\|{F(\vu_i) - \tF(\vu_i)}\big\|^2\big] \leq \frac{\epsilon^2}{i}$, and (iii) $\epsilon \leq \frac{\Lambda_0}{k}$, then  
$\ee[\norm{\vu_k - \vu_{k - 1}}^2] = \co\Big(\frac{\norm{\vu_0 - \vu^*}^2}{k^2}\Big).$
\end{restatable}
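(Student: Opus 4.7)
The proof naturally splits into two parts, one per claim.

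For the algebraic bound on $\|\vu_k - \vu_{k-1}\|^2$, I would derive a closed form for $\vu_k - \vu_{k-1}$ as an affine combination of past stochastic operator evaluations. Substituting $\lambda_k = \tfrac{1}{k+1}$ and $L_k = 2L$ into Eq.~\eqref{eq:Halpern} gives $\vu_k - \vu_{k-1} = \tfrac{1}{k+1}(\vu_0 - \vu_{k-1}) - \tfrac{k}{L(k+1)}\tF(\vu_{k-1})$, which already handles $k=1$. For $k \geq 2$, I would introduce the renormalized difference $\vw_k := k(k+1)(\vu_k - \vu_{k-1})$, and eliminate the nonlocal term $\vu_0 - \vu_k$ appearing in the recursion at index $k+1$ by solving the recursion at index $k$ for $\vu_0 - \vu_{k-1}$. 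A short rearrangement produces the first-order telescoping relation $\vw_k - \vw_{k-1} = \tfrac{k(k-1)}{L}\tF(\vu_{k-2}) - \tfrac{k^2}{L}\tF(\vu_{k-1})$ with base value $\vw_1 = -\tF(\vu_0)/L$. Summing from $2$ to $k$ yields
\[
k(k+1)(\vu_k - \vu_{k-1}) \;=\; \littlesum_{i=0}^{k-2}\tfrac{i+1}{L}\tF(\vu_i) \;-\; \tfrac{k^2}{L}\tF(\vu_{k-1}).
\]
Applying $\|a - b\|^2 \leq 2\|a\|^2 + 2\|b\|^2$, then Cauchy--Schwarz in the form $(\sum_{i=0}^{k-2} c_i)^2 \leq (k-1)\sum_{i=0}^{k-2} c_i^2$, and finally $\frac{k-1}{k^2} \leq \frac{1}{k}$, reproduces Eq.~\eqref{eq:ukOne}.

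For the expected bound, I would take expectation of Eq.~\eqref{eq:ukOne} and control $\ee[\|\tF(\vu_i)\|^2] \leq 2\ee[\|F(\vu_i)\|^2] + 2\ee[\|\tF(\vu_i) - F(\vu_i)\|^2]$. Condition (ii) bounds the variance term by $2\epsilon^2/i$, and (iii), $\epsilon \leq \Lambda_0/k$, upgrades this to $\co(\Lambda_0^2/(k^2 i))$. For $\ee[\|F(\vu_i)\|^2]$ I would appeal to cocoercivity together with $F(\um) = \zeros$, which yields $\|F(\vu_i)\| \leq L\|\vu_i - \um\|$, reducing the task to a second-moment bound $\ee[\|\vu_i - \um\|^2] = \co(\|\vu_0 - \um\|^2/i^2)$. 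Such a bound is natural because the potential $\cc_k = \frac{A_k}{L_k}\|F(\vu_k)\|^2 + B_k\innp{F(\vu_k), \vu_k - \ui}$ underlying Theorem~\ref{thm:cocoercive} directly controls $\ee[\|F(\vu_k)\|^2]$ through its leading quadratic term; reusing the intermediate estimate from that proof (or coupling with it via a joint induction on $\ee[\|F(\vu_i)\|^2]$ and $\ee[\|\vu_i - \vu_{i-1}\|^2]$) gives $\ee[\|F(\vu_i)\|^2] = \co(\Lambda_0^2/i^2)$, hence $\ee[\|\tF(\vu_i)\|^2] = \co(\Lambda_0^2/i^2)$ for $i \geq 1$. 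The $i = 0$ term is handled separately via the initial $\epsilon^2/8$ variance and $\|F(\vu_0)\| \leq L\|\vu_0 - \um\|$.

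Plugging these estimates back into Eq.~\eqref{eq:ukOne}, the leading term is $\tfrac{2k^2}{L^2(k+1)^2}\cdot \co(\Lambda_0^2/k^2) = \co(\|\vu_0 - \um\|^2/k^2)$, while the summation contributes $\sum_{i=0}^{k-2}\tfrac{2(i+1)^2}{k(k+1)^2 L^2}\cdot \co(\Lambda_0^2/(i+1)^2) = \co\bigl(\Lambda_0^2(k-1)/(L^2 k(k+1)^2)\bigr) = \co(\|\vu_0 - \um\|^2/k^2)$, delivering the claim. The main obstacle is the step from first- to second-moment control of $\|F(\vu_i)\|$: condition (i) furnishes only $\ee[\|F(\vu_i)\|]$, so closing the gap to $\ee[\|F(\vu_i)\|^2]$ forces us to enter the potential-function analysis behind Theorem~\ref{thm:cocoercive}, where the required squared-norm estimate is implicit, rather than relying on its stated conclusion alone.
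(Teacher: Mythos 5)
Your proposal is correct and follows essentially the same path as the paper's proof. The telescoping of $\vw_k = k(k+1)(\vu_k - \vu_{k-1})$ is a repackaged version of the paper's direct recursive unrolling of Eq.~\eqref{eq:Halpern} and yields the identical closed form; the subsequent two applications of the generalized Young/Cauchy--Schwarz inequality together with $\frac{k-1}{k}\leq 1$ match the paper exactly. Your key observation---that condition (i) is insufficient and one must reach into the \emph{proof} of Theorem~\ref{thm:cocoercive}, whose inductive argument actually certifies the second-moment bound $(\ee[\|F(\vu_i)\|^2])^{1/2} \leq \Lambda_0/i + \Lambda_1\epsilon$---is precisely how the paper closes the argument. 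One small caution: the intermediate suggestion to use cocoercivity to reduce to a bound $\ee[\|\vu_i - \um\|^2] = \co(\|\vu_0 - \um\|^2/i^2)$ is a dead end, since no such iterate-distance decay is established for Halpern iteration absent a sharpness condition; you correctly abandon that route in favor of reading $\ee[\|F(\vu_i)\|^2] = \co(\Lambda_0^2/i^2)$ directly off the potential-function analysis, so the final argument is sound.
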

\begin{proof}
For $k = 1$, $\vu_1 = \frac{1}{2}\vu_0 + \frac{1}{2}\big(\vu_0 - \frac{1}{L}\tF(\vu_0)\big)$, which leads to $\|{\vu_1 - \vu_0}\|^2 = \big\|{-\frac{1}{2L}\tF(\vu_0)}\big\|^2 = \frac{1}{4L^2}\big\|{\tF(\vu_0)}\big\|^2$. For $k \geq 2$,  recursively applying Eq.~\eqref{eq:Halpern}, we have $\vu_k - \vu_{k - 1} = \lambda_k(\vu_0 - \vu_{k - 1}) - \frac{1 - \lambda_k}{L}\tF(\vu_{k - 1}) = \lambda_k(1 - \lambda_{k - 1})(\vu_0 - \vu_{k - 2}) + \frac{\lambda_k(1 - \lambda_{k - 1})}{L}\tF(\vu_{k - 2}) - \frac{1 - \lambda_{k}}{L}\tF(\vu_{k - 1})$, leading to 
\begin{equation*}
\vu_k - \vu_{k - 1} = - \frac{1 - \lambda_k}{L}\tF(\vu_{k - 1}) + \sum_{i = 0}^{k - 2}\frac{\lambda_k}{L}\Big(\prod_{j = i + 1}^{k - 1}(1 - \lambda_j)\Big)\tF(\vu_i).
\end{equation*}
%
Recalling that $\lambda_k = \frac{1}{k + 1}$, we have $\norm{\vu_k - \vu_{k - 1}}^2 = \norm{- \frac{k}{L(k + 1)}\tF(\vu_{k - 1}) + \sum_{i = 0}^{k - 2}\frac{i + 1}{k(k + 1)L}\tF(\vu_i)}^2$, which gives us Inequality~\eqref{eq:ukOne} by applying a generalized variant of Young's inequality $\norm{\sum_{i = 1}^{K}X_i}^2 \leq \sum_{i = 1}^{K}K \norm{X_i}^2$ twice (first to the sum of $- \frac{k}{L(k + 1)}\tF(\vu_{k - 1})$ and the summation term, then to the summation term, while noticing that $\frac{k-1}{k} \leq 1$). 

For the second claim, by the lemma assumptions and the analysis in the proof for Theorem~\ref{thm:cocoercive}, 
we have $\ee[\|{F(\vu_i)}\|^2] = \co(\frac{L^2\norm{\vu_0 - \vu^*}^2}{i^2})$
for $i \leq k-1 \leq \co\big(\frac{1}{\epsilon}\big)$, thus $\ee[\|{\tF(\vu_i)}\|^2] \leq 2\ee[\|{F(\vu_i)}\|]^2 + 2\ee[\|{F(\vu_i) - \tF(\vu_i)}\|^2] = \co(\frac{L^2\|{\vu_0 - \vu^*}\|^2}{i^2})$. Plugging this bound into Inequality~\eqref{eq:ukOne}, we get $\ee[\|{\vu_k - \vu_{k - 1}}\|^2] = \co(\frac{\|{\vu_0 - \vu^*}\|^2}{k^2})$. 
\end{proof}

Using Lemma~\ref{lemma:uk} and making the appropriate parameter settings for the estimator from Eq.~\eqref{eq:PAGE}, it is now possible to apply Theorem~\ref{thm:cocoercive} to obtain the improved ${\cal O}(\frac{1}{\epsilon^3})$ stochastic oracle complexity bound, as stated in the following corollary of Theorem~\ref{thm:cocoercive}. 

\begin{restatable}{corollary}{complexityCocoVR}
\label{thm:complexity}
Given arbitrary $\vu_0 \in \rr^d$ and $\epsilon > 0,$ consider $\vu_N$ returned by Algorithm~\ref{alg:cocoercive}. 
Then, $\ee[\norm{F(\vu_N)}] \leq 4\epsilon$ with 
expected
$\co(\frac{\sigma^2L\norm{\vu_0 - \vu^*} + L^3\norm{\vu_0 - \vu^*}^3}{\epsilon^3})$ oracle queries to $\hF$.
 %
\end{restatable}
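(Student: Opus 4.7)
The plan is to directly combine the iteration-complexity bound of Theorem~\ref{thm:cocoercive} with the PAGE variance bound of Corollary~\ref{cor:variance-bound} and the iterate-displacement bound of Lemma~\ref{lemma:uk}, and then carefully total up the expected oracle calls. First I would verify the hypotheses of Theorem~\ref{thm:cocoercive} for Algorithm~\ref{alg:cocoercive}. The step size is $\lambda_k = \tfrac{1}{k+1}$ and $L_k = 2L$, as required. The initialization $\tF(\vu_0)=\frac{1}{S_1^{(0)}}\sum_{i=1}^{S_1^{(0)}}\hF(\vu_0,z_i^{(0)})$ with $S_1^{(0)}=\lceil 8\sigma^2/\epsilon^2\rceil$ is unbiased by Assumption~\ref{assmpt:stoch-queries} and satisfies $\ee[\|\tF(\vu_0)-F(\vu_0)\|^2]\leq \sigma^2/S_1^{(0)}\leq \epsilon^2/8$. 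The parameter choices $p_k=\tfrac{2}{k+1}$, $S_1^{(k)}=\lceil 8\sigma^2/(p_k\epsilon^2)\rceil$, $S_2^{(k)}=\lceil 8L^2\|\vu_k-\vu_{k-1}\|^2/(p_k^2\epsilon^2)\rceil$ match exactly the hypotheses of Corollary~\ref{cor:variance-bound}, so by induction on $k$ we obtain $\ee[\|\tF(\vu_k)-F(\vu_k)\|^2]\leq \epsilon^2/k$ for every $k\geq 1$.

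With the variance hypothesis of Theorem~\ref{thm:cocoercive} certified, the theorem immediately gives $\ee[\|F(\vu_N)\|]\leq 4\epsilon$ after $N=\lceil 2\Lambda_0/\epsilon\rceil=\co(L\|\vu_0-\vu^*\|/\epsilon)$ iterations. It remains only to bound the expected total number of calls to $\hF$. Because each iteration $k\geq 1$ draws the large minibatch $S_1^{(k)}$ with probability $p_k$ and the pairwise-difference minibatch of size $S_2^{(k)}$ with probability $1-p_k$, the expected per-iteration cost is $p_k S_1^{(k)}+(1-p_k)S_2^{(k)}$, and we must bound both conditionally random terms in expectation.

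For the $S_1^{(k)}$ contribution, $p_k S_1^{(k)}\leq 8\sigma^2/\epsilon^2+p_k=\co(\sigma^2/\epsilon^2)$, so summing over $k=0,\dots,N$ yields $\co(\sigma^2 N/\epsilon^2)=\co\bigl(\sigma^2 L\|\vu_0-\vu^*\|/\epsilon^3\bigr)$. For the $S_2^{(k)}$ contribution, $(1-p_k)S_2^{(k)}\leq 8L^2\|\vu_k-\vu_{k-1}\|^2/(p_k\epsilon^2)+1=\co\bigl(L^2(k+1)\|\vu_k-\vu_{k-1}\|^2/\epsilon^2\bigr)$. Here I would invoke Lemma~\ref{lemma:uk}, whose hypotheses are precisely the variance bound and the iteration bound already established above, to obtain $\ee[\|\vu_k-\vu_{k-1}\|^2]=\co(\|\vu_0-\vu^*\|^2/k^2)$. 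Substituting this in gives $\ee[(1-p_k)S_2^{(k)}]=\co(L^2\|\vu_0-\vu^*\|^2/\epsilon^2)$ per iteration, and summing over $N$ iterations produces $\co\bigl(NL^2\|\vu_0-\vu^*\|^2/\epsilon^2\bigr)=\co\bigl(L^3\|\vu_0-\vu^*\|^3/\epsilon^3\bigr)$. Adding the two contributions yields the claimed bound.

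The main obstacle is conceptual rather than computational: the batch size $S_2^{(k)}$ is itself a random quantity determined by the realized trajectory, and Lemma~\ref{lemma:uk}'s $\co(\|\vu_0-\vu^*\|^2/k^2)$ bound on $\ee[\|\vu_k-\vu_{k-1}\|^2]$ in turn relies on the very variance bound we are using to invoke Theorem~\ref{thm:cocoercive}. The argument must therefore be organized so that the inductive variance control of Corollary~\ref{cor:variance-bound} and the inductive operator-norm control of Theorem~\ref{thm:cocoercive} are established first, and only afterwards the iterate-displacement bound of Lemma~\ref{lemma:uk} is applied inside the expected-complexity computation; doing this in the wrong order would produce a circular argument. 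Once this dependence order is respected, the remaining work is essentially arithmetic on telescoping sums of $1/k$ and $1/k^2$.
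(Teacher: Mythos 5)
Your proof follows the paper's argument exactly: certify the hypotheses of Theorem~\ref{thm:cocoercive} (unbiasedness and variance bound at $\vu_0$ from the initial minibatch, then the per-iteration variance bound $\epsilon^2/k$ via Corollary~\ref{cor:variance-bound}), invoke the theorem to get the iteration count $N=\co(L\|\vu_0-\vu^*\|/\epsilon)$, and bound the expected per-iteration oracle cost by invoking Lemma~\ref{lemma:uk} for $\ee[\|\vu_k-\vu_{k-1}\|^2]=\co(\|\vu_0-\vu^*\|^2/k^2)$. Your closing remark about the logical order --- establishing the variance and operator-norm inductions before using the displacement bound --- is exactly the organization the paper adopts, with Lemma~\ref{lemma:uk} explicitly importing the conditions of Theorem~\ref{thm:cocoercive} as hypotheses.

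Two small slips, neither of which affects the asymptotics: (i) the expected per-iteration cost is $p_k S_1^{(k)} + 2(1-p_k)S_2^{(k)}$, not $p_k S_1^{(k)} + (1-p_k)S_2^{(k)}$, because the second branch of the PAGE update in Eq.~\eqref{eq:PAGE} queries $\hF$ at both $\vu_k$ and $\vu_{k-1}$; (ii) the intermediate bound should read $(1-p_k)S_2^{(k)}\le 8L^2\|\vu_k-\vu_{k-1}\|^2/(p_k^2\epsilon^2)+1 = \co\big(L^2(k+1)^2\|\vu_k-\vu_{k-1}\|^2/\epsilon^2\big)$, with $p_k^2$ rather than $p_k$ in the denominator; as written, your displayed formula would give $\co(L^2\|\vu_0-\vu^*\|^2/(k\epsilon^2))$ rather than the $\co(L^2\|\vu_0-\vu^*\|^2/\epsilon^2)$ you (correctly) state in the next sentence. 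With the $p_k^2$ corrected, the per-iteration and total bounds match the paper's derivation.
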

\begin{proof}
Let $m_k$ be the number of stochastic queries made by the estimator from Eq.~\eqref{eq:PAGE} in iteration $k$. Using Corollary~\ref{cor:variance-bound}, we have 
\begin{equation*}
\begin{aligned}
    \ee\big[m_{k + 1} | \cf_{k-1}\big] = \;& p_k S_1^{(k)} + 2(1 - p_k)S_2^{(k)}  = p_k\big\lceil\textstyle\frac{8\sigma^2}{p_k\epsilon^2}\big\rceil + 2(1 - p_k)\big\lceil\textstyle\frac{8L^2\norm{\vu_k - \vu_{k - 1}}^2}{p_k^2\epsilon^2}\big\rceil,
\end{aligned}
\end{equation*}
where the first equality holds because $S_2^{(k)}$ is measurable w.r.t.~$\cf_{k-1}$ and the only random choice that remains is the selection of the estimator in Eq.~\eqref{eq:PAGE} determined by probabilities $p_k$ and $1-p_k.$ 

Taking expectation with respect to all randomness on both sides, rearranging the terms, and using the fact that $\lceil x \rceil \leq x + 1$ for any $x \in \rr$, we obtain $\ee[m_{k + 1}] \leq \frac{8\sigma^2}{\epsilon^2} + \frac{16(1 - p_k)L^2\ee[\norm{\vu_k - \vu_{k - 1}}^2]}{p_k^2\epsilon^2} + 2$. Recalling that $p_k = \frac{2}{k + 1} = \co(\frac{1}{k})$ and $\ee[\norm{\vu_k - \vu_{k - 1}}^2] = \co\big(\frac{\norm{\vu_0 - \vu^*}^2}{k^2}\big)$ by Lemma~\ref{lemma:uk}, it follows that  $\ee[m_{k + 1}] = \co\big(\frac{\sigma^2 + L^2\norm{\vu_0 - \vu^*}^2}{\epsilon^2}\big)$. As, by Theorem~\ref{thm:cocoercive}, the total number of iterations to attain $4\epsilon$ norm of the operator in expectation is $N = \left\lceil \frac{2\Lambda_0}{\epsilon} \right\rceil = \co\big(\frac{L\norm{\vu_0 - \vu^*}}{\epsilon}\big)$ and $m_0 = S_1^{(0)} = \co\big(\frac{\sigma^2}{\epsilon^2}\big)$,  the total number of queries to $\hF$ is $\ee[M] = \ee[\sum_{k = 1}^{N}m_k] = \co\big(\frac{\sigma^2L\norm{\vu_0 - \vu^*} + L^3\norm{\vu_0 - \vu^*}^3}{\epsilon^3}\big)$.
\end{proof}

We note in passing that the running time guarantee of this algorithm is of Las Vegas-type: despite its iteration number being surely bounded by $\big\lceil \frac{2\Lambda_0}{\epsilon} \big\rceil = \co\big(\frac{L\|\vu_0 -\vu^*\|}{\epsilon}\big)$, the batch sizes (in particular $S_2^{(k)}$) are random, and are not universally bounded. 
We further argue that Algorithm~\ref{alg:cocoercive} can be extended to {\bf constrained settings} by defining the operator mapping as in \citet{diakonikolas2020halpern} and modifying the variance-reduced stochastic estimator accordingly based on the projection of $\tF$. We show that the newly defined operator mapping is also cocoercive while the variance of the modified estimator is bounded by the variance of $\tF$, so arguments from Theorem~\ref{thm:cocoercive} and Corollary~\ref{thm:complexity} extend to this case. This modified estimator need not be unbiased (as neither is $\tF$); however, this is irrelevant to our analysis as it does not require unbiasedness. 
For completeness, a detailed extension to the constrained case is provided in Appendix~\ref{appx:omitted-coco-constrained}.


\section{Monotone and Lipschitz Setup} \label{sec:two_step_Halpern}

Throughout this section, we assume that $F$ is monotone and $L$-Lipschitz. While the previous section addresses the cocoercive setup using the classical version of Halpern iteration adapted to cocoercive operators, it is unclear how to directly generalize this result to the setting with monotone Lipschitz operators. In the deterministic setting, generalization to monotone Lipschitz operators can be achieved through the use of a resolvent operator (see \citet{diakonikolas2020halpern}). However, such an approach incurs an additional $\log(1/\epsilon)$ factor in the iteration complexity coming from approximating the resolvent and it is further unclear how to generalize it to stochastic settings, as the properties of the stochastic estimate $\tF$ of $F$ do not readily translate into the same or similar properties for the resolvent of $\tF.$ 
Instead of taking the approach based on the resolvent, we  consider a recently proposed two-step variant of Halpern iteration~\citep{tran2021halpern}, adapted here to the stochastic setting. The variant uses extrapolation and  is defined by 
\begin{equation}\label{eq:mono-uk}
\left\{\begin{aligned}
\vv_{k} \quad &:=\lambda_{k} \vu_{0}+\left(1-\lambda_{k}\right) \vu_{k}-\eta_{k} \tF(\vv_{k-1}), \\
\vu_{k+1} &:=\lambda_{k} \vu_{0}+\left(1-\lambda_{k}\right) \vu_{k}-\eta_{k} \tF(\vv_{k}),
\end{aligned}\right.
\end{equation}
where $\lambda_k \in \left[0, 1\right)$, $\eta_k > 0$, and $\tF$ is defined by~\eqref{eq:PAGE}. The resulting algorithm with a complete parameter setting is provided in Algorithm~\ref{alg:monotone}.

\begin{algorithm}
\caption{Extrapolated Stochastic Halpern-Monotone (E-Halpern)}\label{alg:monotone}

\textbf{Input:} $\vu_0 \in \rr^d,$ $\|\vu_0 - \vu^*\|,$ $0 < \eta_0 \leq \frac{1}{3 \sqrt{3} L},$ $L,$ $\epsilon > 0$, $\sigma$\;

\textbf{Initialize:} $\vv_{-1} = \vu_0,$ $S_1^{(-1)} = S_1^{(0)} = \lceil\frac{8\sigma^2}{\epsilon^2}\rceil$, $M = 9L^2$, $\underline{\eta} = \frac{\eta_{0}(1-2 M \eta_{0}^{2})}{1-M \eta_{0}^{2}}$\;

Set $\Lambda_0 = \frac{4(L^2\eta_0\underline{\eta} + 1)\norm{\vu_0 - \vu^*}^2}{\underline{\eta}^2}$, $\Lambda_1 = \frac{5\left(1 + M\underline{\eta}\eta_0\right)}{M\underline{\eta}^2}$, $N= \big\lceil\frac{\sqrt{\Lambda_0}}{\sqrt{\Lambda_1}\epsilon}\big\rceil$\;

$\tF(\vv_{-1}) = \frac{1}{S_1^{(-1)}}\sum_{i=1}^{S_1^{(-1)}}\hF(\vv_{-1}, z_i^{(-1)})$, where $z_i^{(-1)} \stackrel{\text{i.i.d.}}{\sim} {\cal P}_z$\; 

\For{$k = 1:N$}
{
$\vv_{k-1} = \frac{1}{k+1} \vu_{0} + \frac{k}{k+1} \vu_{k-1}-\eta_{k-1} \tF(\vv_{k-2})$\;

$p_{k - 1} = \min(\frac{2}{k}, 1),$ $S_1^{(k - 1)} = \lceil\frac{8\sigma^2}{p_{k - 1}\epsilon^2}\rceil,$ $S_2^{(k - 1)} = \lceil \frac{8L^2\|\vv_{k - 1} - \vv_{k-2}\|^2}{{p_{k - 1}}^2 \epsilon^2}\rceil$\;

Compute $\tF(\vv_{k - 1})$ based on Eq.~\eqref{eq:PAGE}\;

$\vu_{k} = \frac{1}{k+1} \vu_{0} + \frac{k}{k+1}\vu_{k-1}-\eta_{k-1} \tF(\vv_{k-1})$\;

$\eta_k = \frac{(1 - \frac{1}{(k+1)^2} - M{\eta_{k-1}}^2 )(k+1)^2}{(1- M{\eta_{k-1}}^2)k (k+2)}\eta_{k-1}$
 
}
\textbf{Return:} $\vu_N$
\end{algorithm}

To analyze the convergence of the extrapolated Halpern variant from Eq.~\eqref{eq:mono-uk}, we use the potential function $\cv_k = A_{k}\|F(\vu_{k})\|^{2} + B_{k}\innp{F(\vu_{k}), \vu_{k} - \vu_{0}} + c_{k} L^{2}\|\vu_{k} - \vv_{k-1}\|^{2}$, previously used by~\citet{tran2021halpern}, 
%
where $A_k$, $B_k$ and $c_k$ are positive parameters to be determined later. Observe that this is essentially the same potential function as $\cc_k,$ corrected by the quadratic term $c_{k} L^{2}\|\vu_{k} - \vv_{k-1}\|^{2}$ to account for error terms appearing in the analysis of the two-step variant from Eq.~\eqref{eq:mono-uk}. Similarly as in the cocoercive setup, the potential function is not monotonically non-increasing, due to the error terms that arise due to the stochastic access to $F.$ Bounding these error terms requires a careful technical argument,  
and is the main technical contribution of this section. Due to space constraints, the complete technical argument is deferred to Appendix~\ref{appx:monotone}, while the main results are stated below.

\begin{restatable}{theorem}{rateMono}
\label{thm:mono-rate}
Given an arbitrary initial point $\vu_0 \in \rr^d$ 
and target error $\epsilon>0$, assume that the iterates $\vu_k$ evolve according to Algorithm~\ref{alg:monotone} for $k \geq 1$. Then, for all $k \geq 2,$
\begin{equation}\label{ineq:mono-rate}
\begin{aligned}
\ee\left[\norm{F(\vu_k)}^2 + 2L^2\norm{\vu_k - \vv_{k - 1}}^2\right] \leq \;& \frac{\Lambda_0}{(k + 1)(k + 2)} + \Lambda_1\epsilon^2,
\end{aligned}
\end{equation}
where $\Lambda_0 = \frac{4(L^2\eta_0\underline{\eta} + 1)\norm{\vu_0 - \vu^*}^2}{\underline{\eta}^2}$ and $\Lambda_1 = \frac{5\left(1 + M\underline{\eta}\eta_0\right)}{M\underline{\eta}^2}$. 
In particular, 
$\ee\big[\norm{F(\vu_N)}^2 + 2L^2\norm{\vu_N - \vv_{N - 1}}^2\big] \leq 2\Lambda_1\epsilon^2 = \co(\epsilon^2)$ after at most $N = \big\lceil\frac{\sqrt{\Lambda_0}}{\sqrt{\Lambda_1}\epsilon}\big\rceil = \co\big(\frac{L\norm{\vu_0 - \vu^*}}{\epsilon}\big)$ iterations. The total number of oracle queries to $\hF$ is $\co\big(\frac{\sigma^2L\norm{\vu_0 - \vu^*} + L^3\norm{\vu_0 - \vu^*}^3}{\epsilon^3}\big)$ 
in expectation. 
\end{restatable}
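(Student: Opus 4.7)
The plan is to mirror the three-step structure used in the cocoercive analysis (Theorem~\ref{thm:cocoercive} combined with Lemma~\ref{lemma:uk} and Corollary~\ref{thm:complexity}), but carried out on the extrapolated scheme \eqref{eq:mono-uk} using the potential $\cv_k = A_k\|F(\vu_k)\|^2 + B_k\innp{F(\vu_k),\vu_k-\vu_0} + c_k L^2\|\vu_k-\vv_{k-1}\|^2$. First, I would establish a deterministic--style one-step potential inequality that, after using monotonicity of $F$, the two-step identity $\vu_{k+1}-\vv_k = -\eta_k(\tF(\vv_k)-\tF(\vv_{k-1}))$, and the Lipschitz-in-expectation assumption~\ref{assmpt:Lipschitz}, takes the schematic form
\begin{equation*}
\cv_{k+1}-\cv_k \leq -L^2\bigl(\tfrac{\theta a_k}{M\eta_k^2}-c_{k+1}\bigr)\|\vu_{k+1}-\vv_k\|^2 - L^2(c_k-a_k)\|\vu_k-\vv_{k-1}\|^2 + \tfrac{2a_k}{M\eta_k^2}\|F(\vv_k)-\tF(\vv_k)\|^2 + a_k\|F(\vv_{k-1})-\tF(\vv_{k-1})\|^2,
\end{equation*}
with $M=9L^2$. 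The two stochastic error terms are the novel part: they must be leaked out of the two negative quadratics, which is exactly why the step $\eta_0$ is chosen strictly below the deterministic threshold $\tfrac{1}{\sqrt{2M}}$ so that, with $\lambda_k = \tfrac{1}{k+2}$ and $\eta_k$ generated by the recursion in Algorithm~\ref{alg:monotone}, the sequence $\eta_k$ is non-increasing with $\lim_k \eta_k \geq \underline{\eta}>0$ (this is the standard lemma from~\citet{tran2021halpern} that I would invoke as a black box).

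Second, I would set $b_k,a_k,c_k$ so that $a_k=\Theta(k)$, $c_k = a_k$, and $A_k=\Theta(k^2/L^2)$, $B_k=\Theta(k)$. Coupled with the bound $B_k\innp{F(\vu_k),\vu_k-\vu_0} \geq -\tfrac{A_k}{2}\|F(\vu_k)\|^2 - \tfrac{B_k^2}{2A_k}\|\vu_0-\vu^*\|^2$ (using monotonicity at $\vu^*$ to replace $\vu_0-\vu_k$ by $\vu_0-\vu^*$ up to a non-positive inner product), this lower bounds $\cv_k$ by roughly $\tfrac{A_k}{2}\|F(\vu_k)\|^2 + c_k L^2\|\vu_k-\vv_{k-1}\|^2$ minus an $O(\|\vu_0-\vu^*\|^2)$ constant, which is the quantity we need. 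Telescoping the potential inequality from $0$ to $k$ in expectation, the two negative terms vanish and we are left with an additive stochastic residual $\sum_{j\leq k}\bigl(\tfrac{2a_j}{M\eta_j^2}+a_j\bigr)\ee\|F(\vv_j)-\tF(\vv_j)\|^2$. Plugging in Corollary~\ref{cor:variance-bound} with its bound $\ee\|\tF(\vv_j)-F(\vv_j)\|^2\leq \epsilon^2/j$ and recalling $a_j = \Theta(j)$, this residual is $O(k\epsilon^2)$. Dividing by $A_k = \Theta(k^2/L^2)$ yields exactly the bound \eqref{ineq:mono-rate} with the stated $\Lambda_0,\Lambda_1$.

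The main obstacle, as in the cocoercive case, is that Corollary~\ref{cor:variance-bound} requires batch sizes $S_2^{(k)}=\lceil 8L^2\|\vv_k-\vv_{k-1}\|^2/(p_k^2\epsilon^2)\rceil$ that depend on the random successive-iterate gap, so the ``variance is $\epsilon^2/k$'' hypothesis and the ``potential is tame'' conclusion must be established jointly by induction. I would run an induction that maintains two coupled statements at step $k$: (i) $\ee\|\tF(\vv_j)-F(\vv_j)\|^2 \leq \epsilon^2/j$ for all $j\leq k$, and (ii) the potential bound \eqref{ineq:mono-rate}. The inductive step uses (i) up to index $k-1$ inside the telescoped potential inequality to deduce (ii) at index $k$, and then uses (ii) together with the two-step recursion expansion of $\vv_k-\vv_{k-1}$ (analogous to Lemma~\ref{lemma:uk}, writing $\vv_k-\vv_{k-1}$ as a linear combination of past $\tF(\vv_j)$'s with coefficients decaying as $1/k$) to show $\ee\|\vv_k-\vv_{k-1}\|^2 = O(\|\vu_0-\vu^*\|^2/k^2)$, which, inserted back into the recursive bound of Lemma~\ref{lemma:recursive-variance-bound}, reproduces (i) at step $k$.

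Finally, the iteration count $N=\lceil\sqrt{\Lambda_0/\Lambda_1}/\epsilon\rceil=O(L\|\vu_0-\vu^*\|/\epsilon)$ follows by equating the two terms of \eqref{ineq:mono-rate}. For the oracle complexity, I would reproduce the accounting of Corollary~\ref{thm:complexity} verbatim: $\ee[m_k] \leq p_{k-1} S_1^{(k-1)} + 2(1-p_{k-1}) S_2^{(k-1)} = O(\sigma^2/\epsilon^2) + O((1-p_{k-1})L^2\ee\|\vv_{k-1}-\vv_{k-2}\|^2/(p_{k-1}^2\epsilon^2))$, and then use the freshly established $\ee\|\vv_{k-1}-\vv_{k-2}\|^2 = O(\|\vu_0-\vu^*\|^2/k^2)$ together with $p_{k-1}=\Theta(1/k)$ to conclude $\ee[m_k] = O((\sigma^2+L^2\|\vu_0-\vu^*\|^2)/\epsilon^2)$. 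Summing over the $N = O(L\|\vu_0-\vu^*\|/\epsilon)$ iterations yields the claimed expected oracle complexity $O(\sigma^2 L\|\vu_0-\vu^*\|/\epsilon^3 + L^3\|\vu_0-\vu^*\|^3/\epsilon^3)$.
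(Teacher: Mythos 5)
The overall architecture of your plan matches the paper closely: you use the same potential $\cv_k$ from \citet{tran2021halpern}, derive the same one-step inequality (this is the paper's Lemma~\ref{lemma:mono-diff}, proved under Assumption~\ref{asp:para-mono}), invoke the same black-box $\eta_k$-monotonicity lemma (Lemma~\ref{lemma:mono-eta}), lower-bound the potential the same way using the SVI property of $\vu^*$ together with monotonicity and Young's inequality, telescope, and carry out the same oracle-query accounting for the complexity.

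The substantive flaw is the claim that a coupled induction between ``variance is $\epsilon^2/k$'' and ``potential is tame'' is the main obstacle. It isn't: Corollary~\ref{cor:variance-bound} establishes $\ee\|\tF(\vv_j)-F(\vv_j)\|^2\leq\epsilon^2/j$ unconditionally, because Algorithm~\ref{alg:monotone} chooses the batch size $S_2^{(k)}$ \emph{adaptively} as a function of the observed $\|\vv_k-\vv_{k-1}\|^2$, so the term $L^2\|\vv_k-\vv_{k-1}\|^2/S_2^{(k)}$ in Lemma~\ref{lemma:recursive-variance-bound} is bounded \emph{pathwise} by $p_k^2\epsilon^2/8$ regardless of how large the iterate gap is. No bound on $\ee\|\vv_k-\vv_{k-1}\|^2$ is required. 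Consequently, the paper proves Eq.~\eqref{ineq:mono-rate} directly by telescoping the difference inequality and plugging in the variance bound, with no induction on iterates at all. This differs structurally from the cocoercive case (Theorem~\ref{thm:cocoercive}), where the Young-inequality step injects an additional $\tfrac{i}{8L(i+1)}\|F(\vu_{i-1})\|^2$ term into the right-hand side of the potential difference, forcing an induction on $\ee\|F(\vu_i)\|^2$. In the two-step monotone analysis, that term is absorbed by the negative quadratic $-L^2(\theta A_k/(M\eta_k^2)-c_{k+1})\|\vu_{k+1}-\vv_k\|^2$, which is exactly why the extrapolation and the $c_kL^2\|\vu_k-\vv_{k-1}\|^2$ correction to the potential exist. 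The bound $\ee\|\vv_k-\vv_{k-1}\|^2=\co(\|\vu_0-\vu^*\|^2/k^2)$ is genuinely needed, but only in the \emph{subsequent} oracle-count lemma (Lemma~\ref{coro:mono-complexity}), which is proved \emph{after} and \emph{using} the already-established rate bound; there is no circular dependency to break. Your plan would still reach the correct conclusion (the variance statement you try to re-derive inductively is a freebie), but it mislocates where the real difficulty lies: the difficulty is in choosing $\theta$, $\eta_0$, and $c_k=A_k$ so that both negative quadratics have the right sign under the $\eta_k$ recursion, not in any coupling between the variance and potential estimates.

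As a small point, the parameter scalings you state are internally inconsistent: the $a_k$ in your schematic inequality is the paper's $A_k$, and with $B_k=(k+1)B_0$ and $A_k=B_k\eta_k/(2\lambda_k)$ one gets $A_k=\Theta(k^2)$, not $\Theta(k)$. With $A_k=\Theta(k^2)$, the stochastic residual sums to $\Theta(k^2\epsilon^2)$ and dividing by $A_k$ gives the claimed $\Theta(\epsilon^2)$; your stated $\Theta(k)$ would not balance.
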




\section{Faster Convergence Under a Sharpness Condition} \label{sec:str_monotone}

We now show that by restarting Algorithm~\ref{alg:monotone}, we can achieve the $\co\left(\frac{1}{\epsilon^2}\log\frac{1}{\epsilon}\right)$ oracle complexity under a milder than strong monotonicity {\em $\mu$-sharpness condition}: for all $\vu \in \cu$, $\innp{F(\vu) - F(\vu^*), \vu - \vu^*} \geq \mu\norm{\vu - \vu^*}^2$. The scheme is summarized in Algorithm~\ref{alg:sharp}, and the proof is deferred to Appendix~\ref{appx:str_monotone}.

\begin{algorithm}[ht]
\caption{Restarted Extrapolated Stochastic Halpern-Sharp (Restarted E-Halpern)
}\label{alg:sharp}

\textbf{Input:} $\vv_{-1} = \vu_0 \in \rr^d,$ $\|\vu_0 - \vu^*\|,$ $0 < \eta_0 \leq \frac{1}{3\sqrt{3}L},$ $L,$ $\mu$, $\epsilon > 0$, $\sigma$\;

\textbf{Initialize:} $M = 9L^2,$ $\underline{\eta}=\frac{\eta_{0}\left(1-2 M \eta_{0}^{2}\right)}{1-M \eta_{0}^{2}},$ $N= \Big\lceil\log\Big(\frac{\sqrt{6}\norm{\vu_0 - \vu^*}}{2\epsilon}\Big)\Big\rceil$\;

\For{$k = 1:N$}
{

Call Algorithm~\ref{alg:monotone} with initialization $\vv_{-1}^{(k)} = \vu_0^{(k)} = \vu_{k - 1},$ $\epsilon_k = \frac{\mu\epsilon\sqrt{M\underline{\eta}^2}}{2\sqrt{5\left(1 + M\underline{\eta}\eta_0\right)}},$ and $S_1^{(-1)} = S_1^{(0)} = \lceil\frac{8\sigma^2}{\epsilon_k^2}\rceil$, for $K = \Big\lceil\frac{4\sqrt{L^2\eta_0\underline{\eta} + 1}}{\mu\underline{\eta}}\Big\rceil$ iterations, and return $\vu_k$\;

}
\textbf{Return:} $\vu_N$
\end{algorithm}


\begin{restatable}{theorem}{complexityStrong}\label{thm:complexity-sharp}
Given $F$ that is $L$-Lipschitz and $\mu$-sharp and the precision parameter $\epsilon$, Algorithm~\ref{alg:sharp} outputs $\vu_N$ with $\ee[\norm{\vu_N - \vu^*}^2] \leq \epsilon^2$ as well as $\ee\big[\norm{F(\vu_N)}^2\big] \leq L^2\epsilon^2$ after $N=\co\left(\frac{L}{\mu}\log\frac{\norm{\vu_0 - \vu^*}}{\epsilon}\right)$ 
iterations with at most $\co\Big(\frac{\sigma^2(\mu + L)\log(\norm{\vu_0 - \vu^*}/ \epsilon) + L^3\norm{\vu_0 - \vu^*}^2}{\mu^3\epsilon^2}\Big)$ queries to $\hF$
in expectation.
\end{restatable}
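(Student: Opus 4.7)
The plan is to iterate Theorem~\ref{thm:mono-rate} across the restarts of Algorithm~\ref{alg:sharp}, using $\mu$-sharpness to convert the operator-norm guarantee of each inner call into a squared-distance guarantee, so that $\ee[\norm{\vu_k - \vu^*}^2]$ contracts geometrically while the additive stochastic error is controlled by the choice of $\epsilon_k$.

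First, since we are in the unconstrained setting, $F(\vu^*)=\zeros$, and $\mu$-sharpness combined with Cauchy--Schwarz gives the pointwise bound $\mu\norm{\vu-\vu^*}\leq\norm{F(\vu)}$. Let $\vu_k$ denote the output of the $k$-th outer iteration, which invokes Algorithm~\ref{alg:monotone} starting from $\vu_{k-1}$ for $K$ inner iterations with target accuracy $\epsilon_k$. Applying Theorem~\ref{thm:mono-rate} conditionally on $\vu_{k-1}$ and then taking total expectations yields, with $D_k^2\defeq\ee[\norm{\vu_k-\vu^*}^2]$,
\begin{equation*}
D_k^2 \;\leq\; \frac{\ee[\norm{F(\vu_k)}^2]}{\mu^2} \;\leq\; \frac{4(L^2\eta_0\underline{\eta}+1)}{\mu^2\underline{\eta}^2(K+1)(K+2)}\,D_{k-1}^2 \;+\; \frac{\Lambda_1\epsilon_k^2}{\mu^2}.
\end{equation*}
The choice $K=\lceil 4\sqrt{L^2\eta_0\underline{\eta}+1}/(\mu\underline{\eta})\rceil$ makes the leading coefficient at most $\tfrac14$, and the prescribed $\epsilon_k$ makes $\Lambda_1\epsilon_k^2/\mu^2=\epsilon^2/4$. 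This yields the contraction $D_k^2\leq \tfrac14 D_{k-1}^2+\tfrac14\epsilon^2$.

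Second, unrolling the contraction across $N$ restarts produces $D_N^2\leq 4^{-N}D_0^2+\epsilon^2/3$; with $N=\lceil\log(\sqrt 6\norm{\vu_0-\vu^*}/(2\epsilon))\rceil$ (sufficiently many restarts), the first term is dominated by the second and $D_N^2\leq\epsilon^2$. The operator-norm bound $\ee[\norm{F(\vu_N)}^2]\leq L^2\epsilon^2$ then follows from $L$-Lipschitzness applied to $F(\vu_N)-F(\vu^*)=F(\vu_N)$.

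Third, to count stochastic queries, I would reuse the per-inner-iteration expected batch-size analysis from the proof of Theorem~\ref{thm:mono-rate}: for each inner iteration $j$ of a given restart, $\ee[m_j]=\co\big(\sigma^2/\epsilon_k^2+L^2\ee[\norm{\vv_j-\vv_{j-1}}^2]/(p_j^2\epsilon_k^2)\big)$, and the two-step analogue of Lemma~\ref{lemma:uk} combined with the inductive operator-norm bound from Theorem~\ref{thm:mono-rate} yields $\ee[\norm{\vv_j-\vv_{j-1}}^2]=\co(D_{k-1}^2/j^2)$. Summing over the $K=\co(L/\mu)$ inner iterations gives $\co(K\sigma^2/\epsilon_k^2+K L^2 D_{k-1}^2/\epsilon_k^2)$ queries per restart, plus $\co(\sigma^2/\epsilon_k^2)$ initial-batch queries. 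Plugging in $\epsilon_k^2=\Theta(\mu^2\epsilon^2)$ and summing over the $N=\co(\log(\norm{\vu_0-\vu^*}/\epsilon))$ restarts, while using the telescoping bound $\sum_{k=0}^{N-1}D_k^2=\co(\norm{\vu_0-\vu^*}^2)$ that follows from the geometric contraction (together with the regime $\epsilon\leq\norm{\vu_0-\vu^*}$), produces the claimed total expected query bound.

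The main obstacle lies in Step three: the batch sizes $S_2^{(j)}$ are random and depend on successive iterate distances within a restart, while the ``initial distance'' to $\vu^*$ at the start of each restart is itself random through the prior restarts. A careful tower-rule argument is required to propagate conditional expectations both \emph{within} a restart (to get the $\co(D_{k-1}^2/j^2)$ successive-iterate bound for the two-step scheme, conditional on the initialization) and \emph{across} restarts (to get $\sum D_k^2=\co(D_0^2)$), mirroring the conditional-expectation argument used in the proof of Corollary~\ref{thm:complexity}.
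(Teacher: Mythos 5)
Your proposal follows essentially the same route as the paper's proof: apply Theorem~\ref{thm:mono-rate} conditionally on each restart's initialization, convert the operator-norm guarantee to a squared-distance contraction via $\mu$-sharpness (with exactly the paper's choices of inner-loop length $K$ and target accuracy $\epsilon_k$), unroll to obtain $\ee[\norm{\vu_N-\vu^*}^2]\leq\epsilon^2$, and telescope the per-restart oracle counts using the geometric decay of $D_k^2$. One small imprecision: the successive-iterate bound you state, $\ee[\norm{\vv_j-\vv_{j-1}}^2]=\co(D_{k-1}^2/j^2)$, should also carry the additive $\co(\epsilon_k^2/L^2)$ floor inherited from the $\Lambda_1\epsilon_k^2$ term in Theorem~\ref{thm:mono-rate} (as in the proof of Lemma~\ref{coro:mono-complexity}); that extra term contributes $\co(L^3/\mu^3)$ queries per restart, which is still absorbed by the claimed bound in the regime $\epsilon\leq\norm{\vu_0-\vu^*}$, so your conclusion stands.
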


\section{Numerical experiments and discussion}\label{sec:num-exp}
We now illustrate the empirical performance of stochastic Halpern iteration on robust least square problems. 
Specifically, given data matrix $\mA \in \mathbb{R}^{n \times d}$ and noisy observation vector $\vb \in \mathbb{R}^n$ subject to bounded deterministic perturbation $\delta$ with $\norm{\delta} \leq \rho$, robust least square (RLS) minimizes the worst-case residue as $\min_{\vx \in \mathbb{R}^d} \max_{\delta:\norm{\delta} \leq \rho} \norm{\mA\vx  - \vy}^2_2$ with $\vy = \vb + \delta$~\citep{el1997robust}. We consider solving \ref{def:MI} induced from RLS with Lagrangian relaxation
where $\vu = (\vx, \vy)^T$ and $F(\vu) = \big(\nabla_{\vx}L_{\lambda}(\vx, \vy), -\nabla_{\vy}L_{\lambda}(\vx, \vy)\big)^T$ for $L_{\lambda}(\vx, \vy) = \frac{1}{2n}\norm{A\vx - \vy}^2_2 - \frac{\lambda}{2n}\norm{\vy - \vb}_2^2$. 
We use a real-world superconductivity dataset \citep{HAMIDIEH2018346} from UCI Machine Learning Repository \citep{Dua:2019} for our experiment, which is of size $21263 \times 81$. 
To ensure the problem is concave in $\vy,$ we need that $\lambda >1;$ in the experiments, we set $\lambda = 1.5$.
\begin{figure}[!ht]
    \centering
    \subfigure[Comparison on superconductivity dataset.]{\includegraphics[width=0.48\textwidth]{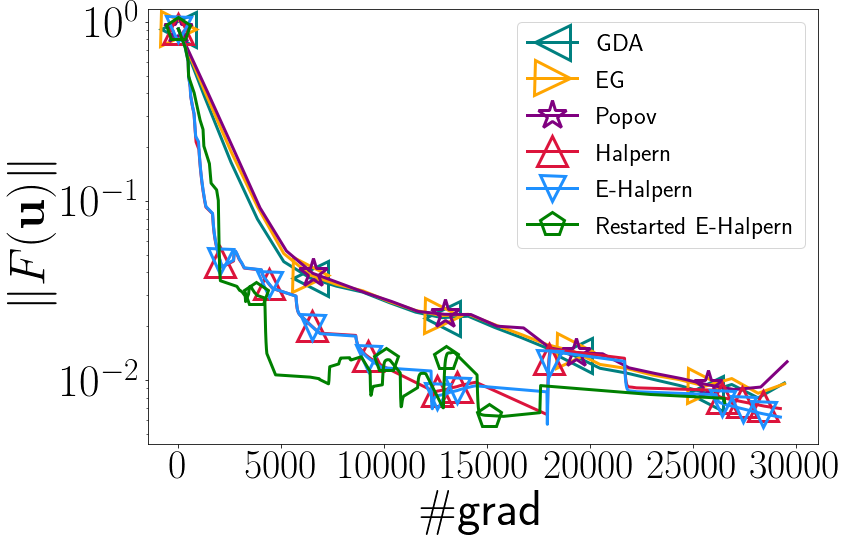}\label{fig:exp1}}
    \subfigure[E-Halpern with different stochastic estimators.]{\includegraphics[width=0.48\textwidth]{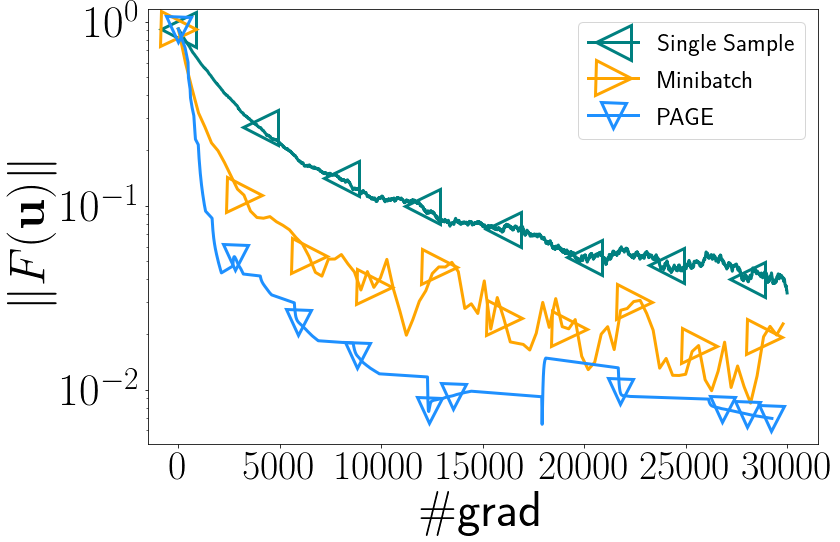}\label{fig:exp2}}
    \caption{Empirical comparison of min-max algorithms on the robust least squares problem.
    }
    \label{fig:exp}
\end{figure}
For the experiment, we compare Halpern, E-Halpern, and Restarted E-Halpern algorithms with gradient descent-ascent (GDA), extragradient (EG)~\citep{korpelevich1977extragradient}, and Popov's method~\citep{Popov1980}
in stochastic settings. Even though our theoretical results for Restarted E-Halpern require scheduled restarts based on known problem parameters, in the implementation, to avoid complicated parameter tuning and illustrate empirical performance, we restart E-Halpern whenever the norm of stochastic estimator $\tF$ used in E-Halpern halves. All Halpern variants are implemented with PAGE estimator considered in our paper; all other algorithms are implemented using minibatches. Additionally, we compare E-Halpern with the PAGE estimator against E-Halpern with  single-sample and mini-batch estimators. 

We report and plot the (empirical) operator norm $\|F(\vu)\|$ against the number of stochastic operator evaluations. 
Note that evaluations of $\|F(\vu)\|$ are only used for plotting but not for running any of the algorithms. 
We use the same random initialization and tune 
the batch sizes and step sizes 
(to the values achieving fastest convergence under noise) for each method by grid search. We use constant batch sizes and constant step sizes for GDA, EG, and Popov. We also choose the batch sizes of PAGE estimator to ensure $\ee[\|F(\vu_k) - \tF(\vu_k)\|^2] \leq \co(\frac{1}{k})$, which handles error accumulation \citep{lee2021fast} and early stagnation of stochastic Halpern iteration. We implement all the algorithms in Python and run each algorithm using one CPU core on a macOS machine with Intel 2.3GHz Dual Core i5 Processor and 8GB RAM.\footnote{Code is available at \href{https://github.com/zephyr-cai/Halpern}{https://github.com/zephyr-cai/Halpern}.} 

We observe that (i) in Figure~\ref{fig:exp1} both Halpern and E-Halpern exhibit faster convergence to approximate stationary points (with much smaller gradient norm after same number of gradient evaluations) than other algorithms, and restarting E-Halpern  provides additional speedup, validating our theoretical insights; 
(ii) in Figure~\ref{fig:exp2}, E-Halpern with PAGE estimator displays faster convergence compared to other two estimators, in agreement with our theoretical analysis.

\section{(Near) Tightness of Stochastic Oracle Complexity Bounds}\label{appx:lower-bound-reductions}
In this section, we briefly discuss lower bound reductions which imply that our results for Lipschitz sharp setups are unimprovable in terms of the dependence on $\epsilon.$
To keep the discussion simple, we only focus on the $\epsilon$ dependence here and unconstrained settings. The near-optimality of our bounds 
is implied by the known lower bound for the optimality gap in $L$-smooth $\mu$-strongly convex stochastic optimization, which is of the order $\Omega(\frac{\sigma^2}{\mu\epsilon})$ in the high noise $\sigma^2$ or low error $\epsilon$ regimes; see, for example, the discussion in~\citet{ghadimi2016accelerated} (the omitted part of the lower bound comes from the deterministic complexity of smooth strongly convex optimization and is less interesting in our context).  The same lower bound implies a lower bound of $\Omega(\frac{\sigma^2}{\epsilon^2})$ for minimizing the gradient of a smooth strongly convex function $f$. Suppose not (for the purpose of contradiction); i.e., suppose that there were an algorithm that constructs a point $\vx$ with $\ee[\|\nabla f(\vx)\|^2] \leq \Bar{\epsilon}^2$ in $o(\frac{\sigma^2}{\bar{\epsilon}^2})$ oracle queries to the stochastic gradient. By $\mu$-strong convexity of $f,$ this would imply that we get $\ee[f(\vx) - \min_{\vu}f(\vu)] \leq \frac{1}{2\mu}\ee[\|\nabla f(\vx)\|^2] \leq \frac{\bar{\epsilon}^2}{2\mu}$ with $o(\frac{\sigma^2}{\bar{\epsilon}^2})$ oracle queries to the stochastic gradient. Setting $\bar{\epsilon} = \sqrt{{\epsilon}{\mu}},$ we get that this would imply oracle complexity $o(\frac{\sigma^2}{\mu \epsilon})$, and we reach a contradiction on the lower bound for the optimality gap. 

Hence, $\Omega(\frac{\sigma^2}{\epsilon^2})$ lower bound applies to the minimization of the gradient of smooth strongly convex functions in stochastic regimes. 
Observe that the gradients of smooth strongly convex functions are Lipschitz and strongly monotone (thus also sharp), so a lower bound for this problem class implies a lower bound for the class of sharp Lipschitz monotone inclusion problems. Thus, we can conclude that our result from Section~\ref{sec:str_monotone} for sharp Lipschitz monotone inclusion problems that gives $\co\Big(\frac{\sigma^2(\mu + L)\log(\norm{\vu_0 - \vu^*}/ \epsilon) + L^3\norm{\vu_0 - \vu^*}^2}{\mu^3\epsilon^2}\Big)$ stochastic oracle complexity  is near-optimal in terms of the dependence on $\sigma$ and $\epsilon$ (but likely not near-optimal in terms of the dependence on the remaining problem parameters).


\section{Conclusion}
We introduced stochastic variance reduced variants of Halpern iteration for addressing monotone inclusion problems. Our work addresses all standard classes of Lipschitz monotone problems and achieves improved stochastic oracle complexity guarantees, all for the last iterate. Subsequent to this work, \cite{chen2022near} obtained near-optimal bounds for the cases considered in this work, by reducing the Lipschitz monotone case to the Lipschitz strongly monotone case, using regularization. It is an open question to obtain such near-optimal bounds with a direct method, without the use of regularization.

\section*{Acknowledgements}

XC and CS were supported in part by the NSF grant 2023239. 
CG's research was partially supported by
INRIA Associate Teams project, FONDECYT 1210362 grant, ANID Anillo ACT210005 grant, and National Center for Artificial Intelligence CENIA FB210017, Basal ANID. Part of this work was done while CG was at the University of Twente. JD was supported by the NSF grant 2007757, by the Office of Naval Research under contract number N00014-22-1-2348, and by the Wisconsin Alumni Research Foundation. Part of this work was done while JD and CS were visiting Simons Institute for the Theory of Computing.

\bibliographystyle{plainnat}
\bibliography{reference}

\appendix
\newpage

\section{Omitted proofs from Section~\ref{sec:prelim}}\label{appx:prelim-omitted}
\PAGElemma*
\begin{proof}
Using the definition of $\tF,$ conditional on $\cf_{k-1}$, we have for all $k \geq 1$ 
\begin{equation*}
\begin{aligned}
    \;& \ee\Big[\norm{\tF(\vu_{k}) - F(\vu_{k})}^2 \Big| \cf_{k-1}\Big] \\
    = \;& p_k\ee\Big[\Big\|\frac{1}{S_1^{(k)}} \sum_{i = 1}^{S_1^{(k)}} \hF(\vu_{k}, z^{(k)}_i) - F(\vu_{k})\Big\|^2 \Big| \cf_{k-1}\Big] \\
    & + (1 - p_k)\ee\Big[\Big\|\tF(\vu_{k - 1}) + \frac{1}{S_2^{(k)}} \sum_{i = 1}^{S_2^{(k)}} \left(\hF(\vu_{k}, z_i^{(k)}) - \hF(\vu_{k - 1}, z_i^{(k)})\right) - F(\vu_{k})\Big\|^2 \Big| \cf_{k-1}\Big], 
\end{aligned}
\end{equation*}
where $\cf_{k - 1} = \sigma(\{\tF(\vu_j)\}_{j \leq {k - 1}})$ is the natural filtration, as defined in Section~\ref{sec:prelim}. Note that both $\vu_{k - 1} \in \cf_{k - 1}$ and $\vu_k \in \cf_{k - 1}$ by the updating scheme considered in this paper, so we have 
\begin{equation}\label{eq:recursive-variance-proof}
\begin{aligned}
    \;& \ee\Big[\norm{\tF(\vu_{k}) - F(\vu_{k})}^2 \Big| \cf_{k-1}\Big] \\
    = \;& p_k\underbrace{\ee_{z^{(k)}}\Big[\Big\|\frac{1}{S_1^{(k)}} \sum_{i = 1}^{S_1^{(k)}} \hF(\vu_{k}, z^{(k)}_i) - F(\vu_{k})\Big\|^2\Big]}_{\mathcal{T}_1} \\
    & + (1 - p_k)\underbrace{\ee_{z^{(k)}}\Big[\Big\|\tF(\vu_{k - 1}) + \frac{1}{S_2^{(k)}} \sum_{i = 1}^{S_2^{(k)}} \left(\hF(\vu_{k}, z_i^{(k)}) - \hF(\vu_{k - 1}, z_i^{(k)})\right) - F(\vu_{k})\Big\|^2\Big]}_{\mathcal{T}_2}.
\end{aligned}
\end{equation}
Here we use $\ee_{z^{(k)}}$ to denote taking expectation with respect to the randomness of random seeds $z^{(k)}_i \overset{\text{i.i.d.}}{\sim} P_z$ sampled at iteration $k$. 

For the term $\mathcal{T}_1$, we have
\begin{equation}\label{ineq:ff-update}
\begin{aligned}
    \;& \ee_{z^{(k)}} \Big[ \Big\| \frac{1}{S_1^{(k)}} \sum_{i = 1}^{S_1^{(k)}} \hF(\vu_{k}, z^{(k)}_i) - F(\vu_{k}) \Big\|^2 \Big] \\
    \overset{(\romannumeral1)}{=} \;& \ee_{z^{(k)}} \Big[ \frac{1}{\big(S_1^{(k)}\big)^2} \sum_{i = 1}^{S_1^{(k)}} \norm{\hF(\vu_{k}, z^{(k)}_i) - F(\vu_{k})}^2 \Big] \leq \frac{\sigma^2}{S_1^{(k)}},
\end{aligned}
\end{equation}
where $(\romannumeral1)$ is due to $z_i^{(k)} \overset{\text{i.i.d.}}{\sim} P_z$ and $\ee\left[\hF(\vu_{k}, z^{(k)}_i)\right] = F(\vu_{k})$. 

For the term $\mathcal{T}_2$, we have 
\begin{equation*}
\begin{aligned}
    \;& \ee_{z^{(k)}}\Big[\Big\|\tF(\vu_{k - 1}) + \frac{1}{S_2^{(k)}} \sum_{i = 1}^{S_2^{(k)}} \left(\hF(\vu_{k}, z_i^{(k)}) - \hF(\vu_{k - 1}, z_i^{(k)})\right) - F(\vu_{k})\Big\|^2\Big] \\
    \overset{(\romannumeral1)}{=} \;& \ee_{z^{(k)}} \Big[\frac{1}{\big(S_2^{(k)}\big)^2}\Big\| \sum_{i = 1}^{S_2^{(k)}} \left[ \left(\hF(\vu_{k}, z_i^{(k)}) - \hF(\vu_{k - 1}, z_i^{(k)})\right) - \left(F(\vu_k) - F(\vu_{k - 1})\right) \right] \Big\|^2 \Big] \\
    & + \ee_{z^{(k)}} \Big[ \Big\| \tF(\vu_{k - 1}) - F(\vu_{k - 1})\Big\|^2\Big] \\
    \overset{(\romannumeral2)}{=} \;& \ee_{z^{(k)}} \Big[ \frac{1}{\big(S_2^{(k)}\big)^2}\sum_{i = 1}^{S_2^{(k)}} \Big\| \hF(\vu_{k}, z_i^{(k)}) - \hF(\vu_{k - 1}, z_i^{(k)}) - \left(F(\vu_k) - F(\vu_{k - 1})\right) \Big\|^2\Big] \\
    & + \ee_{z^{(k)}} \Big[ \Big\|\tF(\vu_{k - 1}) - F(\vu_{k - 1})\Big\|^2 \Big],
\end{aligned}
\end{equation*}
where $(\romannumeral1)$ and $(\romannumeral2)$ can be verified by expanding the square norm and using the assumption that all $z_i^{(k)}$ are i.i.d.~and $\hF(\vx, z_i^{(k)})$ is unbiased.  Since $\ee[\norm{X - \ee X}^2] \leq \ee[\norm{X}^2]$ for any random variable $X$, and using Assumption~\ref{assmpt:Lipschitz} for the stochastic queries, we have 
\begin{equation*}
    \begin{aligned}
    \;& \ee_{z^{(k)}} \Big[ \frac{1}{\big(S_2^{(k)}\big)^2}\sum_{i = 1}^{S_2^{(k)}} \Big\| \hF(\vu_{k}, z_i^{(k)}) - \hF(\vu_{k - 1}, z_i^{(k)}) - \left(F(\vu_k) - F(\vu_{k - 1})\right) \Big\|^2\Big] \\
    \leq \;& \frac{1}{\big(S_2^{(k)}\big)^2}\sum_{i = 1}^{S_2^{(k)}} \ee_{z_i^{(k)}}\Big[\norm{ \hF(\vu_{k}, z_i^{(k)}) - \hF(\vu_{k - 1}, z_i^{(k)}) }^2\Big] 
    \leq \frac{L^2\norm{\vu_k - \vu_{k - 1}}^2}{S_2^{(k)}}.
    \end{aligned}
\end{equation*}
So we obtain 
\begin{equation}\label{ineq:fs-update}
\begin{aligned}
    \;& \ee_{z^{(k)}} \Big[ \Big\| \tF(\vu_{k - 1}) + \frac{1}{S_2^{(k)}} \sum_{i = 1}^{S_2^{(k)}} \left(\hF(\vu_{k}, z_i^{(k)}) - \hF(\vu_{k - 1}, z_i^{(k)})\right) - F(\vu_{k}) \Big\|^2 \Big] \\
    \leq \;& \norm{\tF(\vu_{k - 1}) - F(\vu_{k - 1})}^2 + \frac{L^2\norm{\vu_k - \vu_{k - 1}}^2}{S_2^{(k)}}.
\end{aligned}
\end{equation}
Plugging Inequalities~\eqref{ineq:ff-update} and~\eqref{ineq:fs-update} into Eq.~\eqref{eq:recursive-variance-proof}, we have 
\begin{equation*}
\begin{aligned}
    \;& \ee\Big[\norm{\tF(\vu_{k}) - F(\vu_{k})}^2 \big| \cf_{k - 1}\Big] \\
    \leq \;& \frac{p_k \sigma^2}{S_1^{(k)}} + (1 - p_k)\norm{\tF(\vu_{k - 1}) - F(\vu_{k - 1})}^2 + \frac{(1 - p_k)L^2\norm{\vu_k - \vu_{k - 1}}^2}{S_2^{(k)}}.
\end{aligned}
\end{equation*}
Taking expectation with respect to all the randomness on both sides, and by the tower property of conditional expectations, we now obtain  
\begin{equation*}
\begin{aligned}
    \ee\Big[\norm{\tF(\vu_{k}) - F(\vu_{k})}^2\Big] \leq \;& p_k\sigma^2\ee\Big[\frac{1}{S_1^{(k)}}\Big] + (1 - p_k)\ee\Big[\norm{\tF(\vu_{k - 1}) - F(\vu_{k - 1})}^2\Big] \\
    & + (1 - p_k)L^2\ee\Big[\frac{\norm{\vu_{k} - \vu_{k - 1}}^2}{S_2^{(k)}}\Big],
\end{aligned}
\end{equation*}
which leads to the inequality in the lemma when $S_1^{(k)}$ are deterministic, thus completing the proof.
\end{proof}

\PAGEcoro*
\begin{proof}
We prove it by induction whose base step is 
\begin{equation*}
    \ee\Big[\norm{\tF(\vu_{1}) - F(\vu_{1})}^2\Big] \leq \frac{p_1\sigma^2}{S_1^{(1)}} \leq \frac{\epsilon^2}{8} \leq \epsilon^2,
\end{equation*}
where we use that $p_1 = 1$.

Assume that the result holds for all $j < k$; then by Lemma~\ref{lemma:recursive-variance-bound}, we have that at iteration $k$ 
\begin{equation*}
\begin{aligned}
    \;& \ee\Big[\norm{\tF(\vu_{k}) - F(\vu_{k})}^2\Big] \\
    \leq \;& \frac{p_k\sigma^2}{S_1^{(k)}} + (1 - p_k)\ee\Big[\norm{\tF(\vu_{k - 1}) - F(\vu_{k - 1})}^2\Big] + (1 - p_k)L^2\ee\Big[\frac{\norm{\vu_{k} - \vu_{k - 1}}^2}{S_2^{(k)}}\Big].
\end{aligned}
\end{equation*}
Plugging in our choice of $p_k$, $S_1^{(k)}$ and $S_2^{(k)}$, we have 
\begin{equation*}
\begin{aligned}
    \ee\Big[\norm{\tF(\vu_{k}) - F(\vu_{k})}^2\Big] \leq \;& \frac{p_k^2\epsilon^2}{8} + \frac{(1 - p_k)\epsilon^2}{k - 1} + \frac{p_k^2(1 - p_k)\epsilon^2}{8} \\
    \overset{(\romannumeral1)}{\leq} \;& \frac{p_k^2\epsilon^2}{4} + \frac{(1 - p_k)\epsilon^2}{k - 1} 
    = \left(\frac{1}{(k + 1)^2} + \frac{1}{k + 1}\right)\epsilon^2 
    \overset{(\romannumeral2)}{\leq} \frac{\epsilon^2}{k},
\end{aligned}
\end{equation*}
where $(\romannumeral1)$ is due to $\frac{p_k^2(1 - p_k)\epsilon^2}{8} \leq \frac{p_k^2\epsilon^2}{8}$, and $(\romannumeral2)$ is because $k(k + 2) \leq (k + 1)^2$. Hence, by induction, we can conclude that the result holds for all $k \geq 1$.
\end{proof}

\section{Omitted proofs from Section~\ref{sec:cocoercive}}\label{appx:omitted-cocoercive}

\subsection{Unconstrained settings}\label{appxSec:noVR}
Our argument for bounding the total number of stochastic queries to $F$ is based on the use of the following potential function, which was previously used for the deterministic case of Halpern iteration in \citep{diakonikolas2020halpern,diakonikolas2021potential},
\begin{equation}\label{eq:potential-function}
    \cc_{k} = \frac{A_k}{L_k}\lVert F(\vu_k) \rVert^2 + B_k\innp{F(\vu_k), \vu_k - \ui}, 
\end{equation}
where $\{ A_{k} \}_{k \geq 1}$ and $\{ B_{k} \}_{k \geq 1}$ are positive and non-decreasing sequences of real numbers, while the step size $\lambda_k$ is defined by $\lambda_k := \frac{B_k}{A_k + B_k}$. 
We start the proof by first justifying that a bound on the chosen potential function $\cc_k$ leads to a bound on $\|F(\vu_k)\|$ in expectation. The proof is a simple extension of~\citep[Lemma~4]{diakonikolas2020halpern} and is provided for completeness. 
\begin{restatable}{lemma}{errorCoco}
\label{lemma:error}
Given $k \geq 1,$ let $\cc_k$ be defined as in Eq.~\eqref{eq:potential-function} and let $\um$ be a solution to the monotone inclusion problem corresponding to $F$. If $\ee\left[\cc_k\right] \leq \ee\left[\ce_k\right]$ for some error term $\ce_k$, then 
  \begin{equation}
      \ee\left[\norm{F(\vu_k)}^2\right] \leq \frac{B_k L_k}{A_k}\norm{\ui - \vu^*}\ee\left[\norm{F(\vu_k)}\right] + \frac{L_k}{A_k}\ee\left[\ce_k\right],
  \end{equation}
  where the expectation is taken with respect to all random queries to $F$. 
\end{restatable}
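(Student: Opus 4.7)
The plan is to exploit the Minty variational inequality at the solution $\vu^*$ to replace the linear term $\innp{F(\vu_k), \vu_k - \vu_0}$ with something that can be absorbed by the quadratic $\|F(\vu_k)\|^2$ term, then take expectations. Since we are in the unconstrained case here and $\vu^*$ is a zero of $F$, monotonicity of $F$ gives $\innp{F(\vu_k), \vu_k - \vu^*} = \innp{F(\vu_k) - F(\vu^*), \vu_k - \vu^*} \geq 0$ (more generally, this follows from the \eqref{def:MVI} characterization of solutions guaranteed by Minty's theorem, which is already cited in the preliminaries).

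First, I would split the inner product in the definition of $\cc_k$ as
\begin{equation*}
\innp{F(\vu_k), \vu_k - \vu_0} = \innp{F(\vu_k), \vu_k - \vu^*} + \innp{F(\vu_k), \vu^* - \vu_0} \geq \innp{F(\vu_k), \vu^* - \vu_0},
\end{equation*}
where the inequality is the monotonicity/Minty step above. Substituting back into \eqref{eq:potential-function} and applying Cauchy--Schwarz to the remaining inner product, I obtain the deterministic bound
\begin{equation*}
\frac{A_k}{L_k}\|F(\vu_k)\|^2 \leq \cc_k + B_k \|\vu_0 - \vu^*\|\,\|F(\vu_k)\|.
\end{equation*}

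Finally, I would take expectations on both sides, use the hypothesis $\ee[\cc_k] \leq \ee[\ce_k]$, and multiply through by $L_k/A_k$ (both positive) to conclude the stated inequality. There is no real technical obstacle here; the only mild subtlety is that the lemma's conclusion features $\ee[\|F(\vu_k)\|]$ on the right-hand side rather than $\ee[\|F(\vu_k)\|^2]^{1/2}$, so I must be careful not to apply Jensen in the wrong direction—Cauchy--Schwarz is applied pointwise before taking expectations, which is exactly what yields the stated form.
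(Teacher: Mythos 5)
Your proof is correct and follows essentially the same approach as the paper: split $\vu_k - \vu_0$ through $\vu^*$, drop the nonnegative term $\innp{F(\vu_k), \vu_k - \vu^*}$ using that $\vu^*$ solves \eqref{def:MVI} (equivalently, monotonicity plus $F(\vu^*)=\zeros$ in the unconstrained case), apply Cauchy--Schwarz, and take expectations. The only cosmetic difference is that the paper takes expectations first and then manipulates, while you manipulate pointwise and take expectations last; your remark about why Cauchy--Schwarz rather than Jensen yields the stated form is accurate.
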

\begin{proof}
By the definition of $\cc_k$, we have
\begin{equation*}
\begin{aligned}
    \ee\left[\norm{ F(\vu_k) }^2\right] \leq \;& \frac{B_k L_k}{A_k}\ee\left[\innp{F(\vu_k), \vu_0 - \vu_{k}}\right] + \frac{L_k}{A_k}\ee\left[\ce_k\right] \\
    = \;& \frac{B_k L_k}{A_k}\ee\left[\innp{F(\vu_k), \vu_0 - \vu^* + \vu^* - \vu_{k}}\right] + \frac{L_k}{A_k}\ee\left[\ce_k\right] \\
    = \;& \frac{B_k L_k}{A_k}\ee\left[\innp{F(\vu_k), \vu_0 - \vu^*}\right] + \frac{B_k L_k}{A_k}\ee\left[\innp{F(\vu_k), \vu^* - \vu_{k}}\right] + \frac{L_k}{A_k}\ee\left[\ce_k\right].
\end{aligned}
\end{equation*}
Since $\um$ is a solution to the monotone inclusion problem, as discussed in Section~\ref{sec:prelim}, it is also a weak VI (or MVI) solution, and thus 
\begin{equation*}
    (\forall k \geq 0) \quad \innp{F(\vu_k), \um - \vu_{k}} \leq 0.
\end{equation*}
As a result, 
\begin{equation*}
    \begin{aligned}
        \ee\left[\norm{ F(\vu_k) }^2\right] \leq \;& \frac{B_k L_k}{A_k}\ee\left[\innp{F(\vu_k), \ui - \um}\right] + \frac{L_k}{A_k}\ee\left[\ce_k\right] \\
        \overset{(\romannumeral1)}{\leq} \;& \frac{B_k L_k}{A_k}\ee\left[\norm{F(\vu_k)}\norm{\ui - \um}\right] + \frac{L_k}{A_k}\ee\left[\ce_k\right] \\
        \overset{(\romannumeral2)}{=} \;& \frac{B_k L_k}{A_k}\norm{\ui - \um}\ee\left[\norm{F(\vu_k)}\right] + \frac{L_k}{A_k}\ee\left[\ce_k\right],
    \end{aligned}
\end{equation*}
where we use Cauchy-Schwarz inequality for $(\romannumeral1)$, while $(\romannumeral2)$ holds because $\norm{\ui - \um}$ involves no randomness.
\end{proof}

Using Lemma \ref{lemma:error}, our goal now is to show that we can provide a bound on $\ee[\cc_k]$ by appropriately choosing the algorithm parameters. In the deterministic setup, it is sufficient to choose $L_k = \co(L)$ and $\lambda_k = \co(\frac{1}{k})$ to ensure that $\{ A_{k}\cc_k \}_{k \geq 1}$ is monotonically non-increasing, which immediately leads to  $\cc_k \leq \frac{A_1}{A_k}\cc_1$. In the stochastic setup considered here, we follow the same motivation, but need to deal with additional error terms caused by the stochastic access to $F$. 

We assume throughout that $L$ is known, and make the following assumption on the choice of $\{ A_{k} \}_{k \geq 1}$, $\{ B_{k} \}_{k \geq 1}$, and $\{ L_{k} \}_{k \geq 1}$, and provide a corresponding bound on the change of $\cc_k$ in Lemma~\ref{lemma:diffCk}.
%

\begin{assumption}\label{asp:para}
 $\{ L_k \}_{k \geq 1}$ is a sequence of positive reals such that $L_k \geq L$ for all $k \in \nn$. Sequences $\{ A_{k} \}_{k \geq 1}$ and $\{ B_{k} \}_{k \geq 1}$ are positive and non-decreasing, satisfying the following for all $k \geq 2$:
\begin{equation*}
    \frac{B_{k - 1}}{A_k} = \frac{B_k}{A_k + B_k}, \qquad \frac{1}{L_k}\left(1 - \frac{2B_k}{A_k + B_k}\right) = \frac{A_{k - 1}}{A_k L_{k - 1}}.
\end{equation*}
\end{assumption}

\begin{restatable}{lemma}{diffCoco}
\label{lemma:diffCk}
Let $\cc_k$ be defined as in 
Eq.~\eqref{eq:potential-function}, where $\{ A_{k} \}_{k \geq 1}$ and $\{ B_{k} \}_{k \geq 1}$ satisfy Assumption \ref{asp:para}. Let $L_k = 2L$ for all $k \geq 1$. Then, for any $k \geq 2$, we have
\begin{equation*}
    \begin{aligned}
    \;& \cc_{k} - \cc_{k - 1} \leq \frac{A_k}{2L}\norm{ F(\vu_{k - 1}) - \tF(\vu_{k - 1}) }^2 + \frac{A_k - A_{k - 1}}{2L}\innp{F(\vu_{k - 1}), F(\vu_{k - 1}) - \tF(\vu_{k - 1})}.
    \end{aligned}
\end{equation*}
\end{restatable}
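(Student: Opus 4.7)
The plan is to follow the template of the deterministic potential-function argument of \citet{diakonikolas2020halpern,diakonikolas2021potential}, but to carefully track all places where $\tF(\vu_{k-1})$ enters in place of $F(\vu_{k-1})$, so that the residuals can be collected into the error terms appearing on the right-hand side. Throughout, I will write $\xi_{k-1} := F(\vu_{k-1}) - \tF(\vu_{k-1})$; substituting $L_{k} = L_{k-1} = 2L$ into Eq.~\eqref{eq:Halpern} gives
\begin{equation*}
\vu_k - \vu_0 = (1-\lambda_k)(\vu_{k-1}-\vu_0) - \tfrac{1-\lambda_k}{L}\tF(\vu_{k-1}), \qquad \vu_k - \vu_{k-1} = \lambda_k(\vu_0 - \vu_{k-1}) - \tfrac{1-\lambda_k}{L}\tF(\vu_{k-1}),
\end{equation*}
and I will use Assumption~\ref{asp:para} in the equivalent form $B_{k-1} = B_k(1-\lambda_k)$, $A_k\lambda_k = B_{k-1}$, and $A_k - A_{k-1} = 2B_{k-1}$ (the last coming from the second condition $1 - 2\lambda_k = A_{k-1}/A_k$ which specializes to constant $L_k = 2L$).

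First I would split $\cc_k - \cc_{k-1}$ into the quadratic part $\frac{A_k}{2L}\|F(\vu_k)\|^2 - \frac{A_{k-1}}{2L}\|F(\vu_{k-1})\|^2$ and the linear part $B_k\innp{F(\vu_k),\vu_k-\vu_0} - B_{k-1}\innp{F(\vu_{k-1}),\vu_{k-1}-\vu_0}$. On the linear part, I would substitute the identity for $\vu_k - \vu_0$ above and use $B_{k-1} = B_k(1-\lambda_k)$ to cancel the $\innp{F(\vu_{k-1}),\vu_{k-1}-\vu_0}$ contribution, leaving $B_{k-1}\innp{F(\vu_k) - F(\vu_{k-1}),\vu_{k-1}-\vu_0}$ together with a cross term $-\frac{B_{k-1}}{L}\innp{F(\vu_k),\tF(\vu_{k-1})}$. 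I would then rewrite $\tF(\vu_{k-1}) = F(\vu_{k-1}) - \xi_{k-1}$ to split that cross term into a ``deterministic'' piece and a $\xi_{k-1}$-piece. On the quadratic part, I would use $\|F(\vu_k)\|^2 - \|F(\vu_{k-1})\|^2 = 2\innp{F(\vu_k)-F(\vu_{k-1}),F(\vu_{k-1})} + \|F(\vu_k)-F(\vu_{k-1})\|^2$.

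After collecting terms, the coefficient of $\|F(\vu_{k-1})\|^2$ becomes $\frac{A_k - A_{k-1}}{2L} - \frac{B_{k-1}}{L}$, which vanishes by $A_k - A_{k-1} = 2B_{k-1}$. Next I would invoke cocoercivity, $\innp{F(\vu_k)-F(\vu_{k-1}),\vu_k - \vu_{k-1}} \geq \frac{1}{L}\|F(\vu_k)-F(\vu_{k-1})\|^2$, multiply through by $A_k$ and substitute the iteration formula for $\vu_k - \vu_{k-1}$; using $A_k\lambda_k = B_{k-1}$ this exactly upper-bounds the remaining $B_{k-1}\innp{F(\vu_k) - F(\vu_{k-1}),\vu_{k-1}-\vu_0}$ term by $-\frac{A_k}{L}\|F(\vu_k)-F(\vu_{k-1})\|^2$ plus a $-\frac{A_k(1-\lambda_k)}{L}\innp{F(\vu_k)-F(\vu_{k-1}),F(\vu_{k-1})}$ contribution (which cancels the surviving $\innp{F(\vu_k)-F(\vu_{k-1}),F(\vu_{k-1})}$ piece using the identity $A_k - B_{k-1} = A_k(1-\lambda_k)$) plus a $\xi_{k-1}$ contribution with coefficient $\frac{A_k(1-\lambda_k)}{L}$. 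At this stage I expect the bound to collapse to
\begin{equation*}
\cc_k - \cc_{k-1} \;\leq\; -\tfrac{A_k}{2L}\|F(\vu_k)-F(\vu_{k-1})\|^2 + \tfrac{A_k(1-\lambda_k) + B_{k-1}}{L}\innp{F(\vu_k)-F(\vu_{k-1}),\xi_{k-1}} + \tfrac{B_{k-1}}{L}\innp{F(\vu_{k-1}),\xi_{k-1}},
\end{equation*}
after splitting $F(\vu_k) = F(\vu_{k-1}) + (F(\vu_k)-F(\vu_{k-1}))$ in the $\frac{B_{k-1}}{L}\innp{F(\vu_k),\xi_{k-1}}$ residual, and then using the identity $A_k(1-\lambda_k) + B_{k-1} = A_k$ (which follows from $A_k\lambda_k = B_{k-1}$).

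To finish, I would apply Young's inequality $\innp{a,b}\leq \frac{1}{2}\|a\|^2 + \frac{1}{2}\|b\|^2$ to the cross term, which produces exactly $\frac{A_k}{2L}\|F(\vu_k)-F(\vu_{k-1})\|^2 + \frac{A_k}{2L}\|\xi_{k-1}\|^2$; the quadratic piece cancels the negative $-\frac{A_k}{2L}\|F(\vu_k)-F(\vu_{k-1})\|^2$, and $\frac{B_{k-1}}{L} = \frac{A_k - A_{k-1}}{2L}$ converts the surviving linear $\xi_{k-1}$-term into the stated form. The main obstacle I anticipate is purely bookkeeping: all three parameter identities ($B_{k-1} = B_k(1-\lambda_k)$, $A_k\lambda_k = B_{k-1}$, and $A_k - A_{k-1} = 2B_{k-1}$) must be used at precisely the right step, and crucially the choice $L_k = 2L$ (rather than the $L_k = L$ used in the deterministic analysis) halves the $\|F\|^2$ coefficient in $\cc_k$, leaving just enough slack for the Young's inequality step to cancel $\|F(\vu_k) - F(\vu_{k-1})\|^2$ exactly. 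Without this extra factor of $2$ in the step size, the residual would contain a positive $\|F(\vu_k)-F(\vu_{k-1})\|^2$ term that could not be absorbed.
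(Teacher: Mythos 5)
Your proposal is correct and essentially mirrors the paper's own proof: both proofs hinge on the same cocoercivity inequality, substitute the same iteration identity for $\vu_k-\vu_{k-1}$, invoke the same three parameter relations derived from Assumption~\ref{asp:para}, arrive at the identical intermediate bound $\cc_k-\cc_{k-1}\le -\frac{A_k}{2L}\|F(\vu_k)-F(\vu_{k-1})\|^2 + \frac{A_k}{L}\innp{F(\vu_k)-F(\vu_{k-1}),\,\xi_{k-1}} + \frac{A_k-A_{k-1}}{2L}\innp{F(\vu_{k-1}),\xi_{k-1}}$, and close with the same Young/quadratic-completion step $2\innp{p,q}-\|p\|^2\le\|q\|^2$. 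The only difference is cosmetic bookkeeping order (you first split $\cc_k-\cc_{k-1}$ into quadratic and linear parts and then apply cocoercivity, whereas the paper starts from cocoercivity and pushes it into the potential difference), and your closing remark about $L_k=2L$ being exactly what creates the $-\frac{A_k}{2L}$ slack is the same observation the paper makes in its techniques discussion.
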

\begin{proof}
By the definition of $\cc_{k}$, we have 
\begin{equation*}
\begin{aligned}
    \cc_{k} - \cc_{k - 1} = \;& \frac{A_k}{L_k}\norm{ F(\vu_k) }^2 + B_k\innp{F(\vu_k), \vu_k - \ui} \\
    & - \frac{A_{k - 1}}{L_{k - 1}}\norm{ F(\vu_{k - 1}) }^2 - B_{k - 1}\innp{F(\vu_{k - 1}), \vu_{k - 1} - \ui}.
\end{aligned}
\end{equation*}
Since the operator $F$ is cocoercive with parameter $\frac{1}{L}$, we have
\begin{equation*}
    \begin{aligned}
        \;& \innp{F(\vu_{k}) - F(\vu_{k - 1}), \vu_{k} - \vu_{k - 1}} \\
        \geq \;& \frac{1}{L}\norm{ F(\vu_{k}) - F(\vu_{k - 1}) }^2 \\
        = \;& \frac{1}{L_k}\norm{ F(\vu_{k}) - F(\vu_{k - 1}) }^2 + \Big(\frac{1}{L} - \frac{1}{L_k}\Big)\norm{ F(\vu_{k}) - F(\vu_{k - 1}) }^2 \\
        = \;& \frac{1}{L_k}\norm{F(\vu_{k})}^2 - \frac{2}{L_k}\innp{F(\vu_{k}), F(\vu_{k - 1})} + \frac{1}{L_k}\norm{ F(\vu_{k - 1})}^2 \\
        & + \Big(\frac{1}{L} - \frac{1}{L_k}\Big)\norm{F(\vu_{k}) - F(\vu_{k - 1})}^2.
    \end{aligned}
\end{equation*}
By rearranging, we obtain
\begin{align*}
    \frac{1}{L_k}\norm{F(\vu_{k})}^2 \leq \;& \innp{F(\vu_{k}), \vu_{k} - \vu_{k - 1} + \frac{2}{L_k}F(\vu_{k - 1})} - \innp{F(\vu_{k - 1}), \vu_{k} - \vu_{k - 1}} \\
    & - \frac{1}{L_k}\norm{ F(\vu_{k - 1}) }^2 - \left(\frac{1}{L} - \frac{1}{L_k}\right)\norm{ F(\vu_{k}) - F(\vu_{k - 1}) }^2.
\end{align*}
Multiplying $A_k$ on both sides and plugging into $\cc_k - \cc_{k - 1}$, we have
\begin{equation*}
\begin{aligned}
    \cc_{k} - \cc_{k - 1} \leq \;& \innp{F(\vu_{k}), A_k(\vu_{k} - \vu_{k - 1}) + \frac{2A_k}{L_k}F(\vu_{k - 1}) + B_k(\vu_k - \ui)} \\
    & - \innp{F(\vu_{k - 1}), A_k(\vu_{k} - \vu_{k - 1}) + B_{k - 1}(\vu_{k - 1} - \ui)} \\
    & - \Big(\frac{A_k}{L_k} + \frac{A_{k - 1}}{L_{k - 1}}\Big)\norm{ F(\vu_{k - 1}) }^2 - A_k\Big(\frac{1}{L} - \frac{1}{L_k}\Big)\norm{ F(\vu_{k}) - F(\vu_{k - 1}) }^2.
\end{aligned}
\end{equation*}
Since $\lambda_k = \frac{B_k}{A_k + B_k}$, we have 
\begin{equation*}
    \vu_{k} = \frac{B_k}{A_k + B_k}\ui + \frac{A_k}{A_k + B_k}\Big(\vu_{k - 1} - \frac{2}{L_{k}}\tF(\vu_{k - 1})\Big),
\end{equation*}
which leads to $A_k(\vu_{k} - \vu_{k - 1}) + \frac{2A_k}{L_k}F(\vu_{k - 1}) + B_k(\vu_k - \ui) =  \frac{2A_k}{L_k}\big(F(\vu_{k - 1}) - \tF(\vu_{k - 1})\big)$.
Further, as $\frac{B_{k - 1}}{A_k} = \frac{B_k}{A_k + B_k}$ by Assumption \ref{asp:para}, we have 
\begin{equation*}
    \begin{aligned}
        \;& \innp{F(\vu_{k - 1}), A_k(\vu_{k} - \vu_{k - 1}) + B_{k - 1}(\vu_{k - 1} - \ui)} \\
        = \;& A_k\innp{F(\vu_{k - 1}), \vu_{k} - \frac{B_{k - 1}}{A_k}\ui - \frac{A_k - B_{k - 1}}{A_k}\vu_{k-1}} \\
        = \;& A_k\innp{F(\vu_{k - 1}), \vu_{k} - \frac{B_k}{A_k + B_k}\ui - \frac{A_k}{A_k + B_k}\vu_{k-1}} \\
        = \;& - A_k\innp{F(\vu_{k - 1}), \frac{2A_k}{L_k(A_k + B_k)}\tF(\vu_{k - 1})}.
    \end{aligned}
\end{equation*}
Moreover, by Assumption \ref{asp:para}, we have $\frac{1}{L_k}\Big(1 - \frac{2B_k}{A_k + B_k}\Big) = \frac{A_{k - 1}}{A_k L_{k - 1}}$, so we obtain 
\begin{equation*}
\begin{aligned}
    \;& \innp{F(\vu_{k - 1}), A_k(\vu_{k} - \vu_{k - 1}) + B_{k - 1}(\vu_{k - 1} - \ui)} \\
    = \;& - A_k\innp{F(\vu_{k - 1}), \frac{2A_k}{L_k(A_k + B_k)}\tF(\vu_{k - 1})} \\
    = \;& - \innp{F(\vu_{k - 1}), \Big(\frac{A_k}{L_k} + \frac{A_{k -1}}{L_{k - 1}}\Big)\tF(\vu_{k - 1})}.
\end{aligned}
\end{equation*}
Since by hypothesis $L_k=2L$ for all $k\geq 1$,
we have 
\begin{equation*}
    \begin{aligned}
        \cc_{k} - \cc_{k - 1} \leq \;& \innp{F(\vu_{k}), \frac{A_k}{L}(F(\vu_{k - 1}) - \tF(\vu_{k - 1}))} + \innp{F(\vu_{k - 1}), \frac{A_k + A_{k - 1}}{2L}\tF(\vu_{k - 1})} \\
        & - \frac{A_k + A_{k - 1}}{2L}\norm{F(\vu_{k - 1})}^2 - \frac{A_k}{2L}\norm{ F(\vu_{k}) - F(\vu_{k - 1})}^2 \\
        \overset{(\romannumeral1)}{=} \;& \frac{A_k}{L}\innp{F(\vu_{k}) - F(\vu_{k - 1}), F(\vu_{k - 1}) - \tF(\vu_{k - 1})} - \frac{A_k}{2L}\norm{F(\vu_{k}) - F(\vu_{k - 1})}^2 \\
        & + \innp{F(\vu_{k - 1}), \frac{A_k - A_{k - 1}}{2L}\left(F(\vu_{k - 1}) - \tF(\vu_{k - 1})\right)}, 
    \end{aligned}
\end{equation*}
where $(\romannumeral1)$ is derived by rearranging and grouping terms. Using that $2\innp{p, q} - \norm{p}^2 \leq \norm{q}^2$ 
holds for any $p, q \in \rr^d$, we finally obtain 
\begin{align*}
    \cc_k - \cc_{k - 1} \leq \;& \frac{A_k}{2L}\norm{F(\vu_{k - 1}) - \tF(\vu_{k - 1})}^2 + \frac{A_k - A_{k - 1}}{2L}\innp{F(\vu_{k - 1}), F(\vu_{k - 1}) - \tF(\vu_{k - 1})},
\end{align*}
thus completing the proof.
\end{proof}

By Lemma \ref{lemma:diffCk}, if we choose $A_k = \co(k^2)$ and $B_k = \co(k)$ satisfying Assumption \ref{asp:para}, and take sufficiently large size of samples queried to ensure that $\ee\big[\big\|{F(\vu_k) - \tF(\vu_k)}\big\|^2\big] \leq \frac{\epsilon^2}{k}$ for $k \geq 0$, then we can obtain $\co(1/k)$ expected convergence rate in the norm of the operator by induction. Observe that we do not need an assumption that $\tF$ is an unbiased estimator of $F$ for any point except for the initial one; all that is needed is that the second moment of the estimation error, $\|F(\vu_k) - \tF(\vu_k)\|_2^2$, is bounded.

\rateCoco*
\begin{proof}
Observe first that the chosen sequence of numbers $A_k, B_k$ satisfies Assumption~\ref{asp:para}, and thus Lemma~\ref{lemma:diffCk} applies. 
Observe further that, by Jensen's Inequality, 
\begin{equation*}
    \ee[\norm{F(\vu_k))}] \leq \Big(\ee[\norm{F(\vu_k)}^2]\Big)^{\frac{1}{2}}.
\end{equation*}
and, thus, to prove the theorem, it suffices to show that there exists $\Lambda_0$ and $\Lambda_1$ such that for all $k \geq 1$ 
\begin{equation*}
     \Big(\ee[\norm{F(\vu_k)}^2]\Big)^{\frac{1}{2}} \leq \frac{\Lambda_0}{k} + \Lambda_1\epsilon.
\end{equation*}

We prove this claim by induction on $k$. For the base case $k = 1$, in which $\vu_1 = \vu_0 - \frac{1}{2L}\tF(\vu_0)$, we have 
\begin{equation}\label{eq:C_1-coco}
\cc_1 = \frac{1}{L}\norm{F(\vu_1)}^2 + 2\innp{F(\vu_1), \vu_1 - \vu_0} = \frac{1}{L}\Big(\norm{F(\vu_1)}^2 - \innp{F(\vu_1), \tF(\vu_0)}\Big).
\end{equation}
Further, since the operator $F$ is cocoercive with parameter $\frac{1}{L}$, it is also cocoercive with parameter $\frac{1}{2L}$, and thus we have 
\begin{equation*}
    \norm{F(\vu_1) - F(\vu_0)}^2 \leq 2L\innp{F(\vu_1) - F(\vu_0), \vu_1 - \vu_0} = \innp{F(\vu_1) - F(\vu_0), -\tF(\vu_0)}.
\end{equation*}
Expanding and rearranging the terms, we have 
\begin{equation*}
    \norm{F(\vu_1)}^2 \leq \innp{F(\vu_0), \tF(\vu_0) - F(\vu_0)} + 2\innp{F(\vu_1), F(\vu_0)} - \innp{F(\vu_1), \tF(\vu_0)}.
\end{equation*}
Recall that, by assumption, $\ee[\tF(\vu_0)] = F(\vu_0)$. Subtracting $\innp{F(\vu_1), \tF(\vu_0)}$ from both sides in the last inequality and taking expectation with respect to all the  randomness on both sides, we have 
\begin{equation*}
\begin{aligned}
    \;& \ee\Big[\norm{F(\vu_1)}^2 - \innp{F(\vu_1), \tF(\vu_0)}\Big] \\
    \leq \;& \ee\Big[\innp{F(\vu_0), \tF(\vu_0) - F(\vu_0)} + 2\innp{F(\vu_1), F(\vu_0)} - 2\innp{F(\vu_1), \tF(\vu_0)}\Big] \\
    = \;& 2\ee\Big[\innp{F(\vu_1), F(\vu_0) - \tF(\vu_0)}\Big] \\
    \overset{(\romannumeral1)}{\leq} \;& \ee\Big[\frac{1}{2}\big\|{F(\vu_1)}\big\|^2 + 2\big\|{F(\vu_0) - \tF(\vu_0)}\big\|^2\Big],
\end{aligned}
\end{equation*}
where for $(\romannumeral1)$ we use Young's inequality. Plugging into Eq.~\eqref{eq:C_1-coco}, we obtain that 
\begin{equation*}
    \ee[\cc_1] \leq \frac{1}{L}\ee\Big[\frac{1}{2}\norm{F(\vu_1)}^2 + 2\big\|F(\vu_0) - \tF(\vu_0)\big\|^2\Big].
\end{equation*}
Note that $A_1 = B_1 = 2$ and $L_1 = 2L$, by Lemma \ref{lemma:error} we have 
\begin{equation*}
\begin{aligned}
    \ee[\norm{F(\vu_1)}^2] \leq \;& \frac{B_1L_1}{A_1}\norm{\vu_0 - \vu^*}\ee[\norm{F(\vu_1)}] + \frac{L_1}{A_1}\frac{1}{L}\ee\Big[\frac{1}{2}\norm{F(\vu_1)}^2 + 2\big\|{F(\vu_0) - \tF(\vu_0)}\big\|^2\Big] \\
    = \;& 2L\norm{\vu_0 - \vu^*}\ee[\norm{F(\vu_1)}] + \ee\Big[\frac{1}{2}\norm{F(\vu_1)}^2 + 2\big\|{F(\vu_0) - \tF(\vu_0)}\big\|^2\Big].
\end{aligned}
\end{equation*}

Subtracting $\ee[\frac{1}{2}\norm{F(\vu_1)}^2]$ on both sides and using that (by Jensen's inequality) $\ee[\norm{F(\vu_1)}] \leq \big(\ee[\norm{F(\vu_1)}^2]\big)^{\frac{1}{2}}$ and (by assumption) $\ee[\|{F(\vu_0) - \tF(\vu_0)}\|^2] \leq \frac{\epsilon^2}{8}$, we have 
\begin{equation*}
    \ee[[\norm{F(\vu_1)}^2] \leq 4L\norm{\vu_0 - \vu^*}\big(\ee[\norm{F(\vu_1)}^2]\big)^{\frac{1}{2}} + \frac{\epsilon^2}{2},
\end{equation*}
which is a quadratic inequality in  $(\ee[\norm{F(\vu_1)}^2])^{\frac{1}{2}}$. Bounding the solution to this quadratic inequality by its larger root, we have 
\begin{equation*}
    \begin{aligned}
        (\ee[\norm{F(\vu_1)}^2])^{\frac{1}{2}} \leq \;& 2L\norm{\vu_0 - \vu^*} + \frac{1}{2}\sqrt{16L^2\norm{\vu_0 - \vu^*}^2 + 2\epsilon^2} \\
    \leq \;& 2L\norm{\vu_0 - \vu^*} + \frac{1}{2}(4L\norm{\vu_0 - \vu^*} + \sqrt{2}\epsilon) \\
    \leq \;& 4L\norm{\vu_0 - \vu^*} + \epsilon \\
    \leq\; & \Lambda_0 + \Lambda_1\epsilon.
    \end{aligned}
\end{equation*}
This completes the proof for the base case. Moreover, we can get a bound for $\ee[\cc_1]$ as follows 
\begin{equation*}
    \begin{aligned}
        \ee[\cc_1] \leq \;& \frac{1}{L}\ee\Big[\frac{1}{2}\norm{F(\vu_1)}^2 + 2\norm{F(\vu_0) - \tF(\vu_0)}^2\Big] \\
        \overset{(\romannumeral1)}{\leq} \;& \frac{1}{2L}\Big(24L^2\norm{\vu_0 - \vu^*}^2 + \frac{3}{2}\epsilon^2\Big) + \frac{2}{L}\frac{\epsilon^2}{8} \\
        = \;& 12L\norm{\vu_0 - \vu^*}^2 + \frac{\epsilon^2}{L},
    \end{aligned}
\end{equation*}
where $(\romannumeral1)$ can be verified by the bound we get above for $\ee[\norm{F(\vu_1)}^2]$ and by applying Young's inequality and that, by assumption, $\ee[\|{F(\vu_0) - \tF(\vu_0)}\|^2] \leq \frac{\epsilon^2}{8}$. 

For the inductive hypothesis, assume that the result holds for all $1 \leq i \leq k - 1$, and consider iteration $k$. By Lemma~\ref{lemma:diffCk}, we have for $\forall i \geq 2$
\begin{equation*}
    \begin{aligned}
        \cc_{i} - \cc_{i - 1} \leq \;& \frac{A_i}{2L}\norm{F(\vu_{i - 1}) - \tF(\vu_{i - 1})}^2 + \frac{A_i - A_{i - 1}}{2L}\innp{F(\vu_{i - 1}), F(\vu_{i - 1}) - \tF(\vu_{i - 1})} \\
        \overset{(\romannumeral1)}{\leq} \;& \frac{5i(i + 1)}{2L}\norm{F(\vu_{i - 1}) - \tF(\vu_{i - 1})}^2 + \frac{i}{8L(i + 1)}\norm{F(\vu_{i - 1})}^2,
    \end{aligned}
\end{equation*}
where we use Young's inequality and $A_i = i(i+1)$ for $(\romannumeral1)$. Taking expectation with respect to all randomness on both sides and telescoping from $i = 2$ to $k$, we obtain 
\begin{equation}
\begin{aligned}
    \ee[\cc_k] \leq \;& \ee\Big[\cc_1 + \sum_{i = 2}^{k}\Big( \frac{5i(i + 1)}{2L}\big\|{F(\vu_{i - 1}) - \tF(\vu_{i - 1})}\big\|^2 + \frac{i}{8L(i + 1)}\norm{F(\vu_{i - 1})}^2 \Big)\Big]  \\
    \leq \;& \ee\Big[\sum_{i = 2}^{k}\Big( \frac{5i(i + 1)}{2L}\norm{F(\vu_{i - 1}) - \tF(\vu_{i - 1})}^2 + \frac{i}{8L(i + 1)}\norm{F(\vu_{i - 1})}^2 \Big)\Big]  \\
    & +  12L\norm{\vu_0 - \vu^*}^2 + \frac{\epsilon^2}{L}. 
\end{aligned}\label{eq:thm-coco-Ck-bnd}
\end{equation}
Using that, by assumption, for $k\geq 1,$ $\ee[\|F(\vu_k) - \tF(\vu_k)\|^2] \leq \frac{\epsilon^2}{k},$ we further have
\begin{equation}\label{eq:thm-coco-bnd-1}
\begin{aligned}
     \ee\Big[\sum_{i = 2}^{k}\frac{5i(i + 1)}{2L}\big\|{F(\vu_{i - 1}) - \tF(\vu_{i - 1})}\big\|^2\Big] 
    \leq \;& \sum_{i = 2}^{k}\frac{5i(i + 1)}{2L}\frac{\epsilon^2}{i - 1} \\
    \overset{(\romannumeral1)}{\leq} \;& \sum_{i = 2}^{k}\frac{5(i + 1)\epsilon^2}{L} \\
    =\; & \frac{5(k + 4)(k - 1)\epsilon^2}{2L},
\end{aligned}
\end{equation}
where $(\romannumeral1)$ is because $\frac{i}{i - 1} \leq 2$ for all $i \geq 2$. By induction, we have 
\begin{equation}\label{eq:thm-coco-bnd-2}
\begin{aligned}
     \ee\Big[\sum_{i = 2}^{k}\frac{i}{8L(i + 1)}\norm{F(\vu_{i - 1})}^2\Big] 
    \overset{(\romannumeral1)}{\leq} \;& \sum_{i = 2}^{k}\frac{1}{8L}\Big(\frac{2\Lambda_0^2}{(i - 1)^2} + 2\Lambda_1^2\epsilon^2\Big)\\
    \overset{(\romannumeral2)}{\leq} \;& \frac{1}{4 L}\Big(\Lambda_0^2\frac{\pi^2}{6} + (k - 1)\Lambda_1^2\epsilon^2\Big)\\
    =\; & \frac{1}{L}\Big(\frac{\Lambda_0^2\pi^2}{24} + \frac{(k - 1)\Lambda_1^2\epsilon^2}{4}\Big),
\end{aligned}
\end{equation}
where $(\romannumeral1)$ follows from induction and $\frac{i}{i + 1} \leq 1$, and $(\romannumeral2)$ is due to $\sum_{i = 2}^{k}\frac{1}{(i - 1)^2} \leq \sum_{i = 1}^{\infty}\frac{1}{i^2} = \frac{\pi^2}{6}$. Combining Eqs.~\eqref{eq:thm-coco-Ck-bnd}--\eqref{eq:thm-coco-bnd-2}, we get 
\begin{equation*}
\begin{aligned}
    \ee[\cc_k] \leq 12L\norm{\vu_0 - \vu^*}^2 + \frac{\epsilon^2}{L} + \frac{5(k + 4)(k - 1)\epsilon^2}{2L} + \frac{1}{L}\Big(\frac{\Lambda_0^2\pi^2}{24} + \frac{(k - 1)\Lambda_1^2\epsilon^2}{4}\Big).
\end{aligned}
\end{equation*}
Applying Lemma~\ref{lemma:error} to the bound on $\cc_k$ from the last inequality, we have 
\begin{equation*}
\begin{aligned}
    \;& \ee\big[\norm{F(\vu_k)}^2\big] \\
    \leq \;& \frac{B_kL_k}{A_k}\norm{\vu_0 - \vu^*}\ee[\norm{F(\vu_k)}] \\
    & + \frac{L_k}{A_k}\Big(12L\norm{\vu_0 - \vu^*}^2 + \frac{\epsilon^2}{L} + \frac{5(k + 4)(k - 1)\epsilon^2}{2L} + \frac{\Lambda_0^2\pi^2}{24 L} + \frac{(k - 1)\Lambda_1^2\epsilon^2}{4 L}\Big) \\
    = \;& \frac{2L}{k}\norm{\vu_0 - \vu^*}\ee[\norm{F(\vu_k)}] \\
    & + \frac{1}{k(k + 1)}\Big(24L^2\norm{\vu_0 - \vu^*}^2 + 2\epsilon^2 + 5(k + 4)(k - 1)\epsilon^2 + \frac{\Lambda_0^2\pi^2}{12} + \frac{(k - 1)\Lambda_1^2\epsilon^2}{2}\Big) \\
    \overset{(\romannumeral1)}{\leq} \;& \frac{2L}{k}\norm{\vu_0 - \vu^*}\ee[\norm{F(\vu_k)}] + \Big(\frac{24L^2\norm{\vu_0 - \vu^*}^2}{k^2} + \Big(8 + \frac{\Lambda_1^2}{2(k+1)}\Big)\epsilon^2 + \frac{\Lambda_0^2\pi^2}{12 k^2}\Big),
\end{aligned}
\end{equation*}
where $(\romannumeral1)$ is due to $\frac{1}{k(k + 1)} \leq \frac{1}{k^2}$, 
$\frac{5(k + 1)(k - 1)}{k(k + 1)} \leq 6$ and $\frac{k - 1}{k(k + 1)} \leq \frac{1}{k + 1}$. Since $\ee[\norm{F(\vu_k)}] \leq (\ee[\norm{F(\vu_k)}^2])^{\frac{1}{2}}$ by Jensen's inequality, we have 
\begin{equation*}
\begin{aligned}
    \;& \ee\big[\norm{F(\vu_k)}^2\big] \\
    \leq \;& \frac{2L}{k}\norm{\vu_0 - \vu^*}\Big(\ee\big[\norm{F(\vu_k)}^2\big]\Big)^{\frac{1}{2}} + \Big(\frac{24L^2\norm{\vu_0 - \vu^*}^2}{k^2} + \Big(8 + \frac{\Lambda_1^2}{2(k+1)}\Big)\epsilon^2 + \frac{\Lambda_0^2\pi^2}{12k^2}\Big),
\end{aligned}
\end{equation*}
which is a quadratic inequality with respect to $(\ee[\norm{F(\vu_k)}^2])^{\frac{1}{2}}$. Similarly as for $k = 1,$ bounding its solution by its larger root, we obtain 
\begin{equation*}
\begin{aligned}
    \;& \Big(\ee\big[\norm{F(\vu_k)}^2\big]\Big)^{\frac{1}{2}} \\
    \leq \;&  \frac{L}{k}\norm{\vu_0 - \vu^*} + \frac{1}{2}\sqrt{\frac{4L^2}{k^2}\norm{\vu_0 - \vu^*}^2 + 4\Big(\frac{24L^2\norm{\vu_0 - \vu^*}^2}{k^2} + \Big(8 + \frac{\Lambda_1^2}{2(k+1)}\Big)\epsilon^2 + \frac{\Lambda_0^2\pi^2}{12k^2}\Big)} \\
    \overset{(\romannumeral1)}{\leq} \;& \frac{2L}{k}\norm{\vu_0 - \vu^*} + \Big(\frac{5 L\norm{\vu_0 - \vu^*}}{k} + \sqrt{8 + \frac{\Lambda_1^2}{2(k+1)}}\epsilon + \frac{\Lambda_0\pi}{2\sqrt{3}k}\Big)
    \\
    = \;& \frac{7 L\norm{\vu_0 - \vu^*} + \frac{\Lambda_0\pi}{2\sqrt{3}}}{k} + \sqrt{8 + \frac{\Lambda_1^2}{2(k+1)}}\epsilon \\
    \overset{(\romannumeral2)}{\leq} \;& \frac{\Lambda_0}{k} + \Lambda_1\epsilon,
\end{aligned}
\end{equation*}
where $(\romannumeral1)$ is due to the fact that $\sqrt{\sum_{i = 1}^{n}X_i^2} \leq \sum_{i = 1}^{n}|X_i|$, and $(\romannumeral2)$ is because of our choice of $\Lambda_0, \Lambda_1$. Hence, the result also holds for the case $k$. Then by induction we know that the result holds for all $k \geq 1$.

Finally, when $k \geq \frac{2\Lambda_0}{\epsilon}$, we have $\frac{\Lambda_0}{k} \leq \epsilon / 2$. Also, since we have $\Lambda_1 = 4 \sqrt{\frac{2}{3}}< 3.5$, we obtain 
\begin{equation*}
    \ee[\norm{F(\vu_k)}] \leq \Big(\frac{1}{2} + 4 \sqrt{\frac{2}{3}}\Big)\epsilon \leq 4\epsilon.
\end{equation*}
Hence, the total number of iterations needed to attain $4\epsilon$ norm of the operator is  
\begin{equation*}
    N = \Big\lceil \frac{2\Lambda_0}{\epsilon} \Big\rceil = \co\Big(\frac{L\norm{\vu - \vu^*}}{\epsilon}\Big), 
\end{equation*}
thus completing the proof.
\end{proof}

\subsection{Constrained setting with a cocoercive operator}\label{appx:omitted-coco-constrained}
To extend the results to possibly constrained settings, similar to~\cite{diakonikolas2020halpern}, we make use of the operator mapping defined by
\begin{align}\label{eq:grad-mapping}
    \grad(\vu) = \eta\Big(\vu - \proj\big(\vu - \frac{1}{\eta}F(\vu)\big)\Big),
\end{align}
where $\cu \subseteq \rr^d$ is the closed convex constraint set and $\proj(\vu)$ is the projection operator. 
%
Operator $G_\eta$ is a valid proxy for approximating \eqref{def:MI}; see~\citep{diakonikolas2020halpern} for further details. 


The extension of our results to constrained stochastic settings is not immediate; the reason is that the stochastic query assumptions (Assumptions~\ref{assmpt:stoch-queries} and \ref{assmpt:multiQuery}) are made for the operator $F$, not $G_\eta.$ Nevertheless, as we show in this subsection, it is not hard to match the stochastic oracle complexity of the unconstrained setups by proving an additional auxiliary result that bounds the variance of an operator mapping corresponding to $\tF$ (Lemma~\ref{lemma:gradient-mapping-error}).  

We begin by recalling that whenever $F$ is $\frac{1}{L}$-cocoercive and $\eta \geq L$, the operator mapping $\grad$ is $\frac{3}{4\eta}$-cocoercive (see, e.g., \citep[Proposition~7]{diakonikolas2020halpern} and \citep[Lemma~10.11]{beck2017first}).
\begin{proposition} \label{prop:grad-cocoercive}
Let $F$ be $\frac{1}{L}$-cocoercive and let $\grad$ be defined as in Eq.~\eqref{eq:grad-mapping}, where $\eta \geq L$. Then $\grad$ is $\frac{3}{4\eta}$-cocoercive.
\end{proposition}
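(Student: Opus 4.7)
My plan is to exploit two facts: firm nonexpansiveness of the projection $\proj$ and $\frac{1}{L}$-cocoercivity of $F$. Introduce the shorthand $\vp(\vu) := \proj\!\big(\vu - \frac{1}{\eta}F(\vu)\big)$, so that $\grad(\vu) = \eta(\vu - \vp(\vu))$ and hence
\[
 \vu - \vv \;=\; \tfrac{1}{\eta}\big(\grad(\vu) - \grad(\vv)\big) + \big(\vp(\vu) - \vp(\vv)\big).
\]
Everything reduces to controlling the inner product $\langle \grad(\vu)-\grad(\vv), \vu-\vv\rangle$ from below by a multiple of $\|\grad(\vu)-\grad(\vv)\|^2$.

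The first key step is to apply firm nonexpansiveness of $\proj$ to the pair of arguments $\vu - \frac{1}{\eta}F(\vu)$ and $\vv - \frac{1}{\eta}F(\vv)$. After substituting the identity above for $\vu - \vv$, the resulting inequality simplifies to
\[
\big\langle \vp(\vu)-\vp(\vv),\ \grad(\vu)-\grad(\vv) - (F(\vu)-F(\vv))\big\rangle \;\geq\; 0.
\]
Using this and the same decomposition of $\vu - \vv$, I expand
\[
\langle \grad(\vu)-\grad(\vv), \vu-\vv\rangle = \tfrac{1}{\eta}\|\grad(\vu)-\grad(\vv)\|^2 + \langle \grad(\vu)-\grad(\vv),\ \vp(\vu)-\vp(\vv)\rangle,
\]
and lower-bound the last term by $\langle \vp(\vu)-\vp(\vv),\ F(\vu)-F(\vv)\rangle$ using the firm nonexpansiveness inequality above.

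The second step is to relate $\langle \vp(\vu)-\vp(\vv), F(\vu)-F(\vv)\rangle$ to the cocoercivity of $F$ by again writing $\vp(\vu)-\vp(\vv) = (\vu-\vv) - \tfrac{1}{\eta}(\grad(\vu)-\grad(\vv))$. Cocoercivity handles the first piece yielding $\frac{1}{L}\|F(\vu)-F(\vv)\|^2$, while the cross term $-\tfrac{1}{\eta}\langle \grad(\vu)-\grad(\vv), F(\vu)-F(\vv)\rangle$ is handled by Young's inequality $2ab \leq \alpha a^2 + \alpha^{-1}b^2$ with the carefully chosen weight $\alpha = L/\eta$, so that the $\|F(\vu)-F(\vv)\|^2$ terms cancel exactly.

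The main obstacle is precisely tuning that Young's weight to land on the constant $\tfrac{3}{4\eta}$. After the cancellation, what remains is
\[
\langle \grad(\vu)-\grad(\vv), \vu-\vv\rangle \;\geq\; \tfrac{1}{\eta}\Big(1 - \tfrac{L}{4\eta}\Big)\|\grad(\vu)-\grad(\vv)\|^2,
\]
and the hypothesis $\eta \geq L$ gives $1 - \tfrac{L}{4\eta} \geq \tfrac{3}{4}$, finishing the proof. The whole argument is short once the correct decomposition of $\vu - \vv$ is in place; the only subtle choice is the Young's inequality constant, since a suboptimal choice yields a worse cocoercivity constant than $\tfrac{3}{4\eta}$.
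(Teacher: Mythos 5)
Your argument is sound and, since the paper simply cites \citet{diakonikolas2020halpern} and \citet{beck2017first} rather than proving the proposition inline, it in fact supplies the self-contained derivation the paper defers to external references; the decomposition $\vu-\vv = \frac{1}{\eta}(\grad(\vu)-\grad(\vv)) + (\vp(\vu)-\vp(\vv))$ combined with firm nonexpansiveness of $\proj$ and cocoercivity of $F$ is precisely the standard route used in those sources. One small slip: for the stated cancellation of the $\|F(\vu)-F(\vv)\|^2$ terms you must take $\alpha = \frac{L}{2\eta}$ rather than $\alpha = \frac{L}{\eta}$ in $2ab \le \alpha a^2 + \alpha^{-1}b^2$ with $a = \|\grad(\vu)-\grad(\vv)\|$ and $b = \|F(\vu)-F(\vv)\|$ (your choice would only cancel half the term and would yield the weaker constant $\frac{1}{2\eta}$); with $\alpha = \frac{L}{2\eta}$ the cross term is bounded by $\frac{L}{4\eta^2}\|\grad(\vu)-\grad(\vv)\|^2 + \frac{1}{L}\|F(\vu)-F(\vv)\|^2$, which matches the displayed final inequality $\innp{\grad(\vu)-\grad(\vv),\vu-\vv} \ge \frac{1}{\eta}\bigl(1-\frac{L}{4\eta}\bigr)\|\grad(\vu)-\grad(\vv)\|^2 \ge \frac{3}{4\eta}\|\grad(\vu)-\grad(\vv)\|^2$ and gives exactly the claimed $\frac{3}{4\eta}$-cocoercivity when $\eta \ge L$.
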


To state the variant of stochastic Halpern iteration for constrained settings, we also define the operator mapping corresponding to the stochastic estimate $\tF$ by 
\begin{align}\label{eq:stoc-grad-mapping}
    \tG_{\eta}(\vu) = \eta\Big(\vu - \proj\big(\vu - \frac{1}{\eta}\tF(\vu)\big)\Big).
\end{align}

In the following lemma, we bound the error between the stochastic operator mapping and true operator mapping by the variance of stochastic queries.
%
\begin{restatable}{lemma}{gmErrorCons}
\label{lemma:gradient-mapping-error}
Let $G_{\eta}(\cdot)$ and $\tG_{\eta}(\cdot)$ be defined as in Eq.~\eqref{eq:grad-mapping} and Eq.~\eqref{eq:stoc-grad-mapping},  respectively. Then, for any $\vu \in \cu$ and any $\eta > 0$, we have  
\begin{equation}
    \|{G_{\eta}(\vu) - \tG_{\eta}(\vu)}\|^2 \leq \|{F(\vu) - \tF(\vu)}\|^2.
\end{equation}
\end{restatable}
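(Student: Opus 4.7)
The plan is to exploit the non-expansiveness of the Euclidean projection $\Pi_{\cu}$ onto the closed convex set $\cu$. Specifically, for any closed convex $\cu$, $\Pi_{\cu}$ satisfies $\|\Pi_{\cu}(\vx) - \Pi_{\cu}(\vy)\| \leq \|\vx - \vy\|$ for all $\vx, \vy \in \rr^d$; this is a standard property and I will cite it rather than re-derive it.

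First I would write out the difference $G_\eta(\vu) - \tG_\eta(\vu)$ using the definitions in Eq.~\eqref{eq:grad-mapping} and Eq.~\eqref{eq:stoc-grad-mapping}. The $\eta \vu$ terms cancel, leaving
\begin{equation*}
G_\eta(\vu) - \tG_\eta(\vu) = \eta\Big(\Pi_{\cu}\bigl(\vu - \tfrac{1}{\eta}\tF(\vu)\bigr) - \Pi_{\cu}\bigl(\vu - \tfrac{1}{\eta}F(\vu)\bigr)\Big).
\end{equation*}
Next I would take norms on both sides and apply non-expansiveness of $\Pi_{\cu}$ to the two projection arguments, which differ by exactly $\tfrac{1}{\eta}(F(\vu) - \tF(\vu))$. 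The factor $\eta$ outside then cancels the $1/\eta$ inside, yielding $\|G_\eta(\vu) - \tG_\eta(\vu)\| \leq \|F(\vu) - \tF(\vu)\|$. Squaring this inequality gives the claim.

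There is essentially no obstacle here: the proof is a two-line calculation. The only subtlety worth noting is that the statement holds \emph{pointwise}, so it applies regardless of whether $\tF$ is biased, and in particular it immediately extends (by taking conditional expectations) to the variance bound one actually wants in applications, namely $\ee[\|G_\eta(\vu) - \tG_\eta(\vu)\|^2] \leq \ee[\|F(\vu) - \tF(\vu)\|^2]$. This is the reason the lemma suffices to transplant the unconstrained analysis (Theorem~\ref{thm:cocoercive} and Corollary~\ref{thm:complexity}) to the constrained setting, since the ``stochastic error'' terms in the potential-function analysis can all be replaced by the variance bounds on $\tF$ that are already controlled by the PAGE estimator via Corollary~\ref{cor:variance-bound}.
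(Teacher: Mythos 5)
Your proof is correct and is essentially the same as the paper's: both write out $G_\eta(\vu)-\tG_\eta(\vu)$, cancel the $\eta\vu$ terms, and apply non-expansiveness of $\Pi_{\cu}$ to get $\|G_\eta(\vu)-\tG_\eta(\vu)\|\leq\|F(\vu)-\tF(\vu)\|$ before squaring. The observation that the bound holds pointwise (and hence under any expectation) is a nice clarifying remark but does not change the argument.
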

\begin{proof}
By the definition of gradient mapping, we have 
\begin{equation*}
\begin{aligned}
    \norm{G_{\eta}(\vu) - \tG_{\eta}(\vu)}^2 = \eta^2\Big\|\proj\Big(\vu - \frac{1}{\eta}F(\vu)\Big) - \proj\Big(\vu - \frac{1}{\eta}\tF(\vu)\Big)\Big\|^2.
\end{aligned}
\end{equation*}
Since the projection operator is non-expansive, we obtain 
\begin{equation*}
\begin{aligned}
    \norm{G_{\eta}(\vu) - \tG_{\eta}(\vu)}^2 \leq \;& \eta^2\Big\|\Big(\vu - \frac{1}{\eta}F(\vu)\Big) - \Big(\vu - \frac{1}{\eta}\tF(\vu)\Big)\Big\|^2 = \norm{F(\vu) - \tF(\vu)}^2, 
\end{aligned}
\end{equation*}
thus completing the proof.
\end{proof}

Similar to the unconstrained setup, we define the following stochastic Halpern iteration for the constrained setup: 
\begin{align}\label{eq:cons-halpern}
    \vu_{k + 1} = \lambda_{k + 1}\vu_0 + (1 - \lambda_{k + 1})\big(\vu_k - \tG_{L_k}(\vu_k) / L_{k + 1}\big), 
\end{align}
where $L_k \geq L$, $\forall k \geq 0$. By the cocoercivity of the operator mapping and the error bound in Lemma~\ref{lemma:gradient-mapping-error}, we can immediately obtain the results for the iteration complexity and stochastic oracle complexity as in the unconstrained case, by applying Theorem~\ref{thm:cocoercive} and Corollary~\ref{thm:complexity} to $G_L$ and $\tG_L$. This is summarized in the following Theorem~\ref{thm:coco-rate-constrained} and Corollary~\ref{cor:coco-constrained}. To prove these, we make use of the potential function as in the unconstrained settings 
\begin{equation}\label{eq:con-potential-function}
    \cc_{k} = \frac{A_k}{2L_k}\lVert G_{L_k}(\vu_k) \rVert^2 + B_k\innp{G_{L_k}(\vu_k), \vu_k - \ui}, 
\end{equation}
and first bound the change of $\cc_k$ in the following Lemma~\ref{lemma:ck-cons}. For short, we denote $G_L$ as $G$ below.

\begin{lemma}\label{lemma:ck-cons}
Let $\cc_k$ be defined as in Eq.~\eqref{eq:con-potential-function}, where $A_k$ and $B_k$ satisfy Assumption \ref{asp:para}. Assume that $L$ is already known and we set $L_k = L$ for any $k \geq 1$. Then for any $k \geq 2$, we have
\begin{equation*}
    \cc_k - \cc_{k - 1} \leq \frac{A_k}{L}\norm{G(\vu_{k - 1}) - \tG(\vu_{k - 1})}^2 + \frac{A_k - A_{k - 1}}{2L}\innp{G(\vu_{k - 1}), G(\vu_{k - 1}) - \tG(\vu_{k - 1})}.
\end{equation*}
\end{lemma}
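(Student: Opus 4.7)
The plan is to mirror the proof of Lemma~\ref{lemma:diffCk} from the unconstrained setting, with the operator $F$ replaced by the gradient mapping $G = G_L$ and the stochastic estimator $\tF$ replaced by $\tG = \tG_L$, while exploiting the stronger $\tfrac{3}{4L}$-cocoercivity of $G$ guaranteed by Proposition~\ref{prop:grad-cocoercive}. The starting point is the identity
\[
    \cc_k - \cc_{k-1} = \tfrac{A_k}{2L}\|G(\vu_k)\|^2 + B_k\innp{G(\vu_k), \vu_k - \vu_0} - \tfrac{A_{k-1}}{2L}\|G(\vu_{k-1})\|^2 - B_{k-1}\innp{G(\vu_{k-1}), \vu_{k-1} - \vu_0}.
\]
First I would apply cocoercivity of $G$ with constant $\tfrac{3}{4L} = \tfrac{1}{2L} + \tfrac{1}{4L}$ and rearrange to obtain
\[
    \tfrac{1}{2L}\|G(\vu_k)\|^2 \leq \innp{G(\vu_k),\, \vu_k - \vu_{k-1} + \tfrac{1}{L}G(\vu_{k-1})} - \innp{G(\vu_{k-1}),\vu_k-\vu_{k-1}} - \tfrac{1}{2L}\|G(\vu_{k-1})\|^2 - \tfrac{1}{4L}\|G(\vu_k) - G(\vu_{k-1})\|^2.
\]
Multiplying by $A_k$ and substituting into the expression for $\cc_k - \cc_{k-1}$, the next step is to simplify the inner products using the defining recursion $\vu_k = \lambda_k \vu_0 + (1-\lambda_k)(\vu_{k-1} - \tfrac{1}{L}\tG(\vu_{k-1}))$ together with $\lambda_k = \tfrac{B_k}{A_k + B_k}$: this yields the clean identities $A_k(\vu_k - \vu_{k-1}) + B_k(\vu_k - \vu_0) = -\tfrac{A_k}{L}\tG(\vu_{k-1})$ and $A_k(\vu_k - \vu_{k-1}) + B_{k-1}(\vu_{k-1} - \vu_0) = -\tfrac{A_k^2}{L(A_k+B_k)}\tG(\vu_{k-1})$, exactly as in the unconstrained case.

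Next I would invoke Assumption~\ref{asp:para} with $L_k = L_{k-1} = L$, which reduces to $1 - \tfrac{2B_k}{A_k+B_k} = \tfrac{A_{k-1}}{A_k}$, and is equivalent to $\tfrac{A_k^2}{A_k+B_k} = \tfrac{A_k + A_{k-1}}{2}$. Combined with the identities above, this produces the cancellation needed to arrive at
\[
    \cc_k - \cc_{k-1} \leq \tfrac{A_k}{L}\innp{G(\vu_k) - G(\vu_{k-1}),\, G(\vu_{k-1}) - \tG(\vu_{k-1})} - \tfrac{A_k}{4L}\|G(\vu_k) - G(\vu_{k-1})\|^2 + \tfrac{A_k - A_{k-1}}{2L}\innp{G(\vu_{k-1}),\, G(\vu_{k-1}) - \tG(\vu_{k-1})}.
\]

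Finally, the remaining cross term is handled by a weighted Young's inequality $\innp{a,b} \leq \tfrac{1}{4}\|a\|^2 + \|b\|^2$ applied with $a = G(\vu_k) - G(\vu_{k-1})$ and $b = G(\vu_{k-1}) - \tG(\vu_{k-1})$, scaled by $\tfrac{A_k}{L}$; this produces the term $\tfrac{A_k}{L}\|G(\vu_{k-1}) - \tG(\vu_{k-1})\|^2$ while the $\tfrac{A_k}{4L}\|G(\vu_k) - G(\vu_{k-1})\|^2$ component cancels exactly against the negative slack coming from the $\tfrac{3}{4L}$-cocoercivity of $G$. The main obstacle to watch out for is precisely tracking the constant: the potential function here uses the factor $\tfrac{1}{2L_k}$ (rather than $\tfrac{1}{L_k}$ as in the unconstrained case) specifically so that the weaker $\tfrac{3}{4L}$-cocoercivity of $G$ (as opposed to the $\tfrac{1}{L}$-cocoercivity of $F$) leaves enough slack for Young's inequality to close the argument with coefficient $\tfrac{A_k}{L}$ on the variance term, matching the lemma's statement.
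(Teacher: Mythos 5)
Your proposal is correct and follows essentially the same route as the paper's own proof of this lemma: start from the definition of $\cc_k - \cc_{k-1}$, apply the $\tfrac{3}{4L}$-cocoercivity of $G$ (Proposition~\ref{prop:grad-cocoercive}) with the split $\tfrac{3}{4L}=\tfrac{1}{2L}+\tfrac{1}{4L}$, substitute the update rule and the Assumption~\ref{asp:para} identities to collapse the inner-product terms, and close with Young's inequality $\innp{p,q}\le\tfrac{1}{4}\|p\|^2+\|q\|^2$ against the leftover $-\tfrac{A_k}{4L}\|G(\vu_k)-G(\vu_{k-1})\|^2$. All of your intermediate identities (including $A_k^2/(A_k+B_k)=(A_k+A_{k-1})/2$ from $L_k=L_{k-1}=L$ in Assumption~\ref{asp:para}, and the factor-of-$2$ bookkeeping between the constrained potential $\tfrac{A_k}{2L_k}\|G\|^2$ and the unconstrained one) match the paper, and your Young's inequality step is in fact cleaner than the garbled expression in the paper's proof, which is a typo for exactly the inequality you use.
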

\begin{proof}
By the definition of $\cc_k$, we have 
\begin{align*}
    \cc_k - \cc_{k - 1} = \;& \frac{A_k}{2L_k}\norm{G_{L_k}(\vu_k)}^2 + B_k\innp{G_{L_k}(\vu_k), \vu_k - \vu_0} \\
    & - \frac{A_{k - 1}}{2L_{k - 1}}\norm{G_{L_{k - 1}}(\vu_{k - 1})}^2 - B_{k - 1}\innp{G_{L_{k - 1}}(\vu_{k - 1}), \vu_{k - 1} - \vu_0}.
\end{align*}
Since $G_{L_k}$ is cocoercive with parameter $\frac{3}{4L_k}$ when $L_k \geq L$, we have 
\begin{align*}
    \;& \innp{G_{L_k}(\vu_k) - G_{L_k}(\vu_{k - 1}), \vu_k - \vu_{k - 1}} \\ 
    \geq \;& \frac{3}{4L_k}\norm{G_{L_k}(\vu_k) - G_{L_k}(\vu_{k - 1})}^2 \\
    = \;& \frac{1}{2L_k}\Big(\norm{G_{L_k}(\vu_k)}^2 - 2\innp{G_{L_k}(\vu_k), G_{L_k}(\vu_{k - 1})} + \norm{G_{L_k}(\vu_{k - 1})}^2\Big) \\
    & + \frac{1}{4L_k}\norm{G_{L_k}(\vu_k) - G_{L_k}(\vu_{k - 1})}^2.
\end{align*}
Multiplying $A_k$ on both sides and rearranging the terms, we obtain 
\begin{align*}
    \frac{A_k}{2L_k}\norm{G_{L_k}(\vu_k)}^2 \leq \;& \innp{G_{L_k}(\vu_k), A_k(\vu_k - \vu_{k - 1}) + \frac{A_k}{L_k}G_{L_k}(\vu_{k - 1})} \\
    & - \innp{G_{L_k}(\vu_{k - 1}), A_k(\vu_k - \vu_{k - 1})} \\ & - \frac{A_k}{2L_k}\norm{G_{L_k}(\vu_{k - 1})}^2 - \frac{A_k}{4L_k}\norm{G_{L_k}(\vu_k) - G_{L_k}(\vu_{k - 1})}^2.
\end{align*}
Plugging this into $\cc_k - \cc_{k - 1}$, we have 
\begin{align*}\label{ineq:diffCk-proof-main}
    \cc_k - \cc_{k - 1} \leq \;& \innp{G_{L_k}(\vu_k), A_k(\vu_k - \vu_{k - 1}) + \frac{A_k}{L_k}G_{L_k}(\vu_{k - 1}) + B_k(\vu_k - \vu_0)} \\
    & - \innp{G_{L_k}(\vu_{k - 1}), A_k(\vu_k - \vu_{k - 1})} + \innp{G_{L_{k - 1}}(\vu_{k - 1}), B_{k - 1}(\vu_{k - 1} - \vu_0)} \\
    & - \Big(\frac{A_k}{2L_k}\norm{G_{L_k}(\vu_{k - 1})}^2 + \frac{A_{k - 1}}{2L_{k - 1}}\norm{G_{L_{k - 1}}(\vu_{k - 1})}^2 \Big) \\
    & - \frac{A_k}{4L_k}\norm{G_{L_k}(\vu_k) - G_{L_k}(\vu_{k - 1})}^2.
\end{align*}
Since $\lambda_k = \frac{B_k}{A_k + B_k}$, we have 
\begin{align*}
    \vu_{k} = \frac{B_k}{A_k + B_k}\vu_0 + \frac{A_k}{A_k + B_k}\Big(\vu_{k - 1} - \tG_{L_{k - 1}}(\vu_{k - 1}) / L_{k}\Big), 
\end{align*}
which leads to $A_k(\vu_k - \vu_{k - 1}) + \frac{A_k}{L_k}G_{L_k}(\vu_{k - 1}) + B_k(\vu_k - \vu_0) = \frac{A_k}{L_k}\Big(G_{L_{k - 1}}(\vu_{k - 1}) - \tG_{L_{k - 1}}(\vu_{k - 1})\Big)$. 

Further, as $\frac{B_{k - 1}}{A_k} = \frac{B_k}{A_k + B_k}$ by https://www.overleaf.com/project/5fe36b9ad2991b26777b720dAssumption \ref{asp:para}, we have 
\begin{align*}
    \;& \innp{G_{L_k}(\vu_{k - 1}), A_k(\vu_k - \vu_{k - 1}) +  B_{k - 1}(\vu_{k - 1} - \vu_0)} \\
    = \;& A_k\innp{G_{L_k}(\vu_{k - 1}), \vu_k - \vu_{k - 1} +  \frac{B_{k - 1}}{A_k}(\vu_{k - 1} - \vu_0)} \\
    = \;& A_k\innp{G_{L_k}(\vu_{k - 1}), \vu_k - \frac{A_k}{A_k + B_k}\vu_{k - 1} - \frac{B_k}{A_k + B_k}\vu_0} \\
    = \;& A_k\innp{G_{L_k}(\vu_{k - 1}),  - \frac{A_k}{A_k + B_k}\tG_{L_{k - 1}}(\vu_{k - 1})/L_k}.
\end{align*}
Moreover, by Assumption \ref{asp:para}, we have $\frac{1}{L_k}(1 - \frac{2B_k}{A_k + B_k}) = \frac{A_{k - 1}}{A_k L_{k - 1}}$, so we obtain 
\begin{equation*}
\begin{aligned}
    \;& \innp{G_{L_k}(\vu_{k - 1}), A_k(\vu_k - \vu_{k - 1}) + B_{k - 1}(\vu_{k - 1} - \vu_0)} \\
    = \;& A_k\innp{G_{L_k}(\vu_{k - 1}),  - \frac{A_k}{A_k + B_k}\tG_{L_{k - 1}}(\vu_{k - 1})/L_k} \\
    = \;& -\frac{1}{2}\innp{G_{L_k}(\vu_{k - 1}), \Big(\frac{A_k}{L_k} + \frac{A_{k - 1}}{L_{k - 1}}\Big)\tG_{L_{k - 1}}(\vu_{k - 1})}. 
\end{aligned}
\end{equation*}
Having $L_k = L$ and denoting $G_{L} = G$ for short, we have 
\begin{equation*}
\begin{aligned}
    \cc_k - \cc_{k - 1} \leq \;& \innp{G(\vu_k), \frac{A_k}{L}\Big(G(\vu_{k - 1}) - \tG(\vu_{k - 1})\Big)} + \innp{G(\vu_{k - 1}), \frac{A_k + A_{k - 1}}{2L}\tG(\vu_{k - 1})} \\
    & - \frac{A_k + A_{k - 1}}{2L}\norm{G(\vu_{k - 1})}^2 - \frac{A_k}{4L}\norm{G(\vu_k) - G(\vu_{k - 1})}^2 \\
    = \;& \frac{A_k}{L}\innp{G(\vu_k), G(\vu_{k - 1}) - \tG(\vu_{k - 1})} - \frac{A_k}{4L}\norm{G(\vu_k) - G(\vu_{k - 1})}^2 \\
    & - \frac{A_k + A_{k - 1}}{2L}\innp{G(\vu_{k - 1}), G(\vu_{k - 1}) - \tG(\vu_{k - 1})} \\ 
    = \;& \frac{A_k}{L}\innp{G(\vu_k) - G(\vu_{k - 1}), G(\vu_{k - 1}) - \tG(\vu_{k - 1})} - \frac{A_k}{4L}\norm{G(\vu_k) - G(\vu_{k - 1})}^2 \\
    & + \frac{A_k - A_{k - 1}}{2L}\innp{G(\vu_{k - 1}), G(\vu_{k - 1}) - \tG(\vu_{k - 1})}.
\end{aligned}
\end{equation*}
Since $2\innp{p, q} + \norm{p}^2 \leq \norm{q}^2$ for any $p, q \in \rr^d$, we have 
\begin{align*}
    \cc_k - \cc_{k - 1} \leq \;& \frac{A_k}{L}\norm{G(\vu_{k - 1}) - \tG(\vu_{k - 1})}^2 + \frac{A_k - A_{k - 1}}{2L}\innp{G(\vu_{k - 1}), G(\vu_{k - 1}) - \tG(\vu_{k - 1})}, 
\end{align*}
thus completing the proof.
\end{proof}

\begin{theorem}\label{thm:coco-rate-constrained}
Given an arbitrary $\vu_0 \in \rr^d,$ suppose that iterates $\vu_k$ evolve according to Halpern iteration for the constrained setup from Eq.~\eqref{eq:cons-halpern} for $k \geq 1,$ where $L_k = L$ and $\lambda_k = \frac{1}{k+1}.$ 
 %
 Given $\epsilon > 0,$ if we have that $\ee[\|F(\vu_0) - \tF(\vu_0)\|^2] \leq \frac{\epsilon^2}{8}$ and 
  $\ee\big[\big\|{F(\vu_k) - \tF(\vu_k)}\big\|^2\big] \leq \frac{\epsilon^2}{k}$ for all $k \geq 1$, then for all $k \geq 1,$ %
\begin{equation}
    \ee[\norm{G(\vu_k))}] \leq \frac{\Lambda_0}{k} + \Lambda_1\epsilon,
\end{equation}
where $\Lambda_{0} = 20 L\norm{\ui - \um}$ and $\Lambda_1 = \sqrt{13}$. 
As a result, stochastic Halpern iteration from Eq.~\eqref{eq:Halpern} returns a point $\vu_k$ such that $\ee[\norm{G(\vu_k)}] \leq 5\epsilon$ after at most $N = \lceil \frac{2\Lambda_0}{\epsilon}\rceil = \co\big(\frac{L\|\vu_0 -\vu^*\|}{\epsilon}\big)$ iterations.
\end{theorem}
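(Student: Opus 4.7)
The strategy is to mirror the inductive argument used to prove Theorem~\ref{thm:cocoercive}, but working throughout with the operator mapping $G = G_L$ and its stochastic counterpart $\tG = \tG_L$ in place of $F$ and $\tF$. Since $\vu_{k+1}$ is defined by applying the projection onto $\cu$ (implicit in the definition of $\tG_{L_k}$), we have $\vu_k \in \cu$ for every $k \geq 0$. Three facts about $G$ carry over the relevant machinery: $G$ is $\tfrac{3}{4L}$-cocoercive by Proposition~\ref{prop:grad-cocoercive}; $G(\um) = \zeros$ because $\um$ solves \eqref{def:MI}, so by monotonicity $\langle G(\vu_k), \um - \vu_k\rangle \leq 0$, giving the analogue of Lemma~\ref{lemma:error} for $G$; and Lemma~\ref{lemma:gradient-mapping-error} lets us translate the stochastic error assumption $\ee[\|F(\vu_k) - \tF(\vu_k)\|^2] \leq \epsilon^2/k$ directly into $\ee[\|G(\vu_k) - \tG(\vu_k)\|^2] \leq \epsilon^2/k$.

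The plan is then to pick $A_k = k(k+1)$, $B_k = k+1$ (which satisfy Assumption~\ref{asp:para} with $L_k \equiv L$ and produce $\lambda_k = 1/(k+1)$), and adopt the potential function $\cc_k$ from Eq.~\eqref{eq:con-potential-function}. Lemma~\ref{lemma:ck-cons} yields
\begin{equation*}
\cc_k - \cc_{k-1} \leq \tfrac{A_k}{L}\|G(\vu_{k-1}) - \tG(\vu_{k-1})\|^2 + \tfrac{A_k - A_{k-1}}{2L}\langle G(\vu_{k-1}), G(\vu_{k-1}) - \tG(\vu_{k-1})\rangle.
\end{equation*}
For the base case $k=1$, I will mimic the argument for $\ee[\cc_1]$ in the proof of Theorem~\ref{thm:cocoercive}, using $\tfrac{3}{4L}$-cocoercivity of $G$ and the iteration $\vu_1 = \vu_0 - \tfrac{1}{L}\tG(\vu_0)$ together with Lemma~\ref{lemma:gradient-mapping-error} to reduce the stochastic error at $\vu_0$ to the assumed bound on $\ee[\|F(\vu_0) - \tF(\vu_0)\|^2]$. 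This yields a quadratic inequality in $(\ee[\|G(\vu_1)\|^2])^{1/2}$ whose larger root gives both $\ee[\cc_1] \leq C_1 L\|\vu_0 - \vu^*\|^2 + C_2 \epsilon^2/L$ and $(\ee[\|G(\vu_1)\|^2])^{1/2} \leq \Lambda_0 + \Lambda_1 \epsilon$.

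For the inductive step, I will assume the bound holds for all $i \leq k-1$ and apply Young's inequality to the inner-product term in Lemma~\ref{lemma:ck-cons} (using $A_k - A_{k-1} = 2k$), then telescope from $i=2$ to $k$ and take expectations. The two resulting sums can be controlled exactly as in Eqs.~\eqref{eq:thm-coco-bnd-1}--\eqref{eq:thm-coco-bnd-2}: the first via $\ee[\|G(\vu_{i-1}) - \tG(\vu_{i-1})\|^2] \leq \epsilon^2/(i-1)$ (Lemma~\ref{lemma:gradient-mapping-error} plus the theorem hypothesis), the second via the inductive hypothesis and $\sum_{i\geq 2} (i-1)^{-2} \leq \pi^2/6$. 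Feeding the resulting bound on $\ee[\cc_k]$ into the $G$-analogue of Lemma~\ref{lemma:error} and solving the quadratic inequality in $(\ee[\|G(\vu_k)\|^2])^{1/2}$ closes the induction. The bound $\ee[\|G(\vu_k)\|] \leq 5\epsilon$ after $N = \lceil 2\Lambda_0/\epsilon\rceil$ iterations follows by direct inspection, as in the last step of the proof of Theorem~\ref{thm:cocoercive}.

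The main obstacle is bookkeeping with the shifted constants: because Lemma~\ref{lemma:ck-cons} has coefficient $A_k/L$ (twice as large as the $A_k/(2L)$ appearing in Lemma~\ref{lemma:diffCk}) and because $G$ is $\tfrac{3}{4L}$-cocoercive rather than $\tfrac{1}{L}$-cocoercive, the numerical factors $\Lambda_0, \Lambda_1$ differ from those in Theorem~\ref{thm:cocoercive}; I must track them carefully so that the induction closes with the claimed $\Lambda_0 = 20L\|\vu_0 - \vu^*\|$ and $\Lambda_1 = \sqrt{13}$ (rather than some larger constants), and in particular ensure that the $\epsilon^2$ contribution to $\ee[\|G(\vu_k)\|^2]$ from the telescoped sum stays bounded by $\Lambda_1^2 \epsilon^2$ uniformly in $k$. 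No new conceptual machinery is required.
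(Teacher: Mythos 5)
Your overall plan is correct and mirrors the paper's own proof closely: replace $F,\tF$ throughout by $G = G_L,\tG = \tG_L$, use Proposition~\ref{prop:grad-cocoercive} for $\tfrac{3}{4L}$-cocoercivity of $G$, Lemma~\ref{lemma:gradient-mapping-error} to transfer the variance bound, Lemma~\ref{lemma:ck-cons} as the analogue of Lemma~\ref{lemma:diffCk}, and close the induction by solving the quadratic in $(\ee[\|G(\vu_k)\|^2])^{1/2}$.

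One substantive step, however, cannot be mimicked verbatim from Theorem~\ref{thm:cocoercive}. In the unconstrained base case, the cross term $\innp{F(\vu_0), \tF(\vu_0) - F(\vu_0)}$ is eliminated using the hypothesis that $\tF(\vu_0)$ is \emph{unbiased}. Here the analogous estimator $\tG(\vu_0) = L\big(\vu_0 - \proj\!\big(\vu_0 - \tfrac{1}{L}\tF(\vu_0)\big)\big)$ is generally \emph{not} an unbiased estimator of $G(\vu_0)$, because the projection is nonlinear; so the expectation of $\innp{G(\vu_0), \tG(\vu_0) - G(\vu_0)}$ does not vanish. The paper instead applies Young's inequality, producing an extra $\tfrac{1}{2}\|G(\vu_0)\|^2$ term, which it then controls via $G(\vu^*) = \zeros$ and the Lipschitz continuity of the gradient mapping, giving $\|G(\vu_0)\| = \co(L\|\vu_0 - \vu^*\|)$. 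If you literally copy the unbiasedness cancellation your base-case derivation fails; you need this Young-plus-Lipschitz substitute. You also have a minor slip: with $L_k = L$ and $\lambda_1 = \tfrac{1}{2}$, the first iterate is $\vu_1 = \vu_0 - \tfrac{1}{2L}\tG(\vu_0)$, not $\vu_0 - \tfrac{1}{L}\tG(\vu_0)$. Once these two points are fixed, the rest of your bookkeeping plan should yield the stated $\Lambda_0,\Lambda_1$.
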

\begin{proof}
First note that since $\cu$ is convex and closed, and $\vu_k - \tG(\vu_k) / L = \proj\Big(\vu_k - \frac{1}{L}\tF(\vu_k)\Big)$, then we have for $\forall k > 0$.
\begin{equation*}
    \begin{aligned}
        \vu_{k + 1} = \;& \lambda_{k + 1}\vu_0 + (1 - \lambda_{k + 1})\Big(\vu_k - \tG(\vu_k) / L\Big) \\
        = \;& \lambda_{k + 1}\vu_0 + (1 - \lambda_{k + 1})\proj\Big(\vu_k - \frac{1}{L}\tF(\vu_k)\Big) \in \cu.
    \end{aligned}
\end{equation*}
Then we come to prove the convergence. By Jensen's Inequality, we have for $k \geq 1$ 
\begin{align*}
    \ee[\norm{G(\vu_k))}] \leq \Big(\ee[\norm{G(\vu_k)}^2]\Big)^{\frac{1}{2}}.
\end{align*}
So it suffices to show that there exists $\Lambda_0$ and $\Lambda_1$ such that for all $k \geq 1$ 
\begin{align*}
     \Big(\ee[\norm{G(\vu_k)}^2]\Big)^{\frac{1}{2}} \leq \frac{\Lambda_0}{k} + \Lambda_1\epsilon.
\end{align*}
We prove it by induction. First, we consider the basis case $k = 1$ in which $\vu_1 = \vu_0 - \frac{1}{2L}\tG(\vu_0)$, so we have $\cc_1 = \frac{1}{L}\norm{G(\vu_1)}^2 + 2\innp{G(\vu_1), \vu_1 - \vu_0} = \frac{1}{L}\Big(\norm{G(\vu_1)}^2 - \innp{G(\vu_1), \tG(\vu_0)}\Big)$. Also, since the operator $G$ is cocoercive with parameter $\frac{3}{4L}$, thus cocoercive with $\frac{1}{2L}$, we have 
\begin{align*}
    \norm{G(\vu_1) - G(\vu_0)}^2 \leq 2L\innp{G(\vu_1) - G(\vu_0), \vu_1 - \vu_0} = \innp{G(\vu_1) - G(\vu_0), -\tG(\vu_0)}.
\end{align*}
Expanding and rearranging the terms, we have 
\begin{align*}
    \norm{G(\vu_1)}^2 \leq \innp{G(\vu_0), \tG(\vu_0) - G(\vu_0)} + 2\innp{G(\vu_1), G(\vu_0)} - \innp{G(\vu_1), \tG(\vu_0)}.
\end{align*}
Subtracting $\innp{G(\vu_1), \tG(\vu_0)}$ and taking expectation with respect to all randomness on both sides, we have 
\begin{equation*}
\begin{aligned}
    \;& \ee\Big[\norm{G(\vu_1)}^2 - \innp{G(\vu_1), \tG(\vu_0)}\Big] \\
    \leq \;& \ee\Big[\innp{G(\vu_0), \tG(\vu_0) - G(\vu_0)} + 2\innp{G(\vu_1), G(\vu_0)} - 2\innp{G(\vu_1), \tG(\vu_0)}\Big] \\
    \overset{(\romannumeral1)}{\leq} \;& \ee\Big[\frac{1}{2}\norm{G(\vu_0)}^2 + \frac{1}{2}\norm{G(\vu_1)}^2 + \frac{5}{2}\norm{G(\vu_0) - \tG(\vu_0)}^2\Big],
\end{aligned}
\end{equation*}
where for $(\romannumeral1)$ we use Young's Inequality. Since $\vu^*$ is the solution of monotone inclusion, then we have $G(\vu^*) = 0$. So we have 
\begin{align*}
    \norm{G(\vu_0)}^2 = \norm{G(\vu_0) - G(\vu^*)}^2 \overset{(\romannumeral1)}{\leq} 10L^2\norm{\vu_0 - \vu^*}^2, 
\end{align*}
where $(\romannumeral1)$ can be verified by Young's Inequality and using the fact that the projection operator is non-expansive. Also using the results in Lemma \ref{lemma:gradient-mapping-error}, we obtain that 
\begin{equation*}
\ee[\cc_1] \leq \frac{1}{L}\ee\big[5L^2\norm{\vu_0 - \vu^*}^2 + \frac{1}{2}\norm{G(\vu_1)}^2 + \frac{5}{2}\norm{F(\vu_0) - \tF(\vu_0)}^2\big].
\end{equation*}
Proceeding similar to Lemma \ref{lemma:error}, we have 
\begin{align*}
    \ee[\norm{G(\vu_1)}^2] \leq \;& \frac{2B_1L_1}{A_1}\norm{\vu_0 - \vu^*}\ee[\norm{G(\vu_1)}] \\
    & + \frac{2L_1}{A_1 L}\ee\Big[5L^2\norm{\vu_0 - \vu^*}^2 + \frac{1}{2}\norm{G(\vu_1)}^2 + \frac{5}{2}\norm{F(\vu_0) - \tF(\vu_0)}^2\Big] \\
    = \;& 2L\norm{\vu_0 - \vu^*}\ee[\norm{G(\vu_1)}] \\
    & + \ee\Big[5L^2\norm{\vu_0 - \vu^*}^2 + \frac{1}{2}\norm{G(\vu_1)}^2 + \frac{5}{2}\norm{F(\vu_0) - \tF(\vu_0)}^2\Big].
\end{align*}
Subtracting $\ee[\frac{1}{2}\norm{G(\vu_1)}^2]$ on both sides and using the fact that $\ee[\norm{G(\vu_1)}] \leq \big(\ee[\norm{G(\vu_1)}^2]\big)^{\frac{1}{2}}$ and $\ee[\norm{F(\vu_0) - \tF(\vu_0)}^2] \leq \frac{\epsilon^2}{8}$, we have 
\begin{align*}
    \ee[[\norm{G(\vu_1)}^2] \leq 4L\norm{\vu_0 - \vu^*}\big(\ee[\norm{G(\vu_1)}^2]\big)^{\frac{1}{2}} + 10L^2\norm{\vu_0 - \vu^*}^2 + \frac{5\epsilon^2}{8},
\end{align*}
which is a quadratic function with respect to $(\ee[\norm{G(\vu_1)}^2])^{\frac{1}{2}}$. So by its larger root we have 
\begin{align*}
    (\ee[\norm{G (\vu_1)}^2])^{\frac{1}{2}} \leq \;& 2L\norm{\vu_0 - \vu^*} + \frac{1}{2}\sqrt{56L^2\norm{\vu_0 - \vu^*}^2 + \frac{5}{2}\epsilon^2} \\
    \leq \;& 2L\norm{\vu_0 - \vu^*} + \frac{1}{2}(2\sqrt{14}L\norm{\vu_0 - \vu^*} + \frac{\sqrt{10}}{2}\epsilon) \\
    \leq \;& 6L\norm{\vu_0 - \vu^*} + \epsilon \leq \Lambda_0 + \Lambda_1\epsilon.
\end{align*}
So the result holds for the basis case. Moreover, we can get a bound for $\ee[\cc_1]$ as follows 
\begin{align*}
    \ee[\cc_1] \leq \;& \frac{1}{L}\ee\Big[5L^2\norm{\vu_0 - \vu^*}^2 + \frac{1}{2}\norm{G(\vu_1)}^2 + \frac{5}{2}\norm{F(\vu_0) - \tF(\vu_0)}^2\Big] \\
    \overset{(\romannumeral1)}{\leq} \;& 5L\norm{\vu_0 - \vu^*}^2 + \frac{1}{2L}\Big(50L^2\norm{\vu_0 - \vu^*}^2 + \frac{5}{4}\epsilon^2\Big) + \frac{5}{2L}\frac{\epsilon^2}{8} \\
    \leq \;& 30L\norm{\vu_0 - \vu^*}^2 + \frac{\epsilon^2}{L},
\end{align*}
where $(\romannumeral1)$ can be verified by using the bound we get above for $\ee[\norm{G(\vu_1)}^2]$ and applying Young's Inequaltiy, and the fact that $\ee[\norm{F(\vu_0) - \tF(\vu_0)}^2] \leq \frac{\epsilon^2}{8}$. 

Assume that the result holds for all $1 \leq i \leq k - 1$, then we come to prove the case $k$. By Lemma \ref{lemma:ck-cons} we have for $\forall i \geq 2$ 
\begin{align*}
    \cc_{i} - \cc_{i - 1} \leq \;& \frac{A_i}{L}\norm{G(\vu_{i - 1}) - \tG(\vu_{i - 1})}^2 + \frac{A_i - A_{i - 1}}{2L}\innp{G(\vu_{i - 1}), G(\vu_{i - 1}) - \tG(\vu_{i - 1})} \\
    \overset{(\romannumeral1)}{\leq} \;& \frac{3i(i + 1)}{L}\norm{G(\vu_{i - 1}) - \tG(\vu_{i - 1})}^2 + \frac{i}{8L(i + 1)}\norm{G(\vu_{i - 1})}^2 \\
    \overset{(\romannumeral2)}{\leq} \;& \frac{3i(i + 1)}{L}\norm{F(\vu_{i - 1}) - \tF(\vu_{i - 1})}^2 + \frac{i}{8L(i + 1)}\norm{G(\vu_{i - 1})}^2, 
\end{align*}
where we use Young's Inequality and $A_i = i(i + 1)$ for $(\romannumeral1)$, and $(\romannumeral2)$ is due to Lemma \ref{lemma:gradient-mapping-error}. Taking expectation with respect to all randomness on both sides and telescoping from $i = 2$ to $k$, we obtain 
\begin{align*}
    \ee[\cc_k] \leq \;& \ee\Big[\cc_1 + \sum_{i = 2}^{k}\Big( \frac{3i(i + 1)}{L}\norm{F(\vu_{i - 1}) - \tF(\vu_{i - 1})}^2 + \frac{i}{8L(i + 1)}\norm{G(\vu_{i - 1})}^2 \Big)\Big] \\
    \leq \;& 30L\norm{\vu_0 - \vu^*}^2 + \frac{\epsilon^2}{L} + \ee\Big[\sum_{i = 2}^{k}\frac{i}{8L(i + 1)}\norm{G(\vu_{i - 1})}^2\Big]\\
    & + \ee\Big[\sum_{i = 2}^{k}\frac{3i(i + 1)}{L}\norm{F(\vu_{i - 1}) - \tF(\vu_{i - 1})}^2\Big].
\end{align*}
By Corollary~\ref{cor:variance-bound}, we have 
\begin{align*}
    \;& \ee\Big[\sum_{i = 2}^{k}\frac{3i(i + 1)}{L}\norm{F(\vu_{i - 1}) - \tF(\vu_{i - 1})}^2\Big] \\
    \leq \;& \sum_{i = 2}^{k}\frac{3i(i + 1)}{L}\frac{\epsilon^2}{i - 1} \overset{(\romannumeral1)}{\leq} \sum_{i = 2}^{k}\frac{6(i + 1)\epsilon^2}{L} = \frac{3(k + 4)(k - 1)\epsilon^2}{L},
\end{align*}
where $(\romannumeral1)$ is because $\frac{i}{i - 1} \leq 2$ for all $i \geq 2$. By induction, we have 
\begin{align*}
    \;& \ee\Big[\sum_{i = 2}^{k}\frac{i}{8L(i + 1)}\norm{G(\vu_{i - 1})}^2\Big] \\
    \overset{(\romannumeral1)}{\leq} \;& \sum_{i = 2}^{k}\frac{1}{8L}\Big(2\frac{\Lambda_0^2}{(i - 1)^2} + 2\Lambda_1^2\epsilon^2\Big) \\
    \overset{(\romannumeral2)}{\leq} \;& \frac{1}{4L}\Big(\Lambda_0^2\frac{\pi^2}{6} + (k - 1)\Lambda_1^2\epsilon^2\Big) = \frac{1}{L}\Big(\frac{\Lambda_0^2\pi^2}{24} + \frac{(k - 1)\Lambda_1^2\epsilon^2}{4}\Big),
\end{align*}
where $(\romannumeral1)$ follows from induction and $\frac{i}{i + 1} \leq 1$, and $(\romannumeral2)$ is due to $\sum_{i = 2}^{k}\frac{1}{(i - 1)^2} \leq \sum_{i = 1}^{\infty}\frac{1}{i^2} = \frac{\pi^2}{6}$. We now obtain 
\begin{align*}
    \ee[\cc_k] \leq 30L\norm{\vu_0 - \vu^*}^2 + \frac{\epsilon^2}{L} + \frac{3(k + 4)(k - 1)\epsilon^2}{L} + \frac{1}{L}\Big(\frac{\Lambda_0^2\pi^2}{24} + \frac{(k - 1)\Lambda_1^2\epsilon^2}{4}\Big).
\end{align*}
By the same derivation of Lemma \ref{lemma:error}, we have 
\begin{align*}
    \ee\big[\norm{G(\vu_k)}^2\big] \leq \;& \frac{B_kL_k}{A_k}\norm{\vu_0 - \vu^*}\ee[\norm{G(\vu_k)}] \\
    & + \frac{L_k}{A_k}\Big(30L\norm{\vu_0 - \vu^*}^2 + \frac{\epsilon^2}{L} + \frac{3(k + 4)(k - 1)\epsilon^2}{L} + \frac{\Lambda_0^2\pi^2}{24L} + \frac{(k - 1)\Lambda_1^2\epsilon^2}{4L}\Big) \\
    = \;& \frac{L}{k}\norm{\vu_0 - \vu^*}\ee[\norm{G(\vu_k)}] + \frac{30L^2\norm{\vu_0 - \vu^*}^2}{k(k + 1)}\\
    & + \frac{1}{k(k + 1)}\Big(\epsilon^2 + 3(k + 4)(k - 1)\epsilon^2 + \frac{\Lambda_0^2\pi^2}{24} + \frac{(k - 1)\Lambda_1^2\epsilon^2}{4}\Big) \\
    \overset{(\romannumeral1)}{\leq} \;& \frac{L}{k}\norm{\vu_0 - \vu^*}\ee[\norm{G(\vu_k)}] + \Big(\frac{30L^2\norm{\vu_0 - \vu^*}^2}{k^2} + (11 + \frac{\Lambda_1^2}{4k})\epsilon^2 + \frac{\Lambda_0^2\pi^2}{24k^2}\Big),
\end{align*}
where $(\romannumeral1)$ is due to $\frac{1}{k(k + 1)} \leq \frac{1}{k^2}$, $\frac{3(k + 1)(k - 1)}{k(k + 1)} \leq 10$ and $\frac{k - 1}{k(k + 1)} \leq \frac{1}{k}$. Since $\ee[\norm{G(\vu_k)}] \leq (\ee[\norm{G(\vu_k)}^2])^{\frac{1}{2}}$ by Jensen's Inequality, we have 
\begin{align*}
    \ee\big[\norm{G(\vu_k)}^2\big] \leq \;& \frac{L}{k}\norm{\vu_0 - \vu^*}\Big(\ee\big[\norm{G(\vu_k)}^2\big]\Big)^{\frac{1}{2}} \\
    & + \Big(\frac{30L^2\norm{\vu_0 - \vu^*}^2}{k^2} + (11 + \frac{\Lambda_1^2}{4k})\epsilon^2 + \frac{\Lambda_0^2\pi^2}{24k^2}\Big),
\end{align*}
which is a quadratic function with respect to $(\ee[\norm{G(\vu_k)}^2])^{\frac{1}{2}}$. So by its larger root we obtain 
\begin{align*}
    \;& \Big(\ee\big[\norm{G(\vu_k)}^2\big]\Big)^{\frac{1}{2}} \\
    \leq \;& \frac{L}{2k}\norm{\vu_0 - \vu^*} + \frac{1}{2}\sqrt{\frac{L^2}{k^2}\norm{\vu_0 - \vu^*}^2 + 4\Big(\frac{30L^2\norm{\vu_0 - \vu^*}^2}{k^2} + (11 + \frac{\Lambda_1^2}{4k})\epsilon^2 + \frac{\Lambda_0^2\pi^2}{24k^2}\Big)} \\
    \overset{(\romannumeral1)}{\leq} \;& \frac{L}{k}\norm{\vu_0 - \vu^*} + \Big(\frac{\sqrt{30}L\norm{\vu_0 - \vu^*}}{k} + \sqrt{11 + \frac{\Lambda_1^2}{4k}}\epsilon + \frac{\Lambda_0\pi}{2\sqrt{6}k}\Big) \\
    = \;& \frac{(1 + \sqrt{30})L\norm{\vu_0 - \vu^*} + \frac{\Lambda_0\pi}{2\sqrt{6}}}{k} + \sqrt{11 + \frac{\Lambda_1^2}{4k}}\epsilon \\
    \overset{(\romannumeral2)}{\leq} \;& \frac{\Lambda_0}{k} + \Lambda_1\epsilon,
\end{align*}
where $(\romannumeral1)$ is due to the fact that $\sqrt{\sum_{i = 1}^{n}X_i^2} \leq \sum_{i = 1}^{n}|X_i|$, and $(\romannumeral2)$ is because of our choice of $\Lambda_0, \Lambda_1$. Hence, the result also holds for the case $k$. Then by induction we know that the result holds for all $k \geq 1$.

Finally, when $k \geq \frac{2\Lambda_0}{\epsilon}$, we have $\frac{\Lambda_0}{k} \leq \epsilon / 2$. Also, since we have $\Lambda_1 = \sqrt{13}$, we obtain 
\begin{align*}
    \ee[\norm{G(\vu_k)}] \leq \Big(\frac{1}{2} + \sqrt{13}\Big)\epsilon \leq 5\epsilon.
\end{align*}
Hence, the total number of iterations needed to attain $5\epsilon$ norm of the operator is 
\begin{align*}
    N = \Big\lceil \frac{2\Lambda_0}{\epsilon} \Big\rceil \leq \frac{2\Lambda_0}{\epsilon} + 1 = \frac{\Delta}{\epsilon},
\end{align*}
thus completing the proof.
\end{proof}

\begin{corollary}\label{cor:coco-constrained}
Given an arbitrary $\vu_0 \in \rr^d,$ suppose that iterates $\vu_k$ evolve according to Halpern iteration from Eq.~\eqref{eq:cons-halpern} for $k \geq 1,$ where  $L_k = L$, and $\lambda_k = \frac{1}{k+1}$. Assume further that the stochastic estimate $\tF(\vu)$ is defined according to Eq.~\eqref{eq:PAGE}, with its parameters set according to Corollary~\ref{cor:variance-bound}. 
 Then, given any $\epsilon > 0,$ stochastic Halpern iteration from Eq.~\eqref{eq:cons-halpern} returns a point $\vu_k$ such that $\ee[\norm{G(\vu_k)}] \leq 5\epsilon$ with at most $\co(\frac{\sigma^2L\norm{\vu_0 - \vu^*} + L^3\norm{\vu_0 - \vu^*}^3}{\epsilon^3})$ oracle queries to $\hF$
in expectation. 
\end{corollary}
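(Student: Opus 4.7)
The plan is to mirror the argument used for Corollary~\ref{thm:complexity}, substituting the operator mapping $G_L$ and its stochastic counterpart $\tG_L$ for $F$ and $\tF$. First, I would invoke Theorem~\ref{thm:coco-rate-constrained} to conclude that $N = \lceil 2\Lambda_0/\epsilon\rceil = \co(L\|\vu_0-\vu^*\|/\epsilon)$ iterations suffice to reach $\ee[\|G(\vu_N)\|] \leq 5\epsilon$, provided the variance requirement $\ee[\|F(\vu_k)-\tF(\vu_k)\|^2]\leq \epsilon^2/k$ holds together with the stated initial-iterate bound. This requirement is secured by applying Corollary~\ref{cor:variance-bound} with the PAGE parameters prescribed in Algorithm~\ref{alg:cocoercive}, and Lemma~\ref{lemma:gradient-mapping-error} automatically transfers the same bound to $\ee[\|G(\vu_k)-\tG(\vu_k)\|^2]$, which is what Theorem~\ref{thm:coco-rate-constrained} actually consumes.

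The second step is the oracle-counting argument that appeared in the proof of Corollary~\ref{thm:complexity}. Its only nontrivial input is a bound of the form $\ee[\|\vu_k - \vu_{k-1}\|^2] = \co(\|\vu_0-\vu^*\|^2/k^2)$, which is what governs the expected size of $S_2^{(k)}$. I would establish this constrained analog of Lemma~\ref{lemma:uk} by unrolling the recursion in \eqref{eq:cons-halpern}: since $\vu_k - \vu_{k-1}$ can again be written as a telescoping combination of the step terms $\tG(\vu_i)/L$ with weights $\lambda_k\prod_{j=i+1}^{k-1}(1-\lambda_j)$, the same two applications of the generalized Young inequality that produced Inequality~\eqref{eq:ukOne} go through verbatim with $\tF(\vu_i)$ replaced by $\tG(\vu_i)$. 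Combining the resulting deterministic bound with $\ee[\|\tG(\vu_i)\|^2] \leq 2\ee[\|G(\vu_i)\|^2] + 2\ee[\|G(\vu_i)-\tG(\vu_i)\|^2]$, the rate $\ee[\|G(\vu_i)\|^2] = \co(L^2\|\vu_0-\vu^*\|^2/i^2)$ implied by Theorem~\ref{thm:coco-rate-constrained}, and the variance bound from Lemma~\ref{lemma:gradient-mapping-error} plus Corollary~\ref{cor:variance-bound}, yields $\ee[\|\vu_k-\vu_{k-1}\|^2] = \co(\|\vu_0-\vu^*\|^2/k^2)$.

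With this distance estimate in hand, the per-iteration expected query count becomes $\ee[m_k] = \co(\sigma^2/\epsilon^2 + L^2\|\vu_0-\vu^*\|^2/\epsilon^2)$ exactly as in Corollary~\ref{thm:complexity}: the factor $p_k=\Theta(1/k)$ in the denominator of $S_2^{(k)}$ is cancelled by the $1/k^2$ factor in $\ee[\|\vu_k-\vu_{k-1}\|^2]$. Summing over $k = 1,\dots,N$ and adding the warm-start cost $S_1^{(0)}=\co(\sigma^2/\epsilon^2)$ gives the advertised total expected complexity $\co((\sigma^2 L\|\vu_0-\vu^*\| + L^3\|\vu_0-\vu^*\|^3)/\epsilon^3)$.

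The main technical point to verify is the analog of Lemma~\ref{lemma:uk} for the constrained iteration. The algebraic skeleton of the unrolled recursion is identical, but two bookkeeping items must be checked: first, that the $\frac{3}{4L}$-cocoercivity of $G_L$ (rather than the $\frac{1}{L}$-cocoercivity of $F$) still supplies the $\ee[\|G(\vu_i)\|^2] = \co(L^2\|\vu_0-\vu^*\|^2/i^2)$ estimate used above, which is already delivered by Theorem~\ref{thm:coco-rate-constrained}; and second, that replacing $\tF$ by $\tG$ does not inflate the constants, which follows from the contractive variance-transfer inequality of Lemma~\ref{lemma:gradient-mapping-error}. Once these two observations are in place, the remainder of the argument is routine and inherits all constants from the unconstrained proof.
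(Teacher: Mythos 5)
Your plan is correct and follows essentially the same route as the paper's proof: unroll the recursion as in Lemma~\ref{lemma:uk} with $\tG$ replacing $\tF$, transfer variance bounds via Lemma~\ref{lemma:gradient-mapping-error}, invoke Theorem~\ref{thm:coco-rate-constrained} for the $\co(L^2\|\vu_0-\vu^*\|^2/i^2)$ decay of $\ee[\|G(\vu_i)\|^2]$, and sum the per-iteration oracle counts. One small bookkeeping remark: the hypothesis of Theorem~\ref{thm:coco-rate-constrained} is stated directly in terms of $\ee[\|F(\vu_k)-\tF(\vu_k)\|^2]\le\epsilon^2/k$ (not the $G$-version); the transfer via Lemma~\ref{lemma:gradient-mapping-error} is used internally in that theorem's proof, so you do not need to invoke it a second time at the outer level as your wording suggests—this is a framing nit, not a gap.
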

\begin{proof}
Let $m_k$ be the number of stochastic queries made by the estimator from Eq.~\eqref{eq:PAGE} at iteration $k$. Conditional on $\cf_k$ and using Corollary~\ref{cor:variance-bound}, since each stochastic gradient mapping $\tG(\vu_k)$ only involves one PAGE invariant stochastic estimate $\tF(\vu_k)$, we have 
\begin{align*}
    \ee\big[m_{k + 1} | \cf_{k - 1}\big] = \;& p_k\Big\lceil\frac{8\sigma^2}{p_k\epsilon^2}\Big\rceil + 2(1 - p_k)\Big\lceil\frac{8L^2\norm{\vu_k - \vu_{k - 1}}^2}{p_k^2\epsilon^2}\Big\rceil \\
    \overset{(\romannumeral1)}{\leq} \;& p_k\Big(\frac{8\sigma^2}{p_k\epsilon^2} + 1\Big) + 2(1 - p_k)\Big(\frac{8L^2\norm{\vu_k - \vu_{k - 1}}^2}{p_k^2\epsilon^2} + 1\Big), 
\end{align*}
where $(\romannumeral1)$ is due to the fact that $\lceil x \rceil \leq x + 1$ for any $x \in \rr$. Taking expectation with respect to all randomness on both sides, and rearranging the terms, we obtain \begin{align*}
    \ee[m_{k + 1}] \leq \frac{8\sigma^2}{\epsilon^2} + \frac{16(1 - p_k)L^2\ee\big[\norm{\vu_k - \vu_{k - 1}}^2\big]}{p_k^2\epsilon^2} + 2.
\end{align*}
By the same derivation as Lemma \ref{lemma:uk}, we have 
\begin{align}\label{ineq:uk-cons}
    \norm{\vu_k - \vu_{k - 1}}^2 \leq 
    \left\{\begin{array}{ll}
         \frac{1}{4L^2}\norm{\tG(\vu_0)}^2 & \text{if } k = 1, \\
         & \\
         \frac{2k^2}{L^2(k + 1)^2}\norm{\tG(\vu_{k - 1})}^2 + \sum_{i = 0}^{k - 2}\frac{2(i + 1)^2}{k(k + 1)^2L^2}\norm{\tG(\vu_i)}^2 & \text{if } k \geq 2.
    \end{array}\right.
\end{align}
By the corollary assumptions, we have $\ee[\norm{G(\vu_i)}^2] \leq \co(\frac{L^2\norm{\vu_0 - \vu^*}^2}{i^2})$ for $i \leq k - 1$ by Theorem~\ref{thm:coco-rate-constrained}. Then we obtain 
\begin{equation*}
\begin{aligned}
    \ee\Big[\norm{\tG(\vu_i)}^2\Big] \leq \;& 2\ee\Big[\norm{G(\vu_i)}^2\Big] + 2\ee\Big[\norm{\tG(\vu_i) - G(\vu_i)}^2\Big] \\
    \overset{(\romannumeral1)}{\leq} \;& 2\ee\Big[\norm{G(\vu_i)}^2\Big] + 2\ee\Big[\norm{\tF(\vu_i) - F(\vu_i)}^2\Big] \\
    \leq \;& \co\Big(\frac{L^2\norm{\vu_0 - \vu^*}^2}{i^2}\Big), 
\end{aligned}
\end{equation*}
where $(\romannumeral1)$ is due to Lemma~\ref{lemma:gradient-mapping-error}. 

Plugging it into Inequality~\eqref{ineq:uk-cons}, we have $\ee[\norm{\vu_k - \vu_{k - 1}}^2] = \co(\frac{\norm{\vu_0 - \vu^*}^2}{k^2})$, which leads to 
\begin{equation*}
    \ee[m_{k + 1}] = \co\Big(\frac{\sigma^2 + L^2\norm{\vu_0 - \vu^*}^2}{\epsilon^2}\Big) 
\end{equation*}
using $p_k = \frac{2}{k + 1} = \co(1/k)$. 

Further, by Theorem~\ref{thm:coco-rate-constrained}, the total number of iterations to attain $5\epsilon$ norm of the operator in expectation is $N = \co(\frac{L\norm{\vu_0 - \vu^*}}{\epsilon})$ and $m_1 = S_1^{(0)} = \co(\frac{\sigma^2}{\epsilon^2})$, we conclude that the total number of stochastic queries to $F$ is 
\begin{equation*}
    \ee[M] = \ee\Big[\sum_{k = 1}^N m_k\Big] = \co\Big(\frac{\sigma^2L\norm{\vu_0 - \vu^*} + L^3\norm{\vu_0 - \vu^*}^3}{\epsilon^3}\Big), 
\end{equation*}
thus completing the proof.
\end{proof}


\section{Omitted proofs from Section~\ref{sec:two_step_Halpern}}\label{appx:monotone}
We use the potential function, previously used by~\citep{tran2021halpern}, 
\begin{equation}\label{eq:mono-vk}
\begin{aligned}
\cv_k := \;& A_{k}\|F(\vu_{k})\|^{2} + B_{k}\innp{F(\vu_{k}), \vu_{k} - \vu_{0}} + c_{k} L^{2}\|\vu_{k} - \vv_{k-1}\|^{2}, 
\end{aligned}
\end{equation}
prove Theorem~\ref{thm:mono-rate}. Here $A_k$, $B_k$ and $c_k$ are positive parameters to be determined later. We start by bounding the change of $\cv_k$ under the following assumption on the parameters.
\begin{assumption}\label{asp:para-mono}
 $\lambda_k \in \left[0, 1\right)$, $\eta_k > 0$, and $A_k$, $B_k$ and $c_k$ are positive parameters satisfying $B_{k + 1} = \frac{B_k}{1 - \lambda_k}$, $A_k = \frac{B_k \eta_k}{2\lambda_k}$,
\begin{equation}\label{eq:asp-mono}
    0 < \eta_{k + 1} = \frac{\big(1 - \lambda_{k}^{2} - M\eta_{k}^{2}\big) \lambda_{k+1} \eta_{k}}{\big(1 - M\eta_{k}^{2}\big) \big(1 - \lambda_{k}\big) \lambda_{k}}, \quad M\eta_k^2 + \lambda_k^2 < 1, \quad \text{and } \quad \eta_{k + 1} \leq \frac{\lambda_{k + 1}\big(1 - \lambda_k\big)}{M \lambda_k \eta_k},
\end{equation}
where $M = 3L^2(2 + \theta)$ and $\theta > 0$ is some parameter that can be determined later.
\end{assumption}

The following lemma gives a bound on the difference between the potential function values at two consecutive iterations with the control of the parameters above. 
\begin{restatable}{lemma}{diffMono}
\label{lemma:mono-diff}
Let $\cv_k$ be defined as in Eq.~\eqref{eq:mono-vk}, where the parameters satisfy Assumption \ref{asp:para-mono}. Then the difference of potential function between two consecutive iterations can be bounded by
\begin{equation}
\begin{aligned}
\cv_{k + 1} - \cv_k  \leq \;& -L^{2}\bigg(\frac{\theta A_{k}}{M \eta_{k}^{2}} - c_{k+1}\bigg)\|\vu_{k+1} - \vv_{k}\|^{2} - L^{2}(c_{k} - A_{k})\|\vu_{k} - \vv_{k-1}\|^{2} \\
& + \frac{2A_k}{M\eta_k^2}\norm{F(\vv_k) - \tF(\vv_k)}^2 + A_k\norm{F(\vv_{k - 1}) - \tF(\vv_{k - 1})}^2.
\end{aligned}
\end{equation}
\end{restatable}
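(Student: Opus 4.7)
The plan is to mirror the deterministic potential function argument of Tran-Dinh for the two step Halpern iteration, while carefully tracking the two stochastic error terms $\ve_{k-1} := F(\vv_{k-1})-\tF(\vv_{k-1})$ and $\ve_k := F(\vv_k)-\tF(\vv_k)$ that arise from replacing $F$ by $\tF$ in the updates. First I would write $\cv_{k+1} - \cv_k$ as the sum of three differences, expand $\vu_{k+1} - \vu_k$ via the definition $\vu_{k+1} = \lambda_k\vu_0 + (1-\lambda_k)\vu_k - \eta_k\tF(\vv_k)$, and substitute $\tF(\vv_k) = F(\vv_k) - \ve_k$ and similarly $\tF(\vv_{k-1}) = F(\vv_{k-1})-\ve_{k-1}$ wherever they appear. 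This isolates a deterministic expression (structurally identical to the one analyzed in the deterministic two step Halpern analysis) plus explicit linear-in-$\ve$ stochastic correction terms.

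For the deterministic part, I would invoke monotonicity of $F$ with the pairs $(\vu_{k+1},\vv_k)$ and $(\vu_k,\vv_{k-1})$ to turn the cross terms involving $\innp{F(\vu_{k+1}),\vu_{k+1}-\vu_0}$ and $\innp{F(\vu_k),\vu_k-\vu_0}$ into quantities controllable by $\|\vu_{k+1}-\vv_k\|^2$ and $\|\vu_k-\vv_{k-1}\|^2$, using the identities $A_k = \frac{B_k\eta_k}{2\lambda_k}$ and $B_{k+1} = \frac{B_k}{1-\lambda_k}$ from Assumption~\ref{asp:para-mono} to cancel all remaining $\innp{F(\vu_{k+1}),\cdot}$ and $\innp{F(\vu_k),\cdot}$ terms. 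The $L$ Lipschitz property of $F$ converts the resulting squared norms $\|F(\vu_{k+1})-F(\vv_k)\|^2$ into $L^2\|\vu_{k+1}-\vv_k\|^2$, and the parameter relation for $\eta_{k+1}$ together with $M\eta_k^2 + \lambda_k^2 < 1$ is exactly what is needed to reduce the deterministic residue to $-L^2\bigl(\tfrac{\theta A_k}{M\eta_k^2} - c_{k+1}\bigr)\|\vu_{k+1}-\vv_k\|^2 - L^2(c_k - A_k)\|\vu_k-\vv_{k-1}\|^2$, with the factor $\theta$ encoding the slack kept in the deterministic bound (recall $M = 3L^2(2+\theta)$).

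For the stochastic part, the terms that survive after substitution are of the form $\alpha_k\innp{F(\vv_k)-\tF(\vv_k),\,\vu_{k+1}-\vv_k}$ and $\beta_k\innp{F(\vv_{k-1})-\tF(\vv_{k-1}),\,\vu_k-\vv_{k-1}}$ (plus analogous bilinear terms coming from the correction quadratic $c_kL^2\|\vu_k-\vv_{k-1}\|^2$). To each I apply Young's inequality in the form $2\innp{a,b} \leq \tfrac{1}{\mu}\|a\|^2 + \mu\|b\|^2$, choosing the free constants so that the $\|\vu_{k+1}-\vv_k\|^2$ coefficient is absorbed into the deterministic surplus $\tfrac{\theta A_k}{M\eta_k^2}$ while leaving exactly $\tfrac{2A_k}{M\eta_k^2}\|\ve_k\|^2$, and similarly for $\ve_{k-1}$ so that only $A_k\|\ve_{k-1}\|^2$ remains. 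The main obstacle will be the bookkeeping in Step~2: ensuring that after all substitutions and cancellations, the coefficients of $\|\ve_k\|^2$ and $\|\ve_{k-1}\|^2$ are exactly $\tfrac{2A_k}{M\eta_k^2}$ and $A_k$, which pins down the unique choice of Young's inequality constants and explains the precise appearance of $M\eta_k^2$ in the denominator.
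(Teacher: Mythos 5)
Your high-level plan (mirror the deterministic two-step Halpern potential argument, track the stochastic errors $\ve_{k-1}$, $\ve_k$) is in the right direction, and you have correctly identified the roles of monotonicity, Lipschitzness, the parameter relations $A_k = \tfrac{B_k\eta_k}{2\lambda_k}$, $B_{k+1} = \tfrac{B_k}{1-\lambda_k}$, and the $\theta$-slack. However, the concrete mechanism you propose for where the error terms appear and how they are bounded does not match the paper's proof, and it is not obvious your version would yield the exact coefficients $\tfrac{2A_k}{M\eta_k^2}$ and $A_k$.

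The crucial structural difference: the paper's proof never produces linear-in-$\ve$ inner-product terms of the form $\alpha_k\innp{\ve_k,\vu_{k+1}-\vv_k}$, and it never applies Young's inequality to such cross terms. Instead, the stochastic errors enter as squared norms from the very first step, through two one-sided triangle bounds. Specifically, after writing the third iterate identity $\vu_{k+1}-\vv_k = -\eta_k\big(\tF(\vv_k)-\tF(\vv_{k-1})\big)$, the paper introduces the $\theta$-slack via
\begin{equation*}
\|F(\vu_{k+1}) - \tF(\vv_k)\|^2 + \theta L^2\|\vu_{k+1}-\vv_k\|^2 \leq 2\|F(\vu_{k+1})-F(\vv_k)\|^2 + 2\|F(\vv_k)-\tF(\vv_k)\|^2 + \theta L^2\|\vu_{k+1}-\vv_k\|^2,
\end{equation*}
which already yields $\|\ve_k\|^2$ with a constant factor of $2$, and then bounds $\|\tF(\vv_k)-\tF(\vv_{k-1})\|^2$ by a three-term triangle inequality routed through $F(\vu_k)$ and $F(\vv_{k-1})$, yielding $3\|\ve_{k-1}\|^2$. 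The $M\eta_k^2$ in the denominator of $\tfrac{2A_k}{M\eta_k^2}$ is then a deterministic scaling: the entire $\mathcal{T}_{[1]}$ bound is multiplied by $\tfrac{A_k}{M\eta_k^2}$ (with $M=3L^2(2+\theta)$) so that the coefficient of $\|F(\vu_{k+1})\|^2$ becomes comparable to $A_{k+1}$. It is not pinned down by a Young's-inequality parameter, as you suggest. Equally important, the perfect-square cancellation that disposes of the remaining quadratic form keeps $\tF(\vv_k)$ intact: the dropped term is $\|\sqrt{S_k^{11}}F(\vu_{k+1}) - \sqrt{S_k^{22}}\tF(\vv_k)\|^2$, with $\tF$, not $F$. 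Substituting $\tF(\vv_k) = F(\vv_k) - \ve_k$ before this step, as you propose, would break the perfect-square structure and generate cross terms $\innp{F(\vu_{k+1}),\ve_k}$ and $\innp{F(\vv_k),\ve_k}$, which do not have negative quadratic counterparts available to absorb them (the $\theta$-slack sits on $\|\vu_{k+1}-\vv_k\|^2$, not on $\|F(\vu_{k+1})\|^2$ or $\|F(\vv_k)\|^2$). Your plan might be salvageable with additional Lipschitz conversions, but it is a genuinely different and more delicate route than the paper takes, and the claim that the coefficients would come out exactly right needs verification rather than assertion.
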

 \begin{proof}
By the iteration scheme in Eq.~\eqref{eq:mono-uk}, we can deduce the following identities:
\begin{equation}\label{eq:mono-uk-diff}
\left\{\begin{aligned} 
\vu_{k+1} - \vu_{k} &= \lambda_{k}(\vu_{0} - \vu_{k}) - \eta_{k} \tF(\vv_{k}) \\ 
\vu_{k+1} - \vu_{k} &= \frac{\lambda_{k}}{1 - \lambda_{k}}(\vu_{0} - \vu_{k+1}) - \frac{\eta_{k}}{1 - \lambda_{k}} \tF(\vv_{k}) \\
\vu_{k+1} - \vv_{k} &= -\eta_{k}\left(\tF\left(\vv_{k}\right) - \tF\left(\vv_{k-1}\right)\right) \end{aligned}\right.
\end{equation}
Further, by the definition of the potential function $\mathcal{V}_k$, we can write
\begin{equation}\label{eq:mono-vk-diff}
\begin{aligned} 
\cv_k - \cv_{k+1}
= \;& \underbrace{A_{k}\left\|F\left(\vu_{k}\right)\right\|^{2} - A_{k+1}\left\|F\left(\vu_{k+1}\right)\right\|^{2}}_{\mathcal{T}_{[1]}} \\
& + \underbrace{B_{k}\innp{ F\left(\vu_{k}\right), \vu_{k} - \vu_{0}} - B_{k+1}\innp{ F\left(\vu_{k+1}\right), \vu_{k+1} - \vu_{0}}}_{\mathcal{T}_{[2]}} \\
& + c_{k} L^{2}\left\|\vu_{k} - \vv_{k-1}\right\|^{2}-c_{k+1} L^{2}\left\|\vu_{k+1} - \vv_{k}\right\|^{2}.
\end{aligned}
\end{equation}
To obtain the claimed bound, in the rest of the proof we focus on bounding $\mathcal{T}_{[1]}$ and $\mathcal{T}_{[2]}.$

To bound $\mathcal{T}_{[1]}$, by the Lipschitz continuity of $F$, we have 
\begin{equation*}
  \left\|F\left(\vu_{k + 1}\right) - F\left(\vv_k\right)\right\|^2 \leq L^2\left\|\vu_{k + 1} - \vv_k\right\|^2 = L^2\eta_k^2\left\|\tF\left(\vv_k\right) - \tF\left(\vv_{k - 1}\right)\right\|^2,
\end{equation*}
where in the last step we used the third identity from Eq.~\eqref{eq:mono-uk-diff}.
Further, for any $\theta > 0$ 
\begin{equation}\label{ineq:T1-bound-mono}
\begin{aligned}
\;& \left\|F\left(\vu_{k+1}\right) - \tF\left(\vv_{k}\right)\right\|^{2} + \theta L^{2}\left\|
\vu_{k+1} - \vv_{k}\right\|^{2} \\
\leq \;& 2\left\|F\left(\vu_{k+1}\right) - F\left(\vv_{k}\right)\right\|^{2} + 2\left\|F\left(\vv_{k}\right) - \tF\left(\vv_{k}\right)\right\|^{2} + \theta L^{2}\left\|
\vu_{k+1} - \vv_{k}\right\|^{2} \\
\leq & \eta_{k}^{2} L^{2}(2 + \theta)\left\|\tF\left(\vv_{k}\right) - \tF\left(\vv_{k-1}\right)\right\|^{2} + 2\left\|F\left(\vv_{k}\right) - \tF\left(\vv_{k}\right)\right\|^{2},
\end{aligned}
\end{equation}
where again in the last step we used the third identity from Eq.~\eqref{eq:mono-uk-diff}.
Notice that 
\begin{equation*}
\begin{aligned}
     \;& \left\|\tF\left(\vv_{k}\right) - \tF\left(\vv_{k-1}\right)\right\|^{2} \\
     = \;& \left\|\tF\left(\vv_{k}\right) - F\left(\vu_k\right) + F\left(\vu_k\right) - F\left(\vv_{k - 1}\right) + F\left(\vv_{k - 1}\right) - \tF\left(\vv_{k-1}\right)\right\|^{2} \\
    \leq \;& 3\left\|\tF\left(\vv_{k}\right) - F\left(\vu_k\right)\right\|^{2} + 3\left\|F\left(\vu_k\right) - F\left(\vv_{k - 1}\right)\right\|^{2} + 3\left\|F\left(\vv_{k - 1}\right) - \tF\left(\vv_{k - 1}\right)\right\|^{2} \\
    \leq \;& 3\Big(\left\|F\left(\vu_k\right)\right\|^2 - 2\innp{F\left(\vu_k\right), \tF\left(\vv_k\right)} + \left\|\tF\left(\vv_k\right)\right\|^2\Big) + 3L^2\left\|\vu_k - \vv_{k - 1}\right\|^2 \\
    & + 3\left\|F\left(\vv_{k - 1}\right) - \tF\left(\vv_{k - 1}\right)\right\|^{2}.
\end{aligned}
\end{equation*}
Let $M := 3L^2(2 + \theta)$. Expanding the term $\|{F(\vu_{k + 1}) - \tF(\vv_k)}\|^2$ on the LHS in Inequality~\eqref{ineq:T1-bound-mono} and combining with the inequality above, we have 
\begin{equation*}
\begin{aligned}
   \;& \left\|F\left(\vu_{k+1}\right)\right\|^{2} + \left\|\tF\left(\vv_{k}\right)\right\|^{2} - 2\innp{F\left(\vu_{k+1}\right), \tF\left(\vv_{k}\right)} +\theta L^{2}\left\|\vu_{k+1} - \vv_{k}\right\|^{2} \\
   \leq \;& M\eta_k^2\left(\left\|F\left(\vu_k\right)\right\|^2 - 2\innp{F\left(\vu_k\right), \tF\left(\vv_k\right)} + \left\|\tF\left(\vv_k\right)\right\|^2\right) + ML^2\eta_k^2\left\|\vu_k - \vv_{k - 1}\right\|^2 \\
   & + M\eta_k^2\left\|F\left(\vv_{k - 1}\right) - \tF\left(\vv_{k - 1}\right)\right\|^{2} +  2\left\|F\left(\vv_{k}\right) - \tF\left(\vv_{k}\right)\right\|^{2}.
\end{aligned}
\end{equation*}
Multiplying both sides by $\frac{A_k}{M\eta_k^2}$, rearranging this inequality and subtracting $A_{k + 1}\norm{F(\vu_{k + 1})}^2$ on both sides, we obtain 
\begin{equation}\label{ineq:mono-t1}
\begin{aligned}
\mathcal{T}_{[1]} = \;& A_k\left\|F\left(\vu_k\right)\right\|^2 - A_{k + 1}\norm{F(\vu_{k + 1})}^2   \\
\geq \;& \left(\frac{A_k}{M\eta_k^2} - A_{k + 1}\right)\left\|F\left(\vu_{k + 1}\right)\right\|^2 + \frac{A_k\left(1 - M\eta_k^2\right)}{M\eta_k^2}\left\|\tF\left(\vv_{k}\right)\right\|^2 \\
& - \frac{2A_k\left(1 - M\eta_k^2\right)}{M\eta_k^2}\innp{F\left(\vu_{k + 1}\right), \tF\left(\vv_k\right)} - 2A_k\innp{F\left(\vu_{k + 1}\right) - F\left(\vu_k\right), \tF\left(\vv_k\right)} \\
& + \frac{A_k\theta L^2}{M\eta_k^2}\norm{\vu_{k + 1} - \vv_k}^2 - A_k L^2\norm{\vu_k - \vv_{k - 1}}^2 \\
& - \frac{2A_k}{M\eta_k^2}\norm{F(\vv_k) - \tF(\vv_k)}^2 - A_k\norm{F(\vv_{k - 1}) - \tF(\vv_{k - 1})}^2.
\end{aligned}
\end{equation}

To bound $\mathcal{T}_{[2]}$, notice that $F$ is monotone, so we have 
\begin{equation*}
\innp{F\left(\vu_{k+1}\right), \vu_{k+1} - \vu_{k}} \geq \innp{F\left(\vu_{k}\right), \vu_{k+1} - \vu_{k}}.
\end{equation*}
Using the first line in Eq.~\eqref{eq:mono-uk-diff} for the RHS and the second line for the LHS, we can obtain 
\begin{equation*}
\begin{aligned}
\frac{\lambda_{k}}{1-\lambda_{k}}\innp{F\left(\vu_{k+1}\right), \vu_{0} - \vu_{k+1}} \geq \;&  \lambda_{k}\innp{F\left(\vu_{k}\right), \vu_{0} - \vu_{k}} - \eta_{k}\innp{F\left(\vu_{k}\right), \tF\left(\vv_{k}\right)} \\
& + \frac{\eta_{k}}{1 - \lambda_{k}}\innp{F\left(\vu_{k + 1}\right), \tF\left(\vv_{k}\right)}.
\end{aligned}
\end{equation*}
Multiplying both sides by $\frac{B_k}{\lambda_k}$ and using that $B_{k + 1} = \frac{B_k}{1 - \lambda_k}$ by Assumption~\ref{asp:para-mono}, we have 
\begin{equation}\label{ineq:mono-t2}
\begin{aligned} 
\mathcal{T}_{[2]} \geq \;& \frac{B_{k} \eta_{k}}{\lambda_{k}\left(1 - \lambda_{k}\right)}\innp{ F\left(\vu_{k+1}\right), \tF\left(\vv_{k}\right)} - \frac{B_{k} \eta_{k}}{\lambda_{k}}\innp{F\left(\vu_{k}\right), \tF\left(\vv_{k}\right)} \\ 
= \;& B_{k+1} \eta_{k}\innp{F\left(\vu_{k+1}\right), \tF\left(\vv_{k}\right)} + \frac{B_{k} \eta_{k}}{\lambda_{k}}\innp{ F\left(\vu_{k+1}\right) - F\left(\vu_{k}\right), \tF\left(\vv_{k}\right)}.
\end{aligned}
\end{equation}
Combining Inequalities~\eqref{ineq:mono-t1} and \eqref{ineq:mono-t2} and plugging the bounds into Eq.~\eqref{eq:mono-vk-diff}, we obtain 
\begin{equation*}
    \begin{aligned}
        \cv_k - \cv_{k + 1} \geq \;& \left(\frac{A_k}{M\eta_k^2} - A_{k + 1}\right)\left\|F\left(\vu_{k + 1}\right)\right\|^2 + \frac{A_k\left(1 - M\eta_k^2\right)}{M\eta_k^2}\left\|\tF\left(\vv_{k}\right)\right\|^2 \\
        & - 2\left(\frac{A_k\left(1 - M\eta_k^2\right)}{M\eta_k^2} - \frac{B_{k + 1}\eta_k}{2}\right)\innp{F\left(\vu_{k + 1}\right), \tF\left(\vv_k\right)} \\
        & + \left(\frac{B_k\eta_k}{\lambda_k} - 2A_k\right)\innp{F\left(\vu_{k + 1}\right) - F\left(\vu_k\right), \tF\left(\vv_k\right)} \\
        & + L^2\left(\frac{A_k\theta}{M\eta_k^2} - c_{k + 1}\right)\norm{\vu_{k + 1} - \vv_k}^2 + L^2\left(c_k - A_k\right)\norm{\vu_k - \vv_{k - 1}}^2 \\
        & - \frac{2A_k}{M\eta_k^2}\norm{F(\vv_k) - \tF(\vv_k)}^2 - A_k\norm{F(\vv_{k - 1}) - \tF(\vv_{k - 1})}^2.
    \end{aligned}
\end{equation*}
By Assumption~\ref{asp:para-mono}, we choose $A_k = \frac{B_k\eta_k}{2\lambda_k}$. Define: 
\begin{equation*}
\left\{
\begin{aligned}
S_{k}^{11} := \;& \frac{A_{k}}{M \eta_{k}^{2}} - A_{k+1} = \frac{B_{k}}{2M\lambda_{k}\eta_{k}} - \frac{B_{k}\eta_{k+1}}{2\left(1-\lambda_{k}\right)\lambda_{k+1}} \\
S_{k}^{22} := \;& \frac{A_{k}\left(1 - M\eta_{k}^{2}\right)}{M\eta_{k}^{2}} = \frac{B_{k}\left(1 - M\eta_{k}^{2}\right)}{2M\eta_{k}\lambda_{k}} \\
S_{k}^{12} := \;& \frac{A_{k}\left(1 - M\eta_{k}^{2}\right)}{M\eta_{k}^{2}} - \frac{B_{k+1}\eta_{k}}{2} = \frac{\left(1 - \lambda_{k} - M\eta_{k}^{2}\right)B_{k}}{2M\left(1 - \lambda_{k}\right)\lambda_{k}\eta_{k}}.
\end{aligned}\right.
\end{equation*}
Then, we obtain 
\begin{equation*}
    \begin{aligned}
        \cv_k - \cv_{k + 1} \geq \;& S_k^{11}\left\|F\left(\vu_{k + 1}\right)\right\|^2 + S_k^{22}\left\|\tF\left(\vv_{k}\right)\right\|^2 - 2S_k^{12}\innp{F\left(\vu_{k + 1}\right), \tF\left(\vv_k\right)} \\
        & + L^2\left(\frac{A_k\theta}{M\eta_k^2} - c_{k + 1}\right)\norm{\vu_{k + 1} - \vv_k}^2 + L^2\left(c_k - A_k\right)\norm{\vu_k - \vv_{k - 1}}^2 \\
        & - \frac{2A_k}{M\eta_k^2}\norm{F(\vv_k) - \tF(\vv_k)}^2 - A_k\norm{F(\vv_{k - 1}) - \tF(\vv_{k - 1})}^2.
    \end{aligned}
\end{equation*}


Suppose that $S_k^{11} \geq 0$, $S_k^{22} \geq 0$ and $\sqrt{S_k^{11}S_k^{22}} = S_k^{12}$. Then, we can conclude 
\begin{equation*}
    \begin{aligned}
        \cv_k - \cv_{k + 1} = \;& \norm{\sqrt{S_k^{11}}F\left(\vu_{k + 1}\right) - \sqrt{S_k^{22}}\tF\left(\vv_{k}\right)}^2 \\
        & + L^2\left(\frac{A_k\theta}{M\eta_k^2} - c_{k + 1}\right)\norm{\vu_{k + 1} - \vv_k}^2 + L^2\left(c_k - A_k\right)\norm{\vu_k - \vv_{k - 1}}^2 \\ 
        & - \frac{2A_k}{M\eta_k^2}\norm{F(\vv_k) - \tF(\vv_k)}^2 - A_k\norm{F(\vv_{k - 1}) - \tF(\vv_{k - 1})}^2 \\ 
        \geq \;& L^2\left(\frac{A_k\theta}{M\eta_k^2} - c_{k + 1}\right)\norm{\vu_{k + 1} - \vv_k}^2 + L^2\left(c_k - A_k\right)\norm{\vu_k - \vv_{k - 1}}^2 \\
        & - \frac{2A_k}{M\eta_k^2}\norm{F(\vv_k) - \tF(\vv_k)}^2 - A_k\norm{F(\vv_{k - 1}) - \tF(\vv_{k - 1})}^2.
    \end{aligned}
\end{equation*}
To complete the proof, let us argue that the assumptions that $S_k^{11} \geq 0$, $S_k^{22} \geq 0$ and $\sqrt{S_k^{11}S_k^{22}} = S_k^{12}$ we made above are valid. 
First, notice that $S_k^{11} \geq 0$ is equivalent to $\eta_{k + 1} \leq \frac{\lambda_{k + 1}\left(1 - \lambda_k\right)}{M\lambda_k\eta_k}$, and $S_k^{22} \geq 0$ is equivalent to $M\eta_k^2 \leq 1$, which are both included in Assumption~\ref{asp:para-mono}. Moreover, since $B_k > 0,$  $\sqrt{S_k^{11}S_k^{22}} = S_k^{12}$ is equivalent to 
\begin{equation*}
\frac{\left(1-M \eta_{k}^{2}\right)}{M \eta_{k}} \cdot\left(\frac{1}{M \eta_{k}}-\frac{\lambda_{k} \eta_{k+1}}{\left(1-\lambda_{k}\right) \lambda_{k+1}}\right) =\left(\frac{1-\lambda_{k}-M \eta_{k}^{2}}{M\left(1-\lambda_{k}\right) \eta_{k}}\right)^{2}, 
\end{equation*}
which is further equivalent to $\eta_{k+1} = \frac{\lambda_{k+1}\left(1-M \eta_{k}^{2}-\lambda_{k}^{2}\right)}{\lambda_{k}\left(1-\lambda_{k}\right)\left(1-M \eta_{k}^{2}\right)} \cdot \eta_{k}$, provided that $M \eta_{k}^{2} + \lambda_{k}^{2} \leq 1$. Both these inequalities hold by Assumption~\ref{asp:para-mono}, thus completing the proof.
\end{proof}

Motivated by Assumption \ref{asp:para-mono} and Lemma \ref{lemma:mono-diff}, we make the choice of $\lambda_k$ and $\eta_k$ as 
\begin{equation}\label{eq:mono-eta}
\lambda_{k}:=\frac{1}{k+2} \quad \text { and } \quad \eta_{k+1}:=\frac{\left(1-\lambda_{k}^{2}-M \eta_{k}^{2}\right) \lambda_{k+1} \eta_{k}}{\left(1-M \eta_{k}^{2}\right)\left(1-\lambda_{k}\right) \lambda_{k}},
\end{equation}
where $M = 3L^2\left(2 + \theta\right)$ and $0 < \eta_0 < \frac{1}{\sqrt{2M}}$. The sequence $\{\eta_k\}_{k \geq 1}$ given by Eq.~\eqref{eq:mono-eta} is actually non-increasing and has a positive limit. We summarize this result in the following lemma for completeness, and the proof can be found in \citep{tran2021halpern}.

\begin{lemma}\label{lemma:mono-eta}
Given $M > 0$, the sequence $\{\eta_k\}$ generated by Eq.~\eqref{eq:mono-eta} is non-increasing, i.e. $\eta_{k+1} \leq \eta_{k} \leq \eta_{0}<\frac{\sqrt{3}}{2\sqrt{M}}$. Moreover, if $0 < \eta_0 < \frac{1}{\sqrt{2M}}$, we have that $\eta_{*} := \lim _{k \rightarrow \infty} \eta_{k}$ exists and
\begin{equation}
    \eta_{*} \geq \underline{\eta}:=\frac{\eta_{0}\left(1-2 M \eta_{0}^{2}\right)}{1-M \eta_{0}^{2}} > 0.
\end{equation}
\end{lemma}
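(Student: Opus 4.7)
The plan is to first observe that the specific choice $\lambda_k = 1/(k+2)$ produces an algebraic cancellation in Eq.~\eqref{eq:mono-eta}: using $1-\lambda_k = (k+1)/(k+2)$ and $1-\lambda_k^2 = (k+1)(k+3)/(k+2)^2$, one finds $\frac{(1-\lambda_k^2)\lambda_{k+1}}{(1-\lambda_k)\lambda_k} = 1$. Consequently, the recursion collapses to
\[
\eta_{k+1} \;=\; \eta_k \cdot \frac{1 - M\eta_k^2\, t_k}{1 - M\eta_k^2}, \qquad t_k := \frac{(k+2)^2}{(k+1)(k+3)} = 1 + \frac{1}{(k+1)(k+3)},
\]
or equivalently $\eta_k - \eta_{k+1} = \frac{M\eta_k^3}{(1-M\eta_k^2)(k+1)(k+3)}$. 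This closed form is the engine of the proof.

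For monotonicity, I would argue by induction that $\eta_{k+1}\le\eta_k$ and $M\eta_k^2 \le M\eta_0^2 < 3/4$. The reciprocal $1/t_k = (k+1)(k+3)/(k+2)^2$ equals $3/4$ at $k=0$ and is strictly increasing to $1$, so the hypothesis $\eta_0 < \sqrt{3}/(2\sqrt{M})$ (which is $M\eta_0^2 < 3/4$), together with the inductive bound $\eta_k\le\eta_0$, guarantees $M\eta_k^2 < 1/t_k$; hence both numerator and denominator in the reduced recursion are positive. Since $t_k > 1$, the multiplier is strictly less than $1$, giving $\eta_{k+1}<\eta_k$ and closing the induction. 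Positivity together with monotonicity ensures that $\eta_* := \lim_k \eta_k \ge 0$ exists.

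To obtain the quantitative lower bound under the stronger hypothesis $M\eta_0^2 < 1/2$, I would telescope the explicit decrement derived above, using the monotone bounds $\eta_k \le \eta_0$ and $1-M\eta_k^2 \ge 1-M\eta_0^2$:
\[
\eta_0 - \eta_* \;=\; \sum_{k\ge 0}\frac{M\eta_k^3}{(1-M\eta_k^2)(k+1)(k+3)} \;\le\; \frac{M\eta_0^3}{1-M\eta_0^2}\sum_{k\ge 0}\frac{1}{(k+1)(k+3)}.
\]
Partial fractions give $\frac{1}{(k+1)(k+3)}=\tfrac12\bigl(\tfrac{1}{k+1}-\tfrac{1}{k+3}\bigr)$, so the series telescopes to $3/4$. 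Therefore $\eta_0 - \eta_* \le \frac{3M\eta_0^3}{4(1-M\eta_0^2)} \le \frac{M\eta_0^3}{1-M\eta_0^2}$, and rearranging yields $\eta_* \ge \eta_0 - \frac{M\eta_0^3}{1-M\eta_0^2} = \frac{\eta_0(1-2M\eta_0^2)}{1-M\eta_0^2} = \underline{\eta}$, strictly positive precisely when $M\eta_0^2 < 1/2$.

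The main obstacle is spotting the cancellation $(1-\lambda_k^2)\lambda_{k+1} = (1-\lambda_k)\lambda_k$ in the first step; without it the recursion appears opaque and neither monotonicity nor the sharp bound on $\eta_*$ is transparent. Once the simplified form is in hand, the remainder reduces to a short induction followed by a standard telescoping estimate, and the gap between the upper bound $3/4$ on the sum and the coefficient $1$ used in the final rearrangement provides the slack that makes the stated bound $\underline{\eta}$ valid (and not sharp).
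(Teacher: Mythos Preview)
Your proof is correct. The paper does not actually prove this lemma; it states the result ``for completeness'' and defers to \cite{tran2021halpern} for the argument, so there is no in-paper proof to compare against. Your key observation that the specific choice $\lambda_k = 1/(k+2)$ forces the identity $\frac{(1-\lambda_k^2)\lambda_{k+1}}{(1-\lambda_k)\lambda_k}=1$, collapsing the recursion to $\eta_{k+1}=\eta_k\cdot\frac{1-M\eta_k^2 t_k}{1-M\eta_k^2}$ with $t_k=(k+2)^2/((k+1)(k+3))$, is exactly the right simplification; the induction for monotonicity and the telescoping bound using $\sum_{k\ge 0}\frac{1}{(k+1)(k+3)}=\tfrac{3}{4}\le 1$ then go through cleanly. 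Your argument is self-contained and gives slightly more than is claimed (you obtain the sharper intermediate bound $\eta_0-\eta_*\le \tfrac{3}{4}\cdot\frac{M\eta_0^3}{1-M\eta_0^2}$ before relaxing to the stated $\underline{\eta}$).
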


We now prove the results for the iteration complexity and the corresponding oracle complexity for Algorithm~\ref{alg:monotone}.
\rateMono*
\begin{proof}
We start with verifying that the conditions in Eq.~\eqref{eq:asp-mono} of Lemma \ref{lemma:mono-diff} are all satisfied. By Eq.~\eqref{eq:mono-eta} and Lemma \ref{lemma:mono-eta}, we know that $\{\eta_k\}$ is non-increasing and $\eta_* = \lim _{k \rightarrow \infty} \eta_{k} > 0$, so the first condition in Eq.~\eqref{eq:asp-mono} is satisfied. Also, as $0 < \eta_k \leq \eta_0 \leq \frac{1}{\sqrt{2M}}$, we have $M\eta_k^2 \leq M\eta_0^2 \leq \frac{1}{2} < 1 - \frac{1}{(k + 2)^2}$. So the second condition in Eq.~\eqref{eq:asp-mono} holds. Moreover, since $\eta_{k + 1} \leq \eta_k$, the third condition holds if $\eta_k^2 \leq \frac{\lambda_{k + 1}\left(1 - \lambda_k\right)}{M\lambda_k} = \frac{k + 1}{M(k + 3)}$. Due to the fact that $\frac{k + 1}{M(k + 3)} \geq \frac{1}{3M}$ and $\eta_k \leq \eta_0$ for all $k \geq 1$, we can have this condition hold if $\eta_0 \leq \frac{1}{\sqrt{3M}}$. Hence all the conditions hold with our parameter update and letting $\eta_0 \leq \frac{1}{\sqrt{3M}}$.

Let $c_k = A_k$, then we obtain 
\begin{equation*}
\begin{aligned}
\;& L^{2}\left(\frac{A_{k}\theta}{M\eta_{k}^{2}} - c_{k + 1}\right)\norm{\vu_{k + 1} - \vv_k}^{2} + L^{2}\left(c_{k} - A_{k}\right)\norm{\vu_k - \vv_{k - 1}}^{2} \\
= \;& L^{2}\left(\frac{A_{k}\theta}{M\eta_{k}^{2}} - A_{k + 1}\right)\norm{\vu_{k + 1} - \vv_k}^{2} \\
= \;& \frac{L^{2} B_{k}}{2}\left(\frac{\theta}{M \lambda_{k} \eta_{k}}-\frac{\eta_{k+1}}{\lambda_{k+1}\left(1-\lambda_{k}\right)}\right)\norm{\vu_{k + 1} - \vv_k}^{2}.
\end{aligned}
\end{equation*}
Here we choose the parameters such that $\eta_{k} \eta_{k+1} \leq \frac{\theta \lambda_{k+1}\left(1-\lambda_{k}\right)}{M \lambda_{k}}=\frac{\theta(k+1)}{M(k+3)}$ to ensure $\left(\frac{\theta}{M \lambda_{k} \eta_{k}}-\frac{\eta_{k+1}}{\lambda_{k+1}\left(1-\lambda_{k}\right)}\right) \geq 0$. 
Since $\eta_{k +1} \leq \eta_k$, the required inequality holds if $\eta_k \leq \sqrt{\frac{\theta(k+1)}{M(k+3)}}$, which is satisfied if we let $\eta_0 \leq \sqrt{\frac{\theta}{3M}}$ as $\eta_k \leq \eta_0$ for all $k \geq 1$.

Combining the two conditions on $\eta_0$, and choosing $\theta = 1$, we have 
\begin{equation*}
\eta_{0} \leq \frac{1}{\sqrt{3 M}} = \frac{1}{3L \sqrt{2 + \theta}} = \frac{1}{3L \sqrt{3}},
\end{equation*}
which is required by Algorithm~\ref{alg:monotone} and thus satisfied. 

Hence, with $0 < \eta_0 \leq \frac{1}{3L \sqrt{3}}$, we have 
\begin{equation}\label{ineq:mono-vk}
    \begin{aligned}
        \cv_k - \cv_{k + 1} \geq \;& L^2\left(\frac{A_k\theta}{M\eta_k^2} - c_{k + 1}\right)\norm{\vu_{k + 1} - \vv_k}^2 + L^2\left(c_k - A_k\right)\norm{\vu_k - \vv_{k - 1}}^2 \\
        & - \frac{2A_k}{M\eta_k^2}\norm{F(\vv_k) - \tF(\vv_k)}^2 - A_k\norm{F(\vv_{k - 1}) - \tF(\vv_{k - 1})}^2 \\
        \geq \;& - \frac{2A_k}{M\eta_k^2}\norm{F(\vv_k) - \tF(\vv_k)}^2 - A_k\norm{F(\vv_{k - 1}) - \tF(\vv_{k - 1})}^2.
    \end{aligned}
\end{equation}
Consider $\cc_k = A_k\norm{F(\vu_k)}^2 + B_k\innp{F(\vu_k), \vu_k - \vu_0}$. Then: 
\begin{equation*}
    \begin{aligned}
        \cc_k
        \overset{(\romannumeral1)}{\geq} \;& A_k\norm{F(\vu_k)}^2 + B_k\innp{F(\vu_k) - F(\vu^*), \vu_k - \vu^*} + B_k\innp{F(\vu_k), \vu^* - \vu_0} \\
        \overset{(\romannumeral2)}{\geq} \;& A_k\norm{F(\vu_k)}^2 - \frac{A_k}{2}\norm{F(\vu_k)}^2 - \frac{B_k^2}{2A_k}\norm{\vu_0 - \vu^*}^2 \\
        = \;& \frac{A_k}{2}\norm{F(\vu_k)}^2 - \frac{B_k^2}{2A_k}\norm{\vu_0 - \vu^*}^2, 
    \end{aligned}
\end{equation*}
where $(\romannumeral1)$ is due to $\vu^*$ being the solution to the monotone inclusion problem so an \eqref{def:SVI} solution as well, and we use monotonicity and Young's Inequality for $(\romannumeral2)$. So we obtain 
\begin{equation*}
    \begin{aligned}
        \frac{A_k}{2}\norm{F(\vu_k)}^2 + A_k L^2\norm{\vu_k - \vv_{k - 1}}^2 \leq \;&  \cc_k + A_k L^2\norm{\vu_k - \vv_{k - 1}}^2 + \frac{B_k^2}{2A_k}\norm{\vu_0 - \vu^*}^2 \\
        = \;& \cv_k + \frac{B_k^2}{2A_k}\norm{\vu_0 - \vu^*}^2.
    \end{aligned}
\end{equation*}

Since $B_{k + 1} = \frac{B_k}{1 - \lambda_k}$ and $\lambda_k = \frac{1}{k + 2}$, we have $B_k = (k + 1)B_0$ for any $B_0 > 0$. Then we obtain $A_k = c_k = \frac{B_k\eta_k}{2\lambda_k} = \frac{B_0(k + 1)(k + 2)\eta_k}{2}$. By Lemma \ref{lemma:mono-eta}, we know that $0 < \underline{\eta} \leq \eta_* \leq \eta_k \leq \eta_0$, so $\frac{B_0(k + 1)(k + 2)\underline{\eta}}{2} \leq A_k = c_k \leq \frac{B_0(k + 1)(k + 2)\eta_0}{2}$. By Inequality~\eqref{ineq:mono-vk} and noticing $\vv_{-1} = \vu_0$, we have 
\begin{equation*}
    \begin{aligned}
        \;& \frac{B_0\underline{\eta}(k + 1)(k + 2)}{4}\norm{F(\vu_k)}^2 + \frac{B_0\underline{\eta}L^2(k + 1)(k + 2)}{2}\norm{\vu_k - \vv_{k - 1}}^2 \\
        \leq \;& \frac{A_k}{2}\norm{F(\vu_k)}^2 + A_k L^2\norm{\vu_k - \vv_{k - 1}}^2 \\
        \leq \;& \cv_k + \frac{B_k^2}{2A_k}\norm{\vu_0 - \vu^*}^2 \\
        \leq \;& \cv_{k - 1} + \frac{B_k^2}{2A_k}\norm{\vu_0 - \vu^*}^2 + \frac{2A_{k - 1}}{M\eta_{k - 1}^2}\norm{F(\vv_{k - 1}) - \tF(\vv_{k - 1})}^2 + A_{k - 1}\norm{F(\vv_{k - 2}) - \tF(\vv_{k - 2})}^2.
    \end{aligned}
\end{equation*}
Unrolling this recursive bound down to $\cv_0$, we obtain 
\begin{equation*}
    \begin{aligned}
        \;& \frac{B_0\underline{\eta}(k + 1)(k + 2)}{4}\norm{F(\vu_k)}^2 + \frac{B_0\underline{\eta}L^2(k + 1)(k + 2)}{2}\norm{\vu_k - \vv_{k - 1}}^2 \\
        \leq \;& \cv_0 + \frac{B_k^2}{2A_k}\norm{\vu_0 - \vu^*}^2 + \sum_{i = 0}^{k - 1}\frac{2A_{i}}{M\eta_{i}^2}\norm{F(\vv_{i}) - \tF(\vv_{i})}^2 + \sum_{i = 0}^{k - 1}A_{i}\norm{F(\vv_{i - 1}) - \tF(\vv_{i - 1})}^2 \\
        \overset{(\romannumeral1)}{\leq} \;& B_0\eta_0\norm{F(\vu_0)}^2 + \frac{B_0(k + 1)^2}{\underline{\eta}(k + 1)(k + 2)}\norm{\vu_0 - \vu^*}^2 \\
        & + \sum_{i = 0}^{k - 1}\frac{B_0(i + 1)(i + 2)}{M\underline{\eta}}\norm{F(\vv_{i}) - \tF(\vv_{i})}^2 \\
        & + \sum_{i = 0}^{k - 1}\frac{B_0(i + 1)(i + 2)\eta_0}{2}\norm{F(\vv_{i - 1}) - \tF(\vv_{i - 1})}^2, 
    \end{aligned}
\end{equation*}
where we plug in the bound for $A_i$ and $\eta_i$ in $(\romannumeral1)$.
Taking expectation with respect to all randomness on both sides and using the variance bound from Corollary~\ref{cor:variance-bound}, we obtain that 
\begin{equation*}
\begin{aligned}
\;& \ee\left[\norm{F(\vu_k)}^2 + 2L^2\norm{\vu_k - \vv_{k - 1}}^2\right] \\
\leq \;& \frac{4}{B_0\underline{\eta}(k + 1)(k + 2)}\bigg[B_0\eta_0\norm{F(\vu_0)}^2 + \frac{B_0(k + 1)^2}{\underline{\eta}(k + 1)(k + 2)}\norm{\vu_0 - \vu^*}^2 \\ 
& + \sum_{i = 0}^{k - 1}\frac{B_0(i + 1)(i + 2)}{M\underline{\eta}}\ee \Big[\norm{F(\vv_{i}) - \tF(\vv_{i})}^2\Big] \\
& + \sum_{i = 0}^{k - 1}\frac{B_0(i + 1)(i + 2)\eta_0}{2}\ee \Big[\norm{F(\vv_{i - 1}) - \tF(\vv_{i - 1})}^2\Big]\bigg] \\
\overset{(\romannumeral1)}{\leq} \;& \frac{4}{\underline{\eta}(k + 1)(k + 2)}\bigg[\left(L^2\eta_0 + \frac{1}{\underline{\eta}}\right)\norm{\vu_0 - \vu^*}^2 + \left(\frac{2}{M\underline{\eta}} + \eta_0 + 3\eta_0\right)\frac{\epsilon^2}{8} \\
& + \sum_{i = 1}^{k - 1}\frac{(i + 1)(i + 2)}{M\underline{\eta}}\frac{\epsilon^2}{i} + \sum_{i = 2}^{k - 1}\frac{(i + 1)(i + 2)\eta_0}{2}\frac{\epsilon^2}{i - 1}\bigg] \\
\overset{(\romannumeral2)}{\leq} \;& \frac{4\left(L^2\eta_0\underline{\eta} + 1\right)\norm{\vu_0 - \vu^*}^2}{\underline{\eta}^2(k + 1)(k + 2)} + \frac{1 + 2M\underline{\eta}\eta_0}{M\underline{\eta}(k + 1)(k + 2)}\epsilon^2 \\
& + \frac{4(k - 1)(k + 4)}{M\underline{\eta}^2(k + 1)(k + 2)}\epsilon^2 + \frac{4\eta_0(k - 2)(k + 3)}{\underline{\eta}(k + 1)(k + 2)}\epsilon^2 \\
\overset{(\romannumeral3)}{\leq} \;& \frac{4\left(L^2\eta_0\underline{\eta} + 1\right)\norm{\vu_0 - \vu^*}^2}{\underline{\eta}^2(k + 1)(k + 2)} + \frac{\left(1 + 2M\underline{\eta}\eta_0\right)}{M\underline{\eta}^2(k + 1)(k + 2)}\epsilon^2 + \frac{4\left(1 + M\eta_0\underline{\eta}\right)}{M\underline{\eta}^2}\epsilon^2, 
\end{aligned}
\end{equation*}
where we use Lipschitz property and variance bounds by variance reduction for $(\romannumeral1)$. For $(\romannumeral2)$, we use the fact that $\frac{i + 1}{i} \leq 2$ and $\frac{i + 2}{i - 1} \leq 4$ and sum over $2(i + 1)$, respectively. Moreover, $(\romannumeral3)$ is due to $\frac{(k - 1)(k + 4)}{(k + 1)(k + 3)} \leq 1$ and $\frac{(k - 2)(k + 3)}{(k + 1)(k + 2)} \leq 1$ and by combining the last two terms.

When $k \geq \frac{\sqrt{\Lambda_0}}{\sqrt{\Lambda_1}\epsilon} = \co\Big(\frac{L\norm{\vu_0 - \vu^*}}{\epsilon}\Big)$, where $\Lambda_0 = \frac{4\left(L^2\eta_0\underline{\eta} + 1\right)\norm{\vu_0 - \vu^*}^2}{\underline{\eta}^2}$ and $\Lambda_1 = \frac{5\left(1 + M\underline{\eta}\eta_0\right)}{M\underline{\eta}^2}$, we have 
\begin{equation*}
    \begin{aligned}
        \ee\left[\norm{F(\vu_k)}^2 + 2L^2\norm{\vu_k - \vv_{k - 1}}^2\right] \leq \;& \frac{\Lambda_0}{(k + 1)(k + 2)} + \Lambda_1\epsilon^2 \leq 2\Lambda_1\epsilon^2.
    \end{aligned}
\end{equation*}
Claimed stochastic oracle complexity follows from Lemma~\ref{coro:mono-complexity} below.
\end{proof}
\begin{restatable}{lemma}{complexityMono}
\label{coro:mono-complexity}
Let $\vu_0 \in \rr^d$ be an arbitrary initial point and assume that iterates $\vu_k$ evolve according to Algorithm~\ref{alg:monotone}. Then, Algorithm~\ref{alg:monotone}
returns a point $\vu_N$ such that $\ee\big[\norm{F(\vu_N)}^2\big] \leq 2\Lambda_1\epsilon^2$ after at most $\co\big(\frac{\sigma^2L\norm{\vu_0 - \vu^*} + L^3\norm{\vu_0 - \vu^*}^3}{\epsilon^3}\big)$ stochastic queries to $F$.
\end{restatable}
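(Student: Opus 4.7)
The plan is to mirror the argument for Corollary~\ref{thm:complexity} from the cocoercive case. Let $m_k$ denote the number of stochastic queries to $\hF$ incurred by the PAGE update in iteration $k$ of Algorithm~\ref{alg:monotone}. Conditioning on $\cf_{k-2}$, taking total expectation, and using $\lceil x\rceil\leq x+1$, $p_{k-1}=2/k$, and the algorithm's parameter choices yields
\[
\ee[m_k]\leq \frac{8\sigma^2}{\epsilon^2} + \frac{16(1-p_{k-1})L^2\,\ee[\|\vv_{k-1}-\vv_{k-2}\|^2]}{p_{k-1}^2\,\epsilon^2} + 2 = \co\Big(\tfrac{\sigma^2+k^2L^2\,\ee[\|\vv_{k-1}-\vv_{k-2}\|^2]}{\epsilon^2}\Big).
\]
Combined with the iteration bound $N=\co(L\|\vu_0-\vu^*\|/\epsilon)$ from Theorem~\ref{thm:mono-rate}, the task reduces to proving a two-step analogue of Lemma~\ref{lemma:uk}: namely, $\ee[\|\vv_{k-1}-\vv_{k-2}\|^2]=\co(\|\vu_0-\vu^*\|^2/k^2+\epsilon^2/L^2)$ for $1\leq k\leq N$.

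To establish the latter, I would use the decomposition $\vv_{k-1}-\vv_{k-2}=(\vv_{k-1}-\vu_k)+(\vu_k-\vu_{k-1})+(\vu_{k-1}-\vv_{k-2})$ together with $\|\sum_{i=1}^3 x_i\|^2\leq 3\sum_{i=1}^3\|x_i\|^2$. The first and third summands are directly controlled by Theorem~\ref{thm:mono-rate}, which gives $\ee[\|\vu_j-\vv_{j-1}\|^2]=\co(\|\vu_0-\vu^*\|^2/(L^2 j^2)+\epsilon^2/L^2)$. For the middle term, from $\vu_k-\vu_{k-1}=\lambda_{k-1}(\vu_0-\vu_{k-1})-\eta_{k-1}\tF(\vv_{k-1})$ and Young's inequality, $\|\vu_k-\vu_{k-1}\|^2\leq 2\lambda_{k-1}^2\|\vu_0-\vu_{k-1}\|^2+2\eta_{k-1}^2\|\tF(\vv_{k-1})\|^2$. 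The second summand is $\co(\|\vu_0-\vu^*\|^2/(L^2k^2)+\epsilon^2/L^2)$ in expectation: combining $\|F(\vv_{k-1})\|^2\leq 2\|F(\vu_k)\|^2+2L^2\|\vu_k-\vv_{k-1}\|^2$ (Lipschitzness), Theorem~\ref{thm:mono-rate}, Corollary~\ref{cor:variance-bound}, and the identity $\ee[\|\tF(\vv_{k-1})\|^2]\leq 2\ee[\|F(\vv_{k-1})\|^2]+2\ee[\|F(\vv_{k-1})-\tF(\vv_{k-1})\|^2]$ yields $\ee[\|\tF(\vv_{k-1})\|^2]=\co(\|\vu_0-\vu^*\|^2/k^2+\epsilon^2)$, which multiplied by $\eta_{k-1}^2\leq\eta_0^2=\co(1/L^2)$ gives the claimed rate.

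The main obstacle is bounding $\ee[\|\vu_0-\vu_{k-1}\|^2]=\co(\|\vu_0-\vu^*\|^2)$ so that $\lambda_{k-1}^2\,\ee[\|\vu_0-\vu_{k-1}\|^2]=\co(\|\vu_0-\vu^*\|^2/k^2)$; unlike the single-step cocoercive iteration, here iterate boundedness requires a dedicated argument. My plan is to rewrite the update as $(k+1)(\vu_k-\vu_0)=k(\vu_{k-1}-\vu_0)-(k+1)\eta_{k-1}\tF(\vv_{k-1})$ and telescope from $\vu_0-\vu_0=\zeros$ to obtain $\vu_k-\vu_0=-\frac{1}{k+1}\sum_{i=1}^{k}(i+1)\eta_{i-1}\tF(\vv_{i-1})$. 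Applying the generalized Young's inequality, $\|\vu_k-\vu_0\|^2\leq\frac{k}{(k+1)^2}\sum_{i=1}^{k}(i+1)^2\eta_{i-1}^2\|\tF(\vv_{i-1})\|^2$, and using the bound $\ee[\|\tF(\vv_i)\|^2]=\co(\|\vu_0-\vu^*\|^2/i^2+\epsilon^2)$ derived above, the sum evaluates to $\ee[\|\vu_k-\vu_0\|^2]=\co(\|\vu_0-\vu^*\|^2/L^2+k^2\epsilon^2/L^2)=\co(\|\vu_0-\vu^*\|^2)$ throughout the range $k\leq N$, since $k\epsilon=\co(L\|\vu_0-\vu^*\|)$ there.

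Assembling the pieces, $\ee[\|\vv_{k-1}-\vv_{k-2}\|^2]=\co(\|\vu_0-\vu^*\|^2/k^2+\epsilon^2/L^2)$, and hence $\ee[m_k]=\co((\sigma^2+L^2\|\vu_0-\vu^*\|^2)/\epsilon^2+k^2)$. Accounting for the initial batch $S_1^{(-1)}=\co(\sigma^2/\epsilon^2)$ and summing over $N=\co(L\|\vu_0-\vu^*\|/\epsilon)$ iterations,
\[
\ee[M]=\co\big(N(\sigma^2+L^2\|\vu_0-\vu^*\|^2)/\epsilon^2+N^3\big)=\co\big((\sigma^2L\|\vu_0-\vu^*\|+L^3\|\vu_0-\vu^*\|^3)/\epsilon^3\big),
\]
with the $N^3$ piece arising from $\sum_k k^2$ absorbed into the $L^3\|\vu_0-\vu^*\|^3/\epsilon^3$ term, establishing the claim.
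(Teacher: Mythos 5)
Your proposal is substantively correct and follows the paper's high-level structure: both start from the conditional bound $\ee[m_k]\leq \frac{8\sigma^2}{\epsilon^2}+\frac{16(1-p_{k-1})L^2\ee[\|\vv_{k-1}-\vv_{k-2}\|^2]}{p_{k-1}^2\epsilon^2}+2$, reduce matters to showing $\ee[\|\vv_{k-1}-\vv_{k-2}\|^2]=\co(\|\vu_0-\vu^*\|^2/k^2+\epsilon^2/L^2)$, and use the same three-term decomposition of $\vv_{k-1}-\vv_{k-2}$. Where your bookkeeping diverges: the paper rewrites the boundary summands $\vv_{k-1}-\vu_k$ and $\vu_{k-1}-\vv_{k-2}$ as $\eta$-scaled $\tF$-differences via $\vu_j-\vv_{j-1}=-\eta_{j-1}(\tF(\vv_{j-1})-\tF(\vv_{j-2}))$, unrolls $\vu_k-\vu_{k-1}$ into a single weighted sum over $\tF(\vv_i)$, and applies Young once to the combined sum, whereas you bound the boundary summands directly via the $L^2\|\vu_j-\vv_{j-1}\|^2$ term of Theorem~\ref{thm:mono-rate}, split the middle term into $\lambda_{k-1}(\vu_0-\vu_{k-1})-\eta_{k-1}\tF(\vv_{k-1})$, and isolate iterate boundedness $\ee[\|\vu_k-\vu_0\|^2]=\co(\|\vu_0-\vu^*\|^2)$ as a standalone telescoping claim. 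Your identity $\vu_k-\vu_0=-\frac{1}{k+1}\sum_{i=1}^k(i+1)\eta_{i-1}\tF(\vv_{i-1})$ is algebraically the same telescoping as the paper's expansion of $\vu_{k+1}-\vu_k$; what your modularization buys is that iterate boundedness — implicit in the paper — becomes an explicit, reusable step, which is arguably more transparent. Two minor caveats. First, several of your $\co(\cdot)$ estimates drop an $L^2$: since $\Lambda_0=\co(L^2\|\vu_0-\vu^*\|^2)$ and $\underline{\eta}=\co(1/L)$, Theorem~\ref{thm:mono-rate} gives $\ee[\|\vu_j-\vv_{j-1}\|^2]=\co(\|\vu_0-\vu^*\|^2/j^2+\epsilon^2/L^2)$, not $\co(\|\vu_0-\vu^*\|^2/(L^2j^2)+\epsilon^2/L^2)$, and likewise $\ee[\|\tF(\vv_i)\|^2]=\co(L^2\|\vu_0-\vu^*\|^2/i^2+\epsilon^2)$; with the correct factors your boundedness conclusion $\ee[\|\vu_k-\vu_0\|^2]=\co(\|\vu_0-\vu^*\|^2+k^2\epsilon^2/L^2)$ follows directly, without needing $k\epsilon=\co(L\|\vu_0-\vu^*\|)$ to absorb a spurious $\|\vu_0-\vu^*\|^2/L^2$ term (which, as written, would require $L=\Omega(1)$). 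Second, Theorem~\ref{thm:mono-rate} only supplies the $\|\vu_j-\vv_{j-1}\|^2$ bound for $j\geq 2$, so the small-$j$ cases need brief separate treatment, which the paper handles explicitly. Neither caveat changes the final $\co\big(\frac{\sigma^2L\norm{\vu_0-\vu^*}+L^3\norm{\vu_0-\vu^*}^3}{\epsilon^3}\big)$ bound.
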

\begin{proof}
Let $m_k$ be the number of stochastic queries made by the variance reduction method at iteration $k$ for $k \geq 1$. Conditional on $\cf_{k - 1}$, we have 
\begin{align*}
    \ee\big[m_{k + 1} | \cf_{k - 1}\big] = \;& \ee\Big[p_k S_1^{(k)} + 2(1 - p_k)S_2^{(k)} \Big| \cf_{k - 1}\Big] \\
    = \;& p_k\Big\lceil\frac{8\sigma^2}{p_k\epsilon^2}\Big\rceil + 2(1 - p_k)\Big\lceil\frac{8L^2\norm{\vv_k - \vv_{k - 1}}^2}{p_k^2\epsilon^2}\Big\rceil \\
    \overset{(\romannumeral1)}{\leq} \;& p_k\Big(\frac{8\sigma^2}{p_k\epsilon^2} + 1\Big) + 2(1 - p_k)\Big(\frac{8L^2\norm{\vv_k - \vv_{k - 1}}^2}{p_k^2\epsilon^2} + 1\Big), 
\end{align*}
where $(\romannumeral1)$ is due to the fact that $\lceil x \rceil \leq x + 1$ for any $x \in \rr$. Taking expectation with respect to all randomness on both sides, and rearranging the terms, we obtain \begin{align*}
    \ee[m_{k + 1}] \leq \frac{8\sigma^2}{\epsilon^2} + \frac{16(1 - p_k)L^2\ee\big[\norm{\vv_k - \vv_{k - 1}}^2\big]}{p_k^2\epsilon^2} + 2.
\end{align*}
With $m_{0} = m_{1} = S_1^{(0)} = \big\lceil\frac{8\sigma^2}{\epsilon^2}\big\rceil$, let $M$ be the total number of stochastic queries up to iteration $N$ such that $\ee[\norm{F(\vu_k)}] \leq 2\Lambda_1\epsilon$ for all $k \geq N$, we have 
\begin{align}
    \ee[M] = \ee\Big[\sum_{k = 0}^{N}m_k\Big] = \;& 2\Big\lceil\frac{8\sigma^2}{\epsilon^2}\Big\rceil + \ee\Big[\sum_{k = 2}^{N} m_k\Big] \notag \\
    \leq \;& \frac{16\sigma^2}{\epsilon^2} + 2 + \sum_{k = 1}^{N} \Big(\frac{8\sigma^2}{\epsilon^2} + \frac{16(1 - p_k)L^2\ee\big[\norm{\vv_k - \vv_{k - 1}}^2\big]}{p_k^2\epsilon^2} + 2\Big) \notag \\
    \overset{(\romannumeral1)}{\leq} \;& \frac{8\sigma^2\Delta}{\epsilon^3} + \frac{16\sigma^2}{\epsilon^2} + \frac{2\Delta}{\epsilon} + \frac{16L^2}{\epsilon^2}\sum_{k = 1}^{N}\frac{(1 - p_k)\ee\big[\norm{\vv_k - \vv_{k - 1}}^2\big]}{p_k^2}, 
\end{align}
where $(\romannumeral1)$ follows from $N \leq \frac{\Delta}{\epsilon}$ with $\Delta = \sqrt{\frac{\Lambda_0}{\Lambda_1}} + \epsilon$, and $1 - p_k \leq 1$. 

Then we come to bound $\ee\big[\norm{\vv_k - \vv_{k - 1}}^2\big]$. Notice that for $k \geq 1$
\begin{equation*}
    \begin{aligned}
        \vv_k - \vv_{k - 1} = \;& \vv_k - \vu_{k + 1} + \vu_{k + 1} - \vu_{k} + \vu_{k} - \vv_{k - 1} \\
        \overset{(\romannumeral1)}{=} \;& \eta_k\big(\tF(\vv_k) - \tF(\vv_{k - 1})\big) - \eta_{k - 1}\big(\tF(\vv_{k - 1}) - \tF(\vv_{k - 2})\big) + \vu_{k + 1} - \vu_{k} \\
        = \;& \eta_k\tF(\vv_k) - \big(\eta_k + \eta_{k - 1}\big)\tF(\vv_{k - 1}) + \eta_{k - 1}\tF(\vv_{k - 2}) + \vu_{k + 1} - \vu_{k},
    \end{aligned}
\end{equation*}
where $(\romannumeral1)$ is based on the third line in Eq.~\eqref{eq:mono-uk-diff}. To estimate $\vu_{k + 1} - \vu_k$, we recursively use the first line in Eq.~\eqref{eq:mono-uk-diff}, and obtain for $k \geq 2$,
\begin{equation*}
    \begin{aligned}
        \vu_{k + 1} - \vu_{k} = \;& \lambda_{k}\left(\vu_0 - \vu_{k}\right) - \eta_{k}\tF\left(\vv_{k}\right) \\
        = \;& \lambda_{k}\left(1 - \lambda_{k - 1}\right)\left(\vu_0 - \vu_{k - 1}\right) + \lambda_{k}\eta_{k - 1}\tF\left(\vv_{k - 1}\right) - \eta_{k}\tF\left(\vv_{k}\right) \\
        = \;& \lambda_{k}\sum_{i = 0}^{k - 2}\Big(\prod_{j = i + 1}^{k - 1}\left(1 - \lambda_j\right)\Big)\eta_i\tF(\vv_{i}) + \lambda_{k}\eta_{k - 1}\tF\left(\vv_{k - 1}\right) - \eta_{k}\tF\left(\vv_{k}\right) \\
        = \;& \sum_{i = 0}^{k - 2}\frac{i + 2}{(k + 2)(k + 1)}\eta_i\tF(\vv_i) + \frac{\eta_{k - 1}}{k + 2}\tF(\vv_{k - 1}) - \eta_{k}\tF(\vv_{k}).
    \end{aligned}
\end{equation*}
So we have 
\begin{align}
    \vu_{k + 1} - \vu_{k} = 
    \begin{cases}
         -\eta_0\tF(\vv_0) & \text{if } k = 0, \\
         & \\
         \lambda_1\eta_0\tF(\vv_0) - \eta_1\tF(\vv_1) & \text{if } k = 1, \\
         & \\
         \sum_{i = 0}^{k - 2}\frac{i + 2}{(k + 2)(k + 1)}\eta_i\tF(\vv_i) + \frac{\eta_{k - 1}}{k + 2}\tF(\vv_{k - 1}) - \eta_{k}\tF(\vv_{k}) & \text{if } k \geq 2.
    \end{cases}
\end{align}
Then we obtain for $k \geq 3$
\begin{equation*}
\begin{aligned}
    \;& \norm{\vv_k - \vv_{k - 1}}^2 \\
    = \;& \norm{\Big(\frac{\eta_{k - 1}}{k + 2} - \eta_k - \eta_{k - 1}\Big)\tF(\vv_{k - 1}) + \eta_{k - 1}\tF(\vv_{k - 2}) + \sum_{i = 0}^{k - 2}\frac{i + 2}{(k + 2)(k + 1)}\eta_i\tF(\vv_i)}^2 \\
    \leq \;& 3\Big(\frac{\eta_{k - 1}}{k + 2} - \eta_k - \eta_{k - 1}\Big)^2\norm{\tF(\vv_{k - 1})}^2 + 3\Big(\eta_{k - 1} + \frac{k\eta_{k - 2}}{(k + 1)(k + 2)}\Big)^2\norm{\tF(\vv_{k - 2})}^2 \\
    & + \sum_{i = 0}^{k - 3}\frac{3(i + 2)^2(k - 2)}{k^2(k + 1)^2}\eta_i^2\norm{\tF(\vv_i)}^2 \\
    \leq \;& \eta_{0}^2\Big(12\norm{\tF(\vv_{k - 1})}^2 + 5\norm{\tF(\vv_{k - 2})}^2\Big) + \sum_{i = 0}^{k - 3}\frac{3(i + 2)^2}{k(k + 1)^2}\eta_0^2\norm{\tF(\vv_i)}^2.
\end{aligned}
\end{equation*}
Taking expectation with respect to all randomness on both sides, we have 
\begin{equation*}\label{ineq:mono-euk-diff}
    \ee\norm{\vv_k - \vv_{k - 1}}^2 \leq \eta_{0}^2\Big(12\ee\norm{\tF(\vv_{k - 1})}^2 + 5\ee\norm{\tF(\vv_{k - 2})}^2\Big) + \sum_{i = 0}^{k - 3}\frac{3(i + 2)^2}{k(k + 1)^2}\eta_0^2\ee\norm{\tF(\vv_i)}^2.
\end{equation*}
Note that for $k \geq 1$, we have
\begin{equation*}
    \begin{aligned}
        \ee\norm{\tF(\vv_k)}^2 \leq \;& 2\ee\norm{F(\vu_{k + 1})}^2 + 4\ee\norm{F(\vu_{k + 1}) - F(\vv_k)}^2 + 4\ee\norm{\tF(\vv_k) - F(\vv_k)}^2 \\
        \overset{(\romannumeral1)}{\leq} \;& 2\ee\left(\norm{F(\vu_{k + 1})}^2 + 2L^2\norm{\vu_{k + 1} - \vv_k}^2\right) + 4\frac{\epsilon^2}{k} \\
        \overset{(\romannumeral2)}{\leq} \;& 2\left(\frac{\Lambda_0}{(k + 1)^2} + \Lambda_1\epsilon^2\right) + 4\frac{\epsilon^2}{k}, 
    \end{aligned}
\end{equation*}
where $(\romannumeral1)$ is due to the Lipschitz property and the variance bound, and we use the result of Theorem \ref{thm:mono-rate} for $(\romannumeral2)$.
Proceeding similarly, we have $\ee\norm{\tF(\vv_0)}^2 \leq  2\left(\Lambda_0 + \Lambda_1\epsilon^2\right) + \frac{\epsilon^2}{2}$, so we obtain for $k \geq 4$, 
\begin{equation*}
    \begin{aligned}
        \;& \ee\norm{\vv_k - \vv_{k - 1}}^2 \\
        \leq \;& \eta_{0}^2\Big(12\norm{\tF(\vv_{k - 1})}^2 + 5\norm{\tF(\vv_{k - 2})}^2\Big) + \sum_{i = 0}^{k - 3}\frac{3(i + 2)^2}{k(k + 1)^2}\eta_0^2\norm{\tF(\vv_i)}^2 \\
        \leq \;& \eta_{0}^2\left[24\left(\frac{\Lambda_0}{k^2} + \Lambda_1\epsilon^2 + 2\frac{\epsilon^2}{k - 1}\right) + 10\left(\frac{\Lambda_0}{(k - 1)^2} + \Lambda_1\epsilon^2 + 2\frac{\epsilon^2}{k - 2}\right)\right] \\
        & + \frac{12\eta_0^2}{k(k + 1)^2}\left[2\left(\Lambda_0 + \Lambda_1\epsilon^2\right) + \frac{\epsilon^2}{2}\right] + \sum_{i = 1}^{k - 3}\frac{6(i + 2)^2}{k(k + 1)^2}\eta_0^2\left(\frac{\Lambda_0}{(i + 1)^2} + \Lambda_1\epsilon^2 + 2\frac{\epsilon^2}{i}\right) \\
        \leq \;& 40\eta_0^2\frac{\Lambda_0}{(k - 1)^2} + 35\eta_0^2\left(\Lambda_1 + 1\right)\epsilon^2 + 6\eta_0^2\sum_{i = 1}^{k - 3}\left(\frac{4\Lambda_0}{k(k + 1)^2} + \frac{\Lambda_1\epsilon^2}{k} + \frac{2\epsilon^2}{k}\right) \\
        \leq \;& 40\eta_0^2\frac{\Lambda_0}{(k - 1)^2} + 35\eta_0^2\left(\Lambda_1 + 1\right)\epsilon^2 + 6\eta_0^2\left(\frac{4\Lambda_0}{(k + 1)^2} + \left(\Lambda_1 + 2\right)\epsilon^2\right) \\
        = \;& \eta_0^2\Lambda_0\Big(\frac{40}{(k - 1)^2} + \frac{24}{(k + 1)^2}\Big) + 41\eta_0^2\left(\Lambda_1 + 2\right)\epsilon^2.
    \end{aligned}
\end{equation*}
Since $p_k = \frac{2}{k + 1}$, we have for $k \geq 4$
\begin{equation*}
\begin{aligned}
    \frac{\ee\norm{\vv_k - \vv_{k - 1}}^2}{p_k^2} \leq \;& \frac{(k + 1)^2}{4}\left[\eta_0^2\Lambda_0\Big(\frac{40}{(k - 1)^2} + \frac{24}{(k + 1)^2}\Big) + 41\eta_0^2\left(\Lambda_1 + 2\right)\epsilon^2\right] \\
    \leq \;& 30\eta_0^2\Lambda_0 + 11\eta_0^2\left(\Lambda_1 + 2\right)\epsilon^2(k + 1)^2 \\
    \overset{(\romannumeral1)}{\leq} \;& 30\eta_0^2\Lambda_0 + 11\eta_0^2\left(\Lambda_1 + 2\right)\epsilon^2\frac{\Delta^2}{\epsilon^2} \\
    = \;& 30\eta_0^2\Lambda_0 + 11\eta_0^2\left(\Lambda_1 + 2\right)\Delta^2, 
\end{aligned}
\end{equation*}
where $(\romannumeral1)$ is due to $k \leq N < \frac{\Delta}{\epsilon}$.
So we obtain
\begin{equation*}
    \begin{aligned}
        \sum_{k = 4}^{N}\frac{(1 - p_k)\ee\big[\norm{\vv_k - \vv_{k - 1}}^2\big]}{p_k^2} \leq \;& \sum_{k = 1}^{N}\frac{\ee\big[\norm{\vv_k - \vv_{k - 1}}^2\big]}{p_k^2} \\
        \leq \;& \sum_{k = 1}^{N}\left[30\eta_0^2\Lambda_0 + 11\eta_0^2\left(\Lambda_1 + 2\right)\Delta^2\right] \\
        \leq \;& \frac{\Delta\left(30\eta_0^2\Lambda_0 + 11\eta_0^2\left(\Lambda_1 + 2\right)\Delta^2\right)}{\epsilon}.
    \end{aligned}
\end{equation*}
For $k = 3$, we have 
\begin{equation*}
    \begin{aligned}
        \frac{(1 - p_3)\ee\norm{\vv_3 - \vv_2}^2}{p_3^2} = \;& 2\ee\norm{\vv_3 - \vv_2}^2 \leq 2\Big(10\eta_0^2\Lambda_0 + 35\eta_0^2\left(\Lambda_1 + 2\right)\epsilon^2\Big).
    \end{aligned}
\end{equation*}
Moreover, we have 
\begin{equation*}
    \begin{aligned}
        \frac{(1 - p_2)\ee\norm{\vv_2 - \vv_1}^2}{p_2^2} = \;& \frac{3}{4}\ee\norm{\vv_2 - \vv_1}^2 \\
        \leq \;& \frac{3}{2}\ee\left[\Big(\frac{\eta_0}{6} + \eta_1\Big)^2\norm{\tF(\vu_0)}^2 + \Big(\frac{3\eta_1}{4} + \eta_2\Big)^2\norm{\tF(\vu_1)}^2\right] \\
        \leq \;& \frac{3}{2}\eta_0^2\left[\frac{49}{36}\left(2\Lambda_0 + 2\Lambda_1\epsilon^2 + \epsilon^2 / 2\right) + \frac{49}{16}\Big( \frac{\Lambda_0}{2} + 2\Lambda_1\epsilon^2 + 4\epsilon^2\Big)\right] \\
        \leq \;& 7\eta_0^2\Lambda_0 + 14\left(\Lambda_1 + 2\right)\epsilon^2.
    \end{aligned}
\end{equation*}
Note that $p_1 = 1$, so we have 
\begin{equation*}
    \begin{aligned}
        \ee\left[M\right] \leq \;& \frac{8\sigma^2\Delta}{\epsilon^3} + \frac{16\sigma^2}{\epsilon^2} + \frac{2\Delta}{\epsilon} + \frac{16L^2}{\epsilon^2}\sum_{k = 1}^{N}\frac{(1 - p_k)\ee\big[\norm{\vv_k - \vv_{k - 1}}^2\big]}{p_k^2} \\
        \leq \;& \frac{8\sigma^2\Delta}{\epsilon^3} + \frac{16\sigma^2}{\epsilon^2} + \frac{2\Delta}{\epsilon} + \frac{16L^2\Delta\left(24\eta_0^2\Lambda_0 + 5\eta_0^2\left(\Lambda_1 + 2\right)\Delta^2\right)}{\epsilon^3} \\
        & + \frac{32L^2\Big(10\eta_0^2\Lambda_0 + 35\eta_0^2\left(\Lambda_1 + 2\right)\epsilon^2\Big)}{\epsilon^2} + \frac{16L^2\Big(7\eta_0^2\Lambda_0 + 14\left(\Lambda_1 + 2\right)\epsilon^2\Big)}{\epsilon^2}\\
        = \;& \co\Big(\frac{\sigma^2L\norm{\vu_0 - \vu^*} + L^3\norm{\vu_0 - \vu^*}^3}{\epsilon^3}\Big), 
    \end{aligned}
\end{equation*}
where we assume without loss of generality that $L\norm{\vu_0 - \vu^*} \geq 1$, thus completing the proof.
\end{proof}

\section{Omitted proofs from Section~\ref{sec:str_monotone}}\label{appx:str_monotone}
\complexityStrong*
\begin{proof}
Let ${\cal G}_{k-1}$ be the natural filtration of all the random variables used up to (and including) the $(k-1)^\mathrm{th}$ outer loop. 
By Theorem \ref{thm:mono-rate}, we have 
\begin{equation}\label{ineq:rate-sharp}
    \ee\left[\norm{F(\vu_k)}^2 | {\cal G}_{k-1}\right] \leq \frac{\Lambda_0^{(k)}}{(K + 1)(K + 2)} + \Lambda_1^{(k)}\epsilon_k^2, 
\end{equation}
where 
$\Lambda_0^{(k)} =\frac{4(L^2\eta_0\underline{\eta} + 1)\norm{\vu_{k - 1} - \vu^*}^2}{\underline{\eta}^2}$ and $\Lambda_1^{(k)} = \frac{5\left(1 + M\underline{\eta}\eta_0\right)}{M\underline{\eta}^2}$. 

By the sharpness condition, we have 
\begin{equation*}
\begin{aligned}
  \norm{\vu_k - \vu^*}^2 \leq \;& \frac{1}{\mu}\innp{F(\vu_k) - F(\vu^*), \vu_k - \vu^*} \\
  \overset{(\romannumeral1)}{\leq}\;& \frac{1}{\mu}\innp{F(\vu_k), \vu_k - \vu^*} \\
  \overset{(\romannumeral2)}{\leq} \;& \frac{1}{\mu}\norm{F(\vu_k)}\norm{\vu_k - \vu^*}, 
\end{aligned}
\end{equation*}
where $(\romannumeral1)$ is because $\vu^*$ is a solution to \eqref{def:SVI}, and we use Cauchy-Schwarz inequality for $(\romannumeral2)$.
Taking expectation conditional on ${\cal G}_{k-1}$ on both sides, we have 
\begin{equation*}
    \ee\left[\norm{F(\vu_k)}^2 | {\cal G}_{k-1}\right] \geq \ee\left[\mu^2\norm{\vu_k - \vu^*}^2 | {\cal G}_{k-1}\right], 
\end{equation*}
which leads to 
\begin{equation*}
    \ee\left[\norm{\vu_k - \vu^*}^2 | {\cal G}_{k-1}\right] \leq \frac{1}{\mu^2}\left[\frac{\Lambda_0^{(k)}}{(K + 1)(K + 2)} + \Lambda_1^{(k)}\epsilon_k^2\right].
\end{equation*}
If we choose $K \geq \frac{4\sqrt{L^2\eta_0\underline{\eta} + 1}}{\mu\underline{\eta}}$, we have $\frac{\Lambda_0^{(k)}}{(K + 1)(K + 2)} \leq \frac{\mu^2\norm{\vu_{k - 1} - \vu^*}^2}{4}$. On the other hand, by our choice of $\epsilon_k$ in Algorithm~\ref{alg:sharp},  we obtain 
\begin{equation*}
    \Lambda_1^{(k)}\epsilon_k^2 \leq \frac{5\left(1 + M\underline{\eta}\eta_0\right)}{M\underline{\eta}^2} \frac{\mu^2\epsilon^2M\underline{\eta}^2}{20\left(1 + M\underline{\eta}\eta_0\right)} \leq \frac{\mu^2\epsilon^2}{4}.
\end{equation*}
So we have
\begin{equation*}
\ee\left[\norm{\vu_k - \vu^*}^2 | {\cal G}_{k-1}\right] \leq \frac{\norm{\vu_{k - 1} - \vu^*}^2}{4} + \frac{\epsilon^2}{4}.
\end{equation*}
Taking expectation with respect to all the randomness on both sides, we obtain 
\begin{equation}\label{ineq:recursive-sharp}
    \ee\big[\norm{\vu_k - \vu^*}^2\big] \leq \frac{\ee\big[\norm{\vu_{k - 1} - \vu^*}^2\big]}{4} + \frac{\epsilon^2}{4}.
\end{equation}
Recursively using Inequality~\eqref{ineq:recursive-sharp} till $k = 0$, we have 
\begin{equation*}
   \ee\big[\norm{\vu_k - \vu^*}^2\big] \leq \frac{1}{4^k}\norm{\vu_0 - \vu^*}^2 + \sum_{i = 1}^{k}\frac{\epsilon^2}{4^i} \leq  \frac{1}{4^k}\norm{\vu_0 - \vu^*}^2 + \sum_{i = 1}^{\infty}\frac{\epsilon^2}{4^i} \leq \frac{1}{4^k}\norm{\vu_0 - \vu^*}^2 + \frac{\epsilon^2}{3}.
\end{equation*}

Hence, after $\Big\lceil\log\Big(\frac{\sqrt{6}\norm{\vu_0 - \vu^*}}{2\epsilon}\Big)\Big\rceil$ outer loops, the  Algorithm~\ref{alg:sharp} can output a point $\vu_k$ such that $\ee\big[\norm{\vu_k - \vu^*}^2\big] \leq \epsilon^2$, as well as $\ee\big[\norm{F(\vu_{k})}^2\big] \leq L^2\epsilon^2$. More specifically, the total number of iterations such that the algorithm can return a point $\vu_k$ such that $\ee\norm{\vu_k - \vu^*}^2 \leq \epsilon^2$ will be 
\begin{equation*}
    \Big\lceil\log\Big(\frac{\sqrt{6}\norm{\vu_0 - \vu^*}}{2\epsilon}\Big)\Big\rceil\Big\lceil\frac{4\sqrt{L^2\eta_0\underline{\eta} + 1}}{\mu\underline{\eta}}\Big\rceil = \co\left(\frac{L}{\mu}\log\frac{\norm{\vu_0 - \vu^*}}{\epsilon}\right).
\end{equation*}

Next we come to bound the expected number of the stochastic oracle queries for each call to Algorithm $\mathcal{A}$. Denote $i$-th iterate in $k$-th call as $\vu_i^{(k)}$ and $\vv_i^{(k)}$, and let $K = \Big\lceil\frac{4\sqrt{L^2\eta_0\underline{\eta} + 1}}{\mu\underline{\eta}}\Big\rceil$, then proceeding as in the proof of Corollary~\ref{coro:mono-complexity}, we obtain  
\begin{equation}\label{ineq:strong-N}
\begin{aligned}
    \ee\big[M_k | {\cal G}_{k-1}\big] = \;& \ee\Big[\sum_{i = 0}^{K}m_i^{(k)} \Big| {\cal G}_{k-1} \Big] \\
    \leq \;& \frac{16\sigma^2}{\epsilon_k^2} + 2 + \sum_{i = 1}^{K} \Big(\frac{8\sigma^2}{\epsilon_k^2} + \frac{16(1 - p_i)L^2\ee\Big[\norm{\vv_i^{(k)} - \vv_{i - 1}^{(k)}}^2 \Big| {\cal G}_{k-1}  \Big]}{p_i^2\epsilon_k^2} + 2\Big) \\
    = \;& \frac{16\sigma^2}{\epsilon_k^2} + 2(K + 1) +  \frac{8\sigma^2K}{\epsilon_k^2} + \sum_{i = 1}^{K}\frac{16(1 - p_i)L^2\ee\Big[\norm{\vv_i^{(k)} - \vv_{i - 1}^{(k)}}^2 \Big| {\cal G}_{k-1}\Big]}{p_i^2\epsilon_k^2}, 
\end{aligned}
\end{equation}
where $M_k$ is the total number of queries at the $k^\mathrm{th}$ call.
Notice that $K = \Big\lceil\frac{4\sqrt{L^2\eta_0\underline{\eta} + 1}}{\mu\underline{\eta}}\Big\rceil = \co\left(
\frac{L}{\mu}\right)$ and $\epsilon_k^2 = \frac{\mu^2\epsilon^2M\underline{\eta}^2}{20\left(1 + M\underline{\eta}\eta_0\right)} = \co\left(\mu^2\epsilon^2\right)$, then it remains to bound $\frac{\ee\Big[\norm{\vv_i^{(k)} - \vv_{i - 1}^{(k)}}^2\Big]}{p_i^2}$ for $1 \leq i \leq K$. The proof of Lemma~\ref{coro:mono-complexity} shows that for $i \geq 4$  
\begin{equation*}
    \frac{\ee\Big[\norm{\vv_i^{(k)} - \vv_{i - 1}^{(k)}}^2 \Big| {\cal G}_{k-1}\Big]}{p_i^2} \leq 30\eta_0^2\Lambda_0^{(k)} + 11\eta_0^2\left(\Lambda_1^{(k)} + 2\right)\epsilon_k^2(i + 1)^2.
\end{equation*}
On the other hand, for $1 \leq i \leq 3$, we have $\sum_{i = 1}^{3}\frac{\ee\big[\norm{\vv_i^{(k)} - \vv_{i - 1}^{(k)}}^2\big]}{p_i^2} = o(\frac{L^2\norm{\vu_{k - 1} - \vu^*}^2}{\epsilon})$, we obtain 
\begin{equation*}
\begin{aligned}
    \sum_{i = 1}^{K}\frac{16(1 - p_i)L^2\ee\Big[\norm{\vv_i^{(k)} - \vv_{i - 1}^{(k)}}^2\Big| {\cal G}_{k-1}\Big]}{p_i^2\epsilon_k^2} =\;&  \co\Big(\frac{L^3\norm{\vu_{k - 1} - \vu^*}^2}{\mu^3\epsilon^2}\Big).
\end{aligned}
\end{equation*}
Combining last inequality with 
Inequality~\eqref{ineq:strong-N} and taking expectations on both sides, we obtain 
\begin{equation*}
    \ee[M_k] = \co\Big(\frac{\sigma^2(\mu + L) + L^3\ee\big[\norm{\vu_{k - 1} - \vu^*}^2\big]}{\mu^3\epsilon^2}\Big).
\end{equation*}
Telescoping from $k = 1$ to $N = \Big\lceil\log\Big(\frac{\sqrt{6}\norm{\vu_0 - \vu^*}}{2\epsilon}\Big)\Big\rceil$ and noticing that 
\begin{equation*}
    \ee\big[\norm{\vu_k - \vu^*}^2\big] \leq \frac{1}{4}\ee\big[\norm{\vu_{k - 1} - \vu^*}^2\big] + \frac{\epsilon^2}{4} \leq \frac{1}{4^k}\norm{\vu_0 - \vu^*}^2 + \frac{\epsilon^2}{3}, 
\end{equation*}
we have 
\begin{equation*}
    \sum_{k = 1}^{N}\ee\norm{\vu_{k - 1} - \vu^*}^2 \leq \norm{\vu_0 - \vu^*}^2\sum_{k = 1}^{\infty}\frac{1}{4^k} + \frac{N\epsilon^2}{3} \leq \frac{\norm{\vu_0 - \vu^*}^2}{3} + \frac{N\epsilon^2}{3}.
\end{equation*}
Hence, we finally arrive at 
\begin{equation*}
\begin{aligned}
    \ee\Big[\sum_{k = 1}^{N}M_k\Big] = \;& \co\Big(\frac{\sigma^2(\mu + L)\log(\norm{\vu_0 - \vu^*}/ \epsilon) + L^3\norm{\vu_0 - \vu^*}^2}{\mu^3\epsilon^2}\Big),
\end{aligned}
\end{equation*}
which completes the proof.
\end{proof}
\end{document}